\newcommand{\color}[2][{}]{}        
\renewcommand\mathcal\mathscr
\newcommand{\wt}{\widetilde}           
\newcommand{\mc}{\mathscr}
\newcommand{\ul}{\underline}
\numberwithin{equation}{section}
\theoremstyle{plain}            
\newtheorem{theorem}{Theorem}[section]
\newtheorem{proposition}[theorem]{Proposition}
\newtheorem{lemma}[theorem]{Lemma}
\newtheorem{corollary}[theorem]{Corollary}
\theoremstyle{definition}       
\newtheorem{definition}[theorem]{Definition}
\newtheorem{example}[theorem]{Example}
\theoremstyle{remark}
\newtheorem{remark}[theorem]{Remark}
\newcommand{\Sec}[1]{Section~\ref{sec:#1}}
\newcommand{\Fig}[1]{Figure~\ref{fig:#1}}
\newcommand{\Thm}[1]{Theorem~\ref{thm:#1}}
\newcommand{\Ex}[1]{Example~\ref{ex:#1}}
\newcommand{\Lem}[1]{Lemma~\ref{lem:#1}}
\newcommand{\Cor}[1]{Corollary~\ref{cor:#1}}
\newcommand{\Prp}[1]{Proposition~\ref{prp:#1}}
\newcommand{\Rem}[1]{Remark~\ref{rem:#1}}
\newcommand{\Def}[1]{Definition~\ref{def:#1}}
\newcommand{\R}{\mathbb{R}} 
\newcommand{\C}{\mathbb{C}} 
\newcommand{\N}{\mathbb{N}} 
\newcommand{\vx} {\mathsf{v}}  
\newcommand{\wx} {\mathsf{w}}  
\newcommand{\vxwx} {{\mathsf{vw}}}
\newcommand{\Vx} {\mathsf{V}}
\newcommand{\edge} {\mathsf{e}}
\newcommand{\Ed} {\mathsf{E}}
\newcommand{\vxeps}{{\eps,\vx}}
\newcommand{\Vxeps}{{\eps,\Vx}}
\newcommand{\wxeps}{{\eps,\wx}}
\newcommand{\vxwxeps}{{\eps,\vx\wx}}
\newcommand{\edeps}{{\eps,\edge}}
\newcommand{\Edeps}{{\eps,\Ed}}
\newcommand{\Edepsvx}{{\eps,\Ed_\vx}}
\newcommand{\vxed}{{\vx,\edge}}
\newcommand{\Vxnull}{{0,\Vx}}
\newcommand{\Ednull}{{0,\Ed}}
\newcommand{\e}{\mathrm e}  
\newcommand{\im}{\mathrm i} 
\newcommand{\eps}{\varepsilon}       
\renewcommand{\epsilon}{\varepsilon} 
\renewcommand{\phi}{\varphi}         
\DeclareMathOperator{\dd}    {d\!}      
\DeclareMathOperator{\dist}   {dist}
\DeclareMathOperator{\dom}    {dom}
\DeclareMathOperator{\id}     {id}  
\DeclareMathOperator{\volume}    {vol}
\DeclareMathOperator{\tr}     {tr}  
\DeclareMathOperator{\range}{Rg}
\DeclareMathOperator{\rank}{rk}
\renewcommand{\Re}     {\operatorname{Re}}
\renewcommand{\Im}     {\operatorname{Im}}
\newcommand{\de} {\mathord{\mathrm d}} 
\newcommand{\specsymb}{\sigma} 
\newcommand{\ressymb}{\varrho}
\newcommand{\spec}[2][{}]   {\specsymb_{\mathrm{#1}}(#2)}
\newcommand{\res}[1]{\ressymb(#1)}
 \newcommand{\Err}{\mathrm O}
\def\Xint#1{\mathchoice
   {\XXint\displaystyle\textstyle{#1}}%
   {\XXint\textstyle\scriptstyle{#1}}%
   {\XXint\scriptstyle\scriptscriptstyle{#1}}%
   {\XXint\scriptscriptstyle\scriptscriptstyle{#1}}%
   \!\int}
\def\XXint#1#2#3{{\setbox0=\hbox{$#1{#2#3}{\int}$}
     \vcenter{\hbox{$#2#3$}}\kern-.5\wd0}}
\newcommand{\dashint}{\Xint-}   
\newcommand{\avint}{{\textstyle\dashint}}   
\newcommand{\HS}{ H}           
\newcommand{\HSone}{ V}           
\newcommand{\Bsymb} {\mathscr L}       
\newcommand{\Sobsymb} {H}      
\newcommand{\Contsymb} {C}     
\newcommand{\Lsymb}    {L}     
\newcommand{\lsymb}    {\ell}          
\newcommand{\Lsqrsymb}    {\Lsymb^2}     
\newcommand{\lsqrsymb}    {\lsymb^2}          
\newcommand{\Lin}[1]{\mathscr \Bsymb({#1})}
\newcommand{\Cont}[2][{}]{\Contsymb^{#1}({#2})} 
\newcommand{\Lsqr}[1]{\Lsqrsymb({#1})} 
\newcommand{\lsqr}[1]{\lsqrsymb({#1})}   
\newcommand{\Linfty}[2][{}]{\Lsymb_\infty^{#1}({#2})} 
\newcommand{\Sob}[2][1]{\Sobsymb^{#1}({#2})}         
\newcommand{\abs}[1]{\lvert#1\rvert}   
\newcommand{\bigabs}[1]{\bigl\lvert#1\bigr\rvert}   
\newcommand{\Bigabs}[1]{\Bigl\lvert#1\Bigr\rvert}   
\newcommand{\abssqr}[2][{}]{\lvert{#2}\rvert^2_{#1}} 
\newcommand{\norm}[2][{}]{\|{#2}\|_{{#1}}}    
\newcommand{\normsqr}[2][{}]{\|{#2}\|^2_{#1}} 
\newcommand{\Bignorm}[2][{}]{\Bigl\|{#2}\Bigr\|_{#1}}     
\newcommand{\iprod}[3][{}]{\langle{#2}|{#3}\rangle_{#1}}  
\newcommand{\bigiprod}[3][{}]{\bigl\langle{#2}\bigl|\bigr.%
        {#3}\bigr\rangle_{#1}}
\newcommand{\set}[2]{\{ \, #1 \, : \, #2 \, \} } 
\newcommand{\bigset}[2]{\bigl\{ \, #1 \, : \, #2 \, \bigr\} }
\newcommand{\Bigset}[2]{\Bigl\{ \, #1 \, : \, #2 \, \Bigr\} }
\newcommand{\map}[3]{ #1 \colon #2 \longrightarrow #3}    
\newcommand{\bd}  {\partial}                
\newcommand{\dcup}{\mathbin{\mathaccent\cdot\cup}}
\DeclareMathOperator*{\bigdcup}{\mathaccent\cdot{\bigcup}}
\newcommand{\disjcup}{\mathrel{\dcup}} 
\newcommand{\bigdisjcup}{\operatorname*{\bigdcup}}
\newcommand{\restr}[1]{{\restriction}_{#1}} 
\newcommand{\conj}[1]{\overline {{#1}}}       
\newcommand{\normder}{\partial_\mathrm{n}}  
\newcommand{\1}{\mathbbm 1}            
\newcommand{\quadtext}[1]{\quad\text{#1}\quad}
\newcommand{\qquadtext}[1]{\qquad\text{#1}\qquad}
\newcommand{\Neu}{{\mathrm N}}              
\newcommand{\cvol}[1]{c_{\volume,#1}}   
\newcommand{\ctr}[1]{C^{\tr}_{#1}}   
\newcommand{\Jup}{J^{\uparrow\eps}}
\newcommand{\Jdown}{J^{\downarrow\eps}}
\newcommand{\Jnup}{J^{\uparrow n}}
\newcommand{\Jndown}{J^{\downarrow n}}
\begin{document}


\title[Norm convergence on varying Hilbert spaces]{Norm convergence of
  sectorial operators on varying Hilbert spaces}

\author{Delio Mugnolo}
\address{Institut f\"ur Analysis, Universit\"at
  Ulm, Helmholtzstra\ss e~18, 89069 Ulm, Germany}
  \email{delio.mugnolo@uni-ulm.de}


\author{Robin Nittka}

\address{Institut f\"ur Angewandte Analysis, Universit\"at
  Ulm, Helmholtzstra\ss e~18, 89069 Ulm, Germany}
  \email{robin nittka@uni-ulm.de}
  

\author{Olaf Post}
\address{School of Mathematics, Cardiff University, Senghennydd Road,
  Cardiff CF24 4AG, UK (on leave from: Department of Mathematical
  Sciences, Durham University, UK)}
   \email{olaf.post@durham.ac.uk}


\keywords{Diffusion equations on networks; Approximation schemes;
  Sectorial operators; Norm convergence of operators in different
  Hilbert spaces; Spectral convergence}

\subjclass[2010]{34B45, 35P05, 47D06}

\begin{abstract}
  Convergence of operators acting on a given Hilbert space is an old
  and well studied topic in operator theory.  The idea of introducing
  a related notion for operators acting on varying spaces is natural.
  Many previous contributions to this subject consider either concrete
  examples of perturbations, or an abstract setting where weak or
  strong convergence of the resolvents is used.  However, it seems
  that the first results on \emph{norm} resolvent convergence in this
  direction have been obtained only recently, to the best of our
  knowledge.  Here we consider sectorial operators on Hilbert spaces
  that depend on a parameter.  We define a notion of convergence that
  generalises convergence of the resolvents in operator norm to the
  case when the operators act on different spaces.  In addition, we
  show that this kind of convergence is compatible with the functional
  calculus of the operator and moreover implies convergence of the
  spectrum.  Finally, we present examples for which this convergence
  can be checked, including convergence of coefficients of parabolic
  problems.  Convergence of a manifold (roughly speaking consisting of
  thin tubes) towards the manifold's skeleton graph plays a prominent
  role, being our main application.
\end{abstract}

\maketitle

\section{Introduction}
\label{sec:intro}

Convergence of operators in the resolvent sense is a classical issue
in operator theory. Early results go back, at least implicitly, to
Rayleigh and Schr\"odinger. The first systematic investigations are
due to Trotter, Rellich and Kato.

If the operators under consideration arise from sesquilinear forms on
a Hilbert space, there are powerful methods available to study
convergence of the operators, in particular in the self-adjoint case.
In Kato's classical monograph~\cite{Kato95} 
one finds a detailed study of various kinds of convergence with focus
on strong and norm convergence in the resolvent sense and the
consequences of the respective convergence for the behaviour of the
spectrum.  Moreover, Kato gives criteria in terms of the forms that
allow to check easily in many situations that a sequence of operators
arising from uniformly sectorial forms converge either strongly or in
norm.  Those criteria are particularly easy to verify if the forms
satisfy some monotonicity assumptions, i.e., they converge from above
or from below.

A similar, very successful approach has been developed by
Mosco~\cite{Mos67} in the context of symmetric Dirichlet forms, i.e.,
forms associated with sub-Markovian self-adjoint
$\mathrm{C}_0$-semigroups.  He succeeds in obtaining \emph{strong}
resolvent convergence, spectral convergence and convergence of the
generated semigroups from simple conditions on the forms, and in fact
resolvent convergence can be easily characterised via the forms.

In the context of homogenisation problems, convergence results for
operators acting in different spaces have been considered e.g.\ in
\cite{sanchez-palencia:80, osy:92, koz:94, melnyk:01, melnyk:03,
  pastukhova-zhikov:07} on an abstract level and in concrete examples
like $\Lsqr{\Omega,\mu_\eps}$ with $\eps$-depending measures
$\mu_\eps$ converging weakly to a measure $\mu_0$, or even with
changing domains.  In the case of homogenisation problems on varying
domains, the identification operators often consists of restrictions
and extensions of functions, and the latter operator is not always
bounded on the form domains~(see e.g.~\cite{melnyk:00}).  Note that
these results imply strong or weak convergence of the resolvents, and
imply convergence of the discrete spectrum.  Moreover, the strong
convergence of the corresponding semigroups follows,
see~\cite{pastukhova-zhikov:07} and references therein.  On the other
hand, these methods can also be extended to certain nonlinear
settings, cf.~\cite{melnyk:07}.

On the other hand, a natural approach to infinite dimensional problems
is based on approximation via finite dimensional spaces, see
e.g.~\cite{ito-kap:02}. If in particular one considers diffusion-like
processes, form methods are a mighty tool.  Convergence schemes for
Dirichlet forms on varying spaces of finite dimension have been
considered by Mosco and others, particularly in the context of
stochastic diffusion equations and diffusion on fractals, see
e.g.~\cite{kolesnikov:06,freiberg:08,hinz:09,andres:09,mosco:09}.
There are similar convergence results for manifolds, metric measure
spaces, Hilbert spaces, quadratic forms on different Hilbert spaces
in~\cite{kuwae-shioya:03,kasue:02,kasue:06}.  Though, in these works
only the \emph{strong} convergence of the associated operators is
considered.

Moreover, elliptic equations on varying domains with respect to
several boundary conditions on several spaces have been widely
studied.  We refer to the work of Stollmann~\cite{stollmann:95} on
strong and norm resolvent convergence of Dirichlet Laplacians on
varying domains (see also \cite{weidmann:84} for the strong resolvent
convergence), as well as the works of Stolz and Weidmann about the
approximation of singular Sturm-Liouville operators by regular ones;
using again a domain change, see e.g.~\cite{stolz-weidmann:95}.
Finally, we refer to Daners' survey article~\cite{Dan08} for more
results on problems in the spirit of form methods and further
references.

In this article, in contrast, we are interested in convergence
properties \emph{in operator norm} of operators associated with forms
that act on varying Hilbert spaces, where the identification operators
are not necessarily given in a canonical way.  Although our setting is
more restrictive than e.g.\ the strong convergence one used in
homogenisation problems, there is still a wide class of examples in
which the necessary assumptions are naturally fulfilled.  We would
like to stress that the convergence in operator norm of the resolvents
in~\cite{stollmann:95} uses the fact that all spaces are canonically
embedded in a common space $\Lsqr{\R^d}$, which is not necessarily
true in our situation.

Let us now describe the results of this article in more details.  We
investigate convergence of m-sectorial operators $A_\eps$, which are
allowed to act on different Hilbert spaces $H_\eps$, towards an
m-sectorial operator $A_0$ acting on a Hilbert space $H_0$ by form
methods. Our notion of form convergence resembles a sufficient
condition for convergence of the resolvent in operator norm due to
Kato and is designed in a way that allows to check the conditions
easily in many applications.  The notation is introduced in
\Sec{general}.  Our main abstract results are contained in
\Sec{abstract}.  More precisely, in \Sec{functional_calc} we show that
if $A_\eps$ converges to $A_0$, then also $\phi(A_\eps)$ converges to
$\phi(A_0)$ in norm if $\phi$ is in a suitable class of bounded
holomorphic functions (\Thm{hol.calc2}). We prove in
\Sec{spectral_conv} that the spectra of $A_\eps$ converge to the
spectrum of $A_0$ (\Cor{conn_spec} and \Thm{eigenvalue_convergence}).
Similar results for self-adjoint operators can be found
in~\cite{post:06}.  In~\cite{post:pre11}, also convergence of certain
non-self-adjoint operators in a specific situation is considered. In
\Sec{extra} we consider invariance of subsets of the Hilbert spaces
and extrapolated semigroups.  In particular, if we assume that the
Hilbert spaces $H_\eps$ are $L^2$-spaces and the semigroups
$(\e^{tA_\eps})_{t \ge 0}$ generated by the $A_\eps$ are bounded on
the corresponding $L^\infty$-spaces, then we can prove that under
suitable assumptions on the convergence scheme the semigroups
$\e^{tA_\eps}$ converge to $\e^{tA_0}$ also as operators on $L^p$ for
$p \in [2,\infty)$ (\Thm{Lpconv}).

\Sec{simple_example} describes several situations to which our results
can be applied without much effort.  In \Sec{fourier} we put the
Fourier series expansion with respect to eigenvectors into our
framework to exhibit the ideas at an elementary example.  In
\Sec{coeff_conv} we apply our results in a situation where $A_\eps$
are elliptic operators on a domain whose coefficients converge to the
coefficients of an elliptic operator $A_0$. More precisely, we
consider generalised Wentzell-Robin boundary conditions, which are a
natural candidate for our framework because the natural choice of
inner products on the underlying Hilbert space depends on the
coefficients even if the Hilbert spaces coincide as sets.  In this
setting we generalise results of Coclite et al.~\cite{coc-fav:08} and
complement those of~\cite{coc-gol:08} (\Thm{var.coeff}).  In
\Sec{degenerate} we adopt a variational approach to elliptic operators
whose coefficient may vanish at the boundary, as in~\cite{AC08}
(\Thm{deg.eq}). Observe that in this situation the limiting Hilbert
space differs from the approximating ones --- not only with respect to
the inner product, but even as a set ---, so that Kato's classical
results cannot be applied directly.

Our main example, however, is the convergence of tube-like manifolds
to the skeleton graph, which we investigate in
Section~\ref{sec:robin}.  More precisely, we let $H_\eps =
\Lsqr{X_\eps}$ for $\eps>0$, where $X_\eps$ is a manifold consisting
of $(m+1)$-dimensional objects resembling tubes (\emph{edge
  neighbourhoods}) that are connected in $(m+1)$-dimensional junction
regions (\emph{vertex neighbourhoods}). If these tubes have a uniform
thickness $\eps$, then it is natural to expect that the behaviour of
physical processes on $X_\eps$ which are described by an elliptic
operator is close to the behaviour of an analogous process on the
\emph{skeleton graph} $X_0$, which is a $1$-dimensional manifold with
singularities at the vertices.  We show that under some uniformity
assumptions we indeed have resolvent convergence and convergence of
finite parts of the spectrum (\Thm{conv.mfd}).  Note that the
convergence results for manifolds and metric measure spaces of Kasue
et al.~\cite{kuwae-shioya:03,kasue:02,kasue:06} cannot be used here,
since our families of manifolds $(X_\eps)_\eps$ do not satisfy the
necessary curvature bounds (see e.g.~\cite{kasue:02}, p.~1224).

Robin boundary conditions are closely related to Neumann boundary
conditions from the perspective of the quadratic (or, more generally,
sesquilinear) form approach. In fact, the form domain is the same,
while the forms differs only by a (possibly non-symmetric) boundary
term. This allows us to rely on the results in~\cite{post:06} for
treating the principal term, so that we only have to handle the
boundary term.

One of our motivations for this example is given by the articles of
Grieser~\cite{grieser:pre07} and Cacciapuoti--Finco~\cite{CF08}.
Grieser considered general boundary conditions (Dirichlet, Robin or
Neumann) on a manifold (if embedded, the embedding is ``straight'')
shrinking to a metric graph.  He showed that the limit behaviour
depends on the scattering matrix at the threshold of the essential
spectrum, so that, generically, the limit operator is decoupling.
Cacciapuoti and Finco use a simple wave-guide model (in our
terminology, a flat manifold converging to a graph consisting of two
(half-infinite) edges and one vertex only).  Using curved embedded
edges with different scalings of the transversal and longitudinal
curvature, they obtain non-trivial couplings starting with Robin
boundary conditions.  However, their notion of convergence differs
significantly from ours since one can use separation of variables due
to the simple product topology of the space.  For the convergence of
unitary groups in a similar setting we refer to the work of Teufel and
Wachsmuth~\cite{teufel-wachsmuth:pre09}.

Grieser and Cacciapuoti-Finco use scale-invariant Robin boundary
conditions of the form $\frac{\partial u}{\partial \nu} = \beta_\eps
u$ with $\beta_\eps = \beta/\eps$. This scaling leads to transversal
eigenvalues of the order $\eps^{-2}$.  In particular, a rescaling of
the limit operator is necessary in order to expect convergence, see
\Rem{scaling}. Using Robin boundary
conditions with coupling of order $\beta_\eps= \Err(1)$ near the
vertices and $\beta_\eps=\Err(\eps^{3/2})$ along the edge
neighbourhoods, we are able to construct a family of manifolds with
boundary, such that, in the limit, the corresponding Laplacians
converge to a Laplacian on the underlying metric graph with
generalised, possibly non-local $\delta$-interactions in the vertices.
Using the same idea as in~\cite{exner-post:09}, we can further
approximate other couplings like the $\delta$'-interaction.

\subsection*{Acknowledgements}
This article has been written while the third author was visiting the
University of Ulm. He would like to thank the University of Ulm for
the hospitality and the financial support.

\section{Notation}
\label{sec:general}

We consider m-sectorial operators (in the sense of Kato) on Hilbert
spaces.  For our approach, it is convenient to work with such an
operator in terms of its associated form.  We briefly sketch the
correspondence of m-sectorial operators and sesquilinear forms.  For
these results and much more information we refer to~\cite{Kato95},
Chapter~VI.  We point out that there is a one-to-one correspondence
between bounded, $H$-elliptic forms and m-sectorial operators, so
there is no loss of generality in working with an m-sectorial operator
only in terms of its form.

Let $H$ be a Hilbert space and let $V$ be a dense subspace of $H$ that
is a Hilbert space in its own right, and which is continuously
embedded into $H$.  We say that a sesquilinear form $a\colon V \times
V \to \C$ is \emph{bounded} if there exists $M \ge 0$ such that
\begin{equation}\label{eq:form_bdd}
  \abs{a(u,v)} \le M \norm[V]{u} \norm[V]{v}
  \qquadtext{for all $u, v \in V$,}
\end{equation}
and we call $a$ \emph{$H$-elliptic} or simply \emph{elliptic} if there exist $\omega \in \R$
and $\alpha > 0$ such that
\begin{equation}\label{eq:form_coercive}
  \Re a(u,u) + \omega \normsqr[H]{u} \ge \alpha \normsqr[V]{u}
  \qquadtext{for all $u \in V$.}
\end{equation}
In this case
\[
	\norm[a]{u} \coloneqq \sqrt{ \Re a(u,u) + \omega \normsqr[H]{u} }
\]
defines an equivalent norm on $V$. More precisely, since $V$ is
continuously embedded into $H$, there exists $c \ge 0$ such that
\begin{equation}
  \label{eq:form_embedding}
  \norm[H]{u} \le c_V \norm[V]{u} \qquadtext{ for all $u \in V$.}
\end{equation}
For any such constant $c_V$, we obtain
\begin{equation}\label{eq:equiv_norm}
  \alpha \normsqr[V]{u} \le \normsqr[a]{u} \le \bigl(M + c_V^2
  \omega\bigr) \normsqr[V]{u} 
  \qquadtext{for all $u \in V$.}
\end{equation}

We define the \emph{associated operator} $A$ of $a$ by
\[
	u \in D(A) \text{ and } Au = f
	\quad :\Longleftrightarrow \quad
	u \in V \text{ and } a(u,v) = \iprod[H]{f}{v} \; \forall v \in V,
\]
and we emphasise that since the form $a$ is not assumed to be
symmetric, the associated operator $A$ is in general not self-adjoint.

Consider for a moment the form $b\colon V \times V \to \C$ given by
\[
	b(u,v) \coloneqq a(u,v) + \omega \iprod[H]{u}{v},
\]
which is associated with the operator $A+\omega$.
Then by~\eqref{eq:form_bdd} and~\eqref{eq:equiv_norm}
\[
	\abs{\Im b(u,u)}
		= \abs{\Im a(u,u)}
		\le \abs{a(u,u)}
		\le M \normsqr[V]{u}
		\le \frac{M}{\alpha} \normsqr[a]{u}
		= \frac{M}{\alpha} b(u,u).
\]
The proof of~Theorem~1.53 of \cite{Ouh05} now shows that
$\spec{A+\omega} \subset \overline \Sigma _{\arctan \frac M \alpha}$,
where
\begin{equation}
  \label{eq:def.sec}
    \Sigma_\theta := \bigset {z \in \C \setminus \{0\}} { \abs{\arg(z)} <
    \theta}.
\end{equation}
Moreover, denoting here and in the following
\[
R(z,A)\coloneqq (z - A)^{-1},
\]
\newcommand{\thetainterval}{(\arctan{\tfrac{M}{\alpha}}, \pi]}
for every $\theta \in \thetainterval$ we have
\[
	\norm[\mathscr{L}(H)]{zR(z,A+\omega)} \le D_\theta
	\quadtext{for all $z \not\in \Sigma_\theta$,} 
\]
i.e., $\spec{A} \subset -\omega + \Sigma_{\arctan\frac{M}{\alpha}}$ and
\begin{equation}\label{eq:resolvent_est}
	\norm[\mathscr{L}(H)]{R(z,A)} \le
	\frac{D_\theta}{\abs{z+\omega}}
	\quadtext{for all $z \not\in \Sigma_\theta - \omega$} 
\end{equation}
with
\[
	D_\theta \coloneqq \frac{1}{\sin(\theta - \arctan\tfrac{M}{\alpha})}.
\]
Operators satisfying such a condition are frequently called \emph{m-sectorial}
(in the sense of Kato).

\begin{definition}\label{def:equi_sec}
  Let $(H_\eps)_{\eps \ge 0}$ be a family of Hilbert spaces.  We say
  that $(a_\eps)_{\eps \ge 0}$ is an \emph{equi-elliptic family of
    sesquilinear forms with form domains $(V_\eps)_{\eps \ge 0}$}, if
  there exist $M$, $\omega$, $\alpha$ and $c_V$ not depending on
  $\eps$ such that~\eqref{eq:form_bdd}, \eqref{eq:form_coercive}
  and~\eqref{eq:form_embedding} are satisfied for all $\eps \ge 0$,
  i.e., all the constants are uniform with respect to $\eps$.  We call
  $\omega$ the associated \emph{vertex} and $\arctan (M/\alpha)$ the
  associated \emph{semi-angle}.
\end{definition}

\begin{remark}\label{rem:equiv_norm}
  If $(a_\eps)_{\eps \ge 0}$ is an equi-elliptic family of
  sesquilinear forms, then by~\eqref{eq:equiv_norm} the norms
  $\norm[V_\eps]{\cdot}$ and $\norm[a_\eps]{\cdot}$ are equivalent
  with a uniform constant. This allows us to use either of these two
  norm interchangeably in the following. For the theoretical part, the
  form norm is more convenient.  But for applications, we usually
  prefer to equip $V_\eps$ with other norms that are easier to handle.
\end{remark}

Now let $(a_\eps)_{\eps \ge 0}$ be a family of equi-elliptic family
of sesquilinear forms on Hilbert spaces $(H_\eps)_{\eps \ge 0}$.  We
want to ``measure'' the distance between the associated operators
$(A_\eps)_{\eps > 0}$ and $A_0$.  For this, we introduce
\emph{identification operators} $\map \Jup {H_0} {H_\eps}$ and $\map
\Jdown {H_\eps} {H_0}$, $\eps > 0$, which are considered to be
``almost unitary'', i.e., unitary up to some error.  For technical
reasons, it is also convenient to introduce identification operators
$\map {\Jup_1} {V_0} {V_\eps}$ and $\map{\Jdown_1} {V_\eps} {V_0}$ for
the form domains, which are considered to be ``almost the
restrictions'' of $\Jup$ and $\Jdown$ to $V_0$ and $V_\eps$,
respectively.

We make this more explicit and use the following terminology, inspired
by the technique developed in Appendix~A of~\cite{post:06}
and~\cite{post:pre11}.

\begin{definition}
  \label{def:quasi-uni}
  Let $\eps > 0$, and let $a_0$ and $a_\eps$ be bounded, elliptic,
  sesquilinear forms on Hilbert spaces $H_0$ and $H_\eps$ with form
  domains $V_0$ and $V_\eps$. Denote the associated operators by $A_0$
  and $A_\eps$, respectively.  For parameters $\delta_\eps > 0$ and
  $\kappa \ge 1$ we say that $a_0$ and $a_\eps$ are
  \emph{$\delta_\eps$-$\kappa$-quasi-unitarily equivalent} if there
  exist bounded operators $\Jup \in \mathscr{L}(H_0,H_\eps)$, $\Jdown
  \in \mathscr{L}(H_\eps,H_0)$, $\Jup_1 \in \mathscr{L}(V_0,V_\eps)$
  and $\Jdown_1 \in \mathscr{L}(V_\eps,V_0)$ that satisfy the
  following conditions.
  \begin{subequations}
    \label{eq:quasi-uni}
    \begin{gather}
      \label{A1}
      \norm[\mathscr{L}(V_0,H_\eps)]{\Jup - \Jup_1} \le \delta_\eps
      \quadtext{and} \norm[\mathscr{L}(V_\eps,H_0)]{\Jdown - \Jdown_1}
      \le
      \delta_\eps; \\
      \label{A2}
      \norm[\mathscr{L}(H_\eps,H_0)]{\Jdown - (\Jup)^\ast} \le
      \delta_\eps \quadtext{and}
      \norm[\mathscr{L}(H_0,H_\eps)]{\Jup - (\Jdown)^\ast} \le \delta_\eps; \\
	\label{A3}
        \norm[\mathscr{L}(V_0,H_0)]{\id - \Jdown \Jup} \le \delta_\eps
        \quadtext{and}
        \norm[\mathscr{L}(V_\eps,H_\eps)]{\id - \Jup \Jdown} \le
        \delta_\eps; \\
	\label{A4}
		\norm[\mathscr{L}(H_0,H_\eps)]{\Jup} \le \kappa
	\quadtext{and}
		\norm[\mathscr{L}(H_\eps,H_0)]{\Jdown} \le \kappa; \\
	\label{A5}
		\bigabs{a_0(f, \Jdown_1 u) - a_\eps(\Jup_1 f, u)} \leq \delta_\eps \norm[V_0]{f} \norm[V_\eps]{u}
		\quad\text{for all $f \in V_0$ and $u \in V_\eps$}.
    \end{gather}
  \end{subequations}
  If $(a_\eps)_{\eps \in \ge 0}$ is an equi-elliptic family of
  sesquilinear forms and if there exists $\kappa \ge 1$ and a family
  $(\delta_\eps)_{\eps > 0}$ of positive real numbers with $\lim_{\eps
    \to 0} \delta_\eps \to 0$ such that $a_\eps$ is
  $\delta_\eps$-$\kappa$-quasi-unitarily equivalent to $a_0$, then we
  say that the family $(a_\eps)_{\eps > 0}$ \emph{converges to $a_0$
    (in norm)}
  as $\eps \to 0$.
\end{definition}

\begin{remark}\label{rem:quasirem}\mbox{}
  \begin{enumerate}[(i)]
  \item For $\delta_\eps = 0$ the associated operators $A_0$ and
    $A_\eps$ are unitarily equivalent.  In fact, if $\delta_\eps = 0$,
    conditions~\eqref{A2} and~\eqref{A3} states that $\Jup$ is unitary
    with inverse $\Jdown$. Since by~\eqref{A1} the operators $\Jup_1$
    and $\Jdown_1$ are the restrictions of $\Jup$ and $\Jdown$,
    condition~\eqref{A5} states that $\Jup$ realises the unitary
    equivalence of $A_0$ and $A_\eps$.
  \item In the applications we have in mind, it is easy to check that
    $\map {\Jup_1}{V_0}{V_\eps}$ and $\map {\Jdown_1}{V_\eps}{V_0}$
    are bounded: if $\Jup_1$ is bounded as an operator into $H_\eps$
    and takes values in $V_\eps$, then it is bounded as an operator
    into $V_\eps$ by the closed graph theorem, and an analogous
    argument applies to $\Jdown_1$.  Note that in the context of
    homogenisation problems, the boundedness of $\Jdown_1$ is not
    always assured (see e.g.~\cite{melnyk:00}).

  \item The two conditions in~\eqref{A2} are equivalent to each other.
    In fact, they can be rephrased as
    \begin{equation}
      \label{eq:adjoint_scalar}
      \tag{\ref{A2}'}
      \abs{\iprod[H_\eps]{\Jup f}{u} - \iprod[H_0]{f}{\Jdown u} }
      \le \delta_\eps \norm[H_0]{f} \norm[H_\eps]{u}
      \text{ for all } f \in H_0 \text{ and } u \in H_\eps.
    \end{equation}
  \item Condition~\eqref{A3} does \emph{not} imply that $\Jdown \Jup$
    or $\Jup \Jdown$ are invertible operators. In fact, in most of our
    examples one of the two operators will have a large kernel,
    whereas the other has a small range.
  \item Only~\eqref{A5} depends on the evolution processes acting on
    $H_\eps$, while the first four conditions are solely related to
    the function spaces. So if we have verified~\eqref{A1}--\eqref{A4}
    in one situation, those conditions are satisfied for a large class
    of examples.  To be more specific, we can reuse the results
    obtained in~\cite{post:06} for Neumann boundary conditions and do
    not have to check these four conditions once again for the
    discussion of the Laplace operator with Robin boundary conditions
    in Section~\ref{sec:robin}.
  \end{enumerate}
\end{remark}

\begin{example}
  \label{ex:classical}
  Let $a_0$ and $a_\eps$ be forms on a single Hilbert space $H$ with
  equal form domain $V$ and let $\Jup$, $\Jup_1$, $\Jdown$ and
  $\Jdown_1$ be the identity on $H$ resp.\ $V$. Then the conditions of
  Definition~\ref{def:quasi-uni} are satisfied if and only if
  \begin{equation}\label{eq:form_Jid}
    \abs{a_0(u,v) - a_\eps(u,v)} \le \delta_\eps \norm[V]{u} \norm[V]{v}
  \end{equation}
  for all $u,v \in V$, i.e., $\norm{a_0-a_\eps} \to 0$ in the operator
  norm on the space of sesquilinear forms on $V$.
\end{example}

In the setting of \Ex{classical}, if~\eqref{eq:form_Jid} is satisfied
for a family $(\delta_\eps)_{\eps > 0}$ satisfying $\lim_{\eps \to 0}
\delta_\eps = 0$, then the resolvent of $A_\eps$ converges to the
resolvent of $A_0$ in operator norm uniformly on compact subsets of
$\res{A_0}$.  In fact, it would suffice if~\eqref{eq:form_Jid} is
satisfied whenever $u = v$, see Theorem~VI.3.6 of~\cite{Kato95}.  In
this sense, our results are a generalisation of this classical result
to the setting of varying spaces.  We can also deduce similar
consequences like in the classical situation, e.g.\ convergence of the
spectra.

\section{Abstract results}\label{sec:abstract}

For the whole section, let $(a_\eps)_{\eps \ge 0}$ be an
equi-elliptic family of sesquilinear forms for constants $M$,
$\omega$, $\alpha$ and $c_V$ as in~\eqref{eq:form_bdd},
\eqref{eq:form_coercive} and~\eqref{eq:form_embedding}, and let
$(A_\eps)_{\eps \ge 0}$ denote the associated operators.  We always
let the operators $\Jup$, $\Jdown$, $\Jup_1$ and $\Jdown_1$ and the
constant $\kappa$ be as in Definition~\ref{def:quasi-uni}.

\subsection{Functional calculus}
\label{sec:functional_calc}

In our situation, each operator $A_\eps + \omega$ is invertible by the
Lax-Milgram theorem due to~\eqref{eq:form_coercive}.  It is known that
in this situation the operators $A_\eps + \omega$ have bounded
$H^\infty$-calculus, see Sec.~11 of~\cite{kunwei:04}, Sec.~5.2
of~\cite{Are04} or Sec.~7.3 of~\cite{haase:06}.  We are going to show
that if $(a_\eps)_\eps$ converges to $a_0$ in the sense of
\Def{quasi-uni}, then the operators $\phi(A_\eps)$ converge to
$\phi(A_0)$ in a suitable sense as $\eps \to 0$ for an admissible
holomorphic function $\phi$.

We start with some auxiliary estimates. For brevity, in the proofs we
write
\[
R_\eps(z) \coloneqq R(z,A_\eps) = (z - A_\eps)^{-1}.
\]

\begin{lemma}
  \label{lem:rest}
  Let $\theta \in \thetainterval$.  There exists $C_\theta \ge 0$ such
  that for all $\eps \ge 0$ and $z \not\in \Sigma_\theta - \omega$
  \begin{equation*}
    \norm[\mathscr{L}(H_\eps,V_\eps)]{R(z,A_\eps)} \le
    \frac{C_\theta}{\sqrt{\abs{z+\omega}}}
    \quad\text{and}\quad
    \norm[\mathscr{L}(H_\eps,V_\eps)]{R(z,A_\eps)^\ast} \le
    \frac{C_\theta}{\sqrt{\abs{z+\omega}}}.
	\end{equation*}
\end{lemma}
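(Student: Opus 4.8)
The plan is to deduce the asserted bound in $\mathscr{L}(H_\eps,V_\eps)$ from the $H_\eps$-resolvent bound~\eqref{eq:resolvent_est}, using that the form norm $\norm[a_\eps]{\cdot}$ controls $\norm[V_\eps]{\cdot}$ uniformly in $\eps$ via~\eqref{eq:equiv_norm}. I will first establish the estimate for $R(z,A_\eps)$ and then obtain the one for $R(z,A_\eps)^\ast$ by passing to the adjoint form, the point being that the adjoint form carries exactly the same ellipticity data. Throughout, fix $\theta \in \thetainterval$, $\eps \ge 0$ and $z \notin \Sigma_\theta - \omega$; then $z \in \res{A_\eps}$ because $\spec{A_\eps} \subset -\omega + \Sigma_{\arctan\frac{M}{\alpha}} \subset \Sigma_\theta - \omega$, and we may assume $z \ne -\omega$ since the bound is otherwise vacuous.

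For the first inequality, given $f \in H_\eps$ I would set $u \coloneqq R(z,A_\eps)f$. Since $A_\eps u = zu - f$, the definition of the associated operator gives $u \in V_\eps$ with $a_\eps(u,v) = \iprod[H_\eps]{zu-f}{v}$ for all $v \in V_\eps$. Testing with $v = u$, taking real parts and adding $\omega\normsqr[H_\eps]{u}$ yields the identity
\[
  \normsqr[a_\eps]{u} = \Re(z+\omega)\normsqr[H_\eps]{u} - \Re\iprod[H_\eps]{f}{u},
\]
which is legitimate because $\omega$ is real. From here the estimate is purely computational: bound $\Re(z+\omega) \le \abs{z+\omega}$, apply the Cauchy--Schwarz inequality to the last term, and insert $\norm[H_\eps]{u} \le D_\theta\abs{z+\omega}^{-1}\norm[H_\eps]{f}$ from~\eqref{eq:resolvent_est} --- with $D_\theta$ independent of $\eps$ by equi-ellipticity --- to get $\normsqr[a_\eps]{u} \le D_\theta(D_\theta+1)\abs{z+\omega}^{-1}\normsqr[H_\eps]{f}$. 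Combining this with $\alpha\normsqr[V_\eps]{u} \le \normsqr[a_\eps]{u}$ from~\eqref{eq:equiv_norm} gives the first claim, with $C_\theta \coloneqq \sqrt{D_\theta(D_\theta+1)/\alpha}$ depending only on $\theta$, $M$, $\alpha$, $\omega$. Note that this reasoning used only the boundedness and ellipticity constants of the form, hence applies to any bounded elliptic form on any pair $V \subset H$ sharing those constants.

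For the second inequality I would pass to the adjoint form $a_\eps^\ast(u,v) \coloneqq \overline{a_\eps(v,u)}$, whose associated operator is $A_\eps^\ast$ (see~\cite{Kato95}, Ch.~VI). Since $\Re a_\eps^\ast(u,u) = \Re a_\eps(u,u)$ and $\abs{a_\eps^\ast(u,v)} = \abs{a_\eps(v,u)}$, the form $a_\eps^\ast$ is bounded and elliptic with the very same constants $M$, $\omega$, $\alpha$, $c_V$ --- in particular with the same semi-angle $\arctan\frac{M}{\alpha}$ and the same $D_\theta$ --- while $\Sigma_\theta - \omega$ is invariant under complex conjugation because $\omega \in \R$, so that $\bar z \notin \Sigma_\theta - \omega$. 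Applying the estimate of the previous paragraph to $a_\eps^\ast$ at the spectral parameter $\bar z$, and using $R(z,A_\eps)^\ast = R(\bar z, A_\eps^\ast)$ together with $\abs{\bar z+\omega} = \abs{z+\omega}$, gives $\norm[\mathscr{L}(H_\eps,V_\eps)]{R(z,A_\eps)^\ast} \le C_\theta\abs{z+\omega}^{-1/2}$ with the same $C_\theta$.

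I do not expect a genuine obstacle: the middle computation is routine. The points deserving care are (a) using the form norm rather than the $H_\eps$-norm in order to land in $V_\eps$, and (b) the observation that replacing a form by its adjoint changes neither the real parts nor therefore the ellipticity constants, so that the bound for $R(z,A_\eps)^\ast$ follows from the one for $R(z,A_\eps)$ at no extra cost.
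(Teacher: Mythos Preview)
Your proof is correct and follows essentially the same approach as the paper: both compute the form norm of the resolvent via the identity $\normsqr[a_\eps]{R(z,A_\eps)f} = \Re(z+\omega)\normsqr[H_\eps]{R(z,A_\eps)f} - \Re\iprod[H_\eps]{f}{R(z,A_\eps)f}$, insert the resolvent bound~\eqref{eq:resolvent_est}, and pass to $V_\eps$ via~\eqref{eq:equiv_norm}, arriving at the identical constant $C_\theta = \sqrt{D_\theta(1+D_\theta)/\alpha}$. The treatment of $R(z,A_\eps)^\ast$ via the adjoint form $a_\eps^\ast$ is likewise the same as in the paper.
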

\begin{proof}
  Let $u \in H_\eps$ be fixed.
  Then by~\eqref{eq:equiv_norm}
  \begin{align*}
    \alpha \normsqr[V_\eps]{R_\eps(z)u}
    & \le \normsqr[a_\eps]{R_\eps(z)u}
    = \Re a_\eps(R_\eps(z) u, R_\eps(z) u) + \omega \normsqr[H_\eps]{R_\eps(z)u} \\
    & = \Re \iprod[H_\eps]{(\omega+A_\eps) R_\eps(z)u}{R_\eps(z)u} \\
    & = \Re \iprod[H_\eps]{(\omega+z)R_\eps(z)u}{R_\eps(z)u} - \Re \iprod[H_\eps]{u}{R_\eps(z)u} \\
    & \le \bigl( \abs{\omega + z} \cdot \norm[\mathscr{L}(H_\eps)]{R_\eps(z)} + 1 \bigr) \norm[\mathscr{L}(H_\eps)]{R_\eps(z)} \normsqr[H_\eps]{u}
  \end{align*}
  Now~\eqref{eq:resolvent_est} implies the first estimate for
  \[
  C_\theta \coloneqq \sqrt{\frac{(1 + D_\theta) D_\theta}{\alpha}}.
  \]
  The second estimate can be proved like the first. In fact,
  $R(z,A_\eps)^\ast = R(\conj{z},A_\eps^\ast)$
  and $A_\eps^\ast$ is associated with the form $a_\eps^\ast$ given by
  \begin{equation*}
    a_\eps^\ast(u,v) \coloneqq \conj{a_\eps(v,u)}.
  \end{equation*}
	Thus it suffices to realise that $a_\eps^\ast$
	satisfies~\eqref{eq:form_bdd} and~\eqref{eq:form_coercive}
	for the same constants as $a_\eps$.
\end{proof}

\begin{lemma}\label{lem:clar}
	Let $\Jup$ and $\Jdown$ be as in \Def{quasi-uni}, and let
	$B_\eps \in \mathscr{L}(H_\eps,V_\eps)$ and
	$B_0 \in \mathscr{L}(H_0,V_0)$. Then
	\[
		\norm[\mathscr{L}(H_\eps)]{B_\eps - \Jup B_0 \Jdown}
			\le \kappa \norm[\mathscr{L}(H_\eps,H_0)]{\Jdown B_\eps - B_0 \Jdown}
				+ \delta_\eps \norm[\mathscr{L}(H_\eps,V_\eps)]{B_\eps}.
	\]
\end{lemma}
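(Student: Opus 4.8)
The plan is to estimate $B_\eps - \Jup B_0 \Jdown$ by inserting the ``near-identity'' factors $\Jdown\Jup \approx \id$ and $\Jup\Jdown \approx \id$ supplied by~\eqref{A3}, so as to replace $B_\eps$ by something that can be compared with $\Jup B_0 \Jdown$ on the nose. Concretely, I would write
\[
  B_\eps - \Jup B_0 \Jdown
    = \bigl(B_\eps - \Jup \Jdown B_\eps\bigr)
      + \Jup\bigl(\Jdown B_\eps - B_0 \Jdown\bigr),
\]
which is an exact algebraic identity. The first term is controlled by~\eqref{A3}: since $B_\eps$ maps $H_\eps$ into $V_\eps$, we have
\[
  \norm[\mathscr{L}(H_\eps)]{B_\eps - \Jup\Jdown B_\eps}
    \le \norm[\mathscr{L}(V_\eps,H_\eps)]{\id - \Jup\Jdown}\,\norm[\mathscr{L}(H_\eps,V_\eps)]{B_\eps}
    \le \delta_\eps \norm[\mathscr{L}(H_\eps,V_\eps)]{B_\eps}.
\]
The second term factors through $\Jup$, whose norm as an operator $H_0 \to H_\eps$ is at most $\kappa$ by~\eqref{A4}, giving
\[
  \norm[\mathscr{L}(H_\eps)]{\Jup(\Jdown B_\eps - B_0 \Jdown)}
    \le \kappa \norm[\mathscr{L}(H_\eps,H_0)]{\Jdown B_\eps - B_0 \Jdown}.
\]
Adding the two bounds and using the triangle inequality yields exactly the claimed estimate.

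There is essentially no obstacle here — the only thing to be slightly careful about is which operator norm each factor is measured in, so that the compositions type-check: $B_\eps$ is used as a map $H_\eps \to V_\eps$ when paired with $\id - \Jup\Jdown \in \mathscr{L}(V_\eps,H_\eps)$, while $\Jdown B_\eps - B_0\Jdown$ is genuinely a map $H_\eps \to H_0$ (both $\Jdown B_\eps$ and $B_0\Jdown$ land in $H_0$, noting $\Jdown \in \mathscr{L}(H_\eps,H_0)$ and $B_0 \in \mathscr{L}(H_0,V_0) \hookrightarrow \mathscr{L}(H_0,H_0)$ via the continuous embedding $V_0 \hookrightarrow H_0$). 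Since the decomposition is an exact identity, the proof is just these two one-line estimates plus the triangle inequality, and requires no hypotheses beyond~\eqref{A3} and~\eqref{A4}.
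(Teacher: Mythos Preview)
Your proof is correct and is essentially identical to the paper's own argument: the paper also splits $B_\eps - \Jup B_0 \Jdown$ as $(B_\eps - \Jup\Jdown B_\eps) + \Jup(\Jdown B_\eps - B_0\Jdown)$ and then applies~\eqref{A3} and~\eqref{A4} to the two pieces.
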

\begin{proof}
	By~\eqref{A3} and~\eqref{A4} we have
	\begin{align*}
		\norm[\mathscr{L}(H_\eps)]{B_\eps - \Jup B_0 \Jdown}
			& \le \norm[\mathscr{L}(H_\eps)]{B_\eps - \Jup \Jdown B_\eps}
				+ \norm[\mathscr{L}(H_\eps)]{\Jup \Jdown B_\eps - \Jup B_0 \Jdown} \\
			& \le \delta_\eps \norm[\mathscr{L}(H_\eps,V_\eps)]{B_\eps}
				+ \kappa \norm[\mathscr{L}(H_\eps,H_0)]{\Jdown B_\eps - B_0 \Jdown}.
	\qedhere
	\end{align*}
\end{proof}

\begin{lemma}\label{uuuugly}
	Let $\Jup$ and $\Jdown$ be as in \Def{quasi-uni},
	and let $f \in V_0$ and $u \in H_\eps$. Then
	\[
		\norm[H_0]{\Jdown u-f} \le
			\kappa \norm[H_\eps]{u - \Jup f}
				+ \delta_\eps \norm[V_0]{f}.
	\]
\end{lemma}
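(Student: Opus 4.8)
The plan is to reach the estimate by a single application of the triangle inequality, after inserting the intermediate vector $\Jdown\Jup f$. Concretely, I would write
\[
  \Jdown u - f = \Jdown(u - \Jup f) + (\Jdown\Jup f - f),
\]
which is an identity in $H_0$ (note that $\Jup f \in H_\eps$, so $u - \Jup f \in H_\eps$ and $\Jdown(u-\Jup f)\in H_0$, while $\Jdown\Jup f - f \in H_0$ as well). Applying $\norm[H_0]{\cdot}$ and the triangle inequality then splits the left-hand side into the two terms that appear on the right-hand side of the claimed bound, and it remains only to estimate each separately.

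For the first term I would use the uniform bound $\norm[\mathscr{L}(H_\eps,H_0)]{\Jdown}\le\kappa$ from~\eqref{A4}, which gives $\norm[H_0]{\Jdown(u-\Jup f)}\le\kappa\,\norm[H_\eps]{u-\Jup f}$. For the second term I would invoke condition~\eqref{A3} in the form $\norm[\mathscr{L}(V_0,H_0)]{\id - \Jdown\Jup}\le\delta_\eps$; here it is essential that this estimate is formulated for the operator viewed from $V_0$ into $H_0$, since our $f$ lies in $V_0$, so we obtain $\norm[H_0]{f-\Jdown\Jup f}\le\delta_\eps\,\norm[V_0]{f}$. Adding the two contributions yields exactly the asserted inequality.

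There is no genuine obstacle in this argument; it is purely a bookkeeping matter of which identification operators and which of the conditions~\eqref{A1}--\eqref{A5} to invoke, and of keeping track of the spaces on which the various norms are taken. The only point that requires a moment's attention is the role of the $\mathscr{L}(V_0,H_0)$-norm in~\eqref{A3}: it is precisely this (as opposed to an $\mathscr{L}(H_0)$-norm, which is not assumed to be small) that makes the term $\Jdown\Jup f - f$ controllable, at the cost of the stronger norm $\norm[V_0]{f}$ on the right.
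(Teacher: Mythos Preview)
Your proof is correct and follows essentially the same approach as the paper: both use the decomposition $\Jdown u - f = \Jdown(u - \Jup f) + (\Jdown\Jup f - f)$ and then apply~\eqref{A4} and~\eqref{A3} to the two pieces. The paper phrases the estimate via pairing with an arbitrary $g\in H_0$, whereas you apply the triangle inequality directly; the latter is in fact slightly cleaner.
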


\begin{proof}
	Let $g\in H_0$. By \eqref{A3} and~\eqref{A4}
	\begin{align*}
		\bigabs{\iprod[H_0]{\Jdown u-f}{g}}
		& \le \bigabs{\iprod[H_\eps]{\Jdown (u - \Jup f)}{g}} 
                   + \bigabs{\iprod[H_\eps]{\Jdown \Jup f - f} g } \\
		& \le \kappa \norm[H_\eps]{u - \Jup f} \norm[H_0]{g} + \delta_\eps \norm[V_0]{f} \norm[H_0]{g}.
	\end{align*}
        	Since $g$ is arbitrary, this proves the claim.
\end{proof}

Now we prove the key estimate of this section.
\begin{proposition}
  \label{prp:updown}
  Let $(a_\eps)_\eps$ be an equi-elliptic family of sesquilinear
  forms with vertex $\omega$ and semi-angle $\theta_0:=\arctan
  (M/\alpha)$ as in \Def{equi_sec}, and let $a_\eps$ and $a_0$ be
  $\delta_\eps$-$\kappa$-quasi-unitarily equivalent (see
  \Def{quasi-uni}).  Moreover, let $r>0$ and $\theta \in (\theta_0,
  \pi]$.  Then there exist constants $C_{\theta,r,1} > 0$ and
  $C_{\theta,r,2} > 0$ such that
  \begin{equation}\label{downdown}
    \norm[\mathscr{L}(H_0,H_\eps)] {R(z,A_\eps) \Jup  - \Jup  R(z,A_0)}
    \le \frac{\delta_\eps C_{\theta,r,1}}{\sqrt{|z+\omega|}}
  \end{equation}
  and
  \begin{equation}\label{updown}
    \norm[\mathscr{L}(H_\eps)] {R(z,A_\eps) - \Jup  R(z,A_0) \Jdown}
    \le \frac{\delta_\eps C_{\theta,r,2}}{\sqrt{|z+\omega|}}
  \end{equation}
  for all $z \not\in \Sigma_\theta - \omega$ satisfying
  $\abs{z+\omega} \ge r$.
\end{proposition}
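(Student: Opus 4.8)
The plan is to deduce both estimates from one variational identity together with an elementary strengthening of \Lem{rest}, which I would set up first. Fix $z$ with $\zeta \coloneqq z+\omega \notin \Sigma_\theta$ and write $R_\eps(z)\coloneqq R(z,A_\eps)$, $R_0(z)\coloneqq R(z,A_0)$. \emph{Claim:} if $\psi\in V_\eps$ satisfies
\[
  z\iprod[H_\eps]{\psi}{v}-a_\eps(\psi,v)=\iprod[H_\eps]{h}{v}+r(v)\qquad(v\in V_\eps)
\]
for some $h\in H_\eps$ and some $r\colon V_\eps\to\C$ with $\abs{r(v)}\le\rho\norm[V_\eps]{v}$, then $\norm[H_\eps]{\psi}\le \frac{D_\theta}{\abs\zeta}\norm[H_\eps]{h}+\frac{D_\theta}{\sqrt{\alpha\abs\zeta}}\rho$. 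To see this, $\psi_2\coloneqq\psi-R_\eps(z)h\in V_\eps$ solves $z\iprod[H_\eps]{\psi_2}{v}-a_\eps(\psi_2,v)=r(v)$; one bounds $R_\eps(z)h$ by~\eqref{eq:resolvent_est}, and tests the equation for $\psi_2$ with $v=\psi_2$, so that $\zeta\normsqr[H_\eps]{\psi_2}-b_\eps(\psi_2,\psi_2)=r(\psi_2)$ with $b_\eps(u,u)\coloneqq a_\eps(u,u)+\omega\normsqr[H_\eps]{u}$. Since $\Re b_\eps(\psi_2,\psi_2)\ge\alpha\normsqr[V_\eps]{\psi_2}$ by~\eqref{eq:form_coercive} and $\abs{\Im b_\eps(\psi_2,\psi_2)}\le(M/\alpha)\Re b_\eps(\psi_2,\psi_2)$ by~\eqref{eq:form_bdd}, the argument of $b_\eps(\psi_2,\psi_2)$ is at most $\theta_0$ in modulus while $\abs{\arg\zeta}\ge\theta$, so the numerical-range estimate used to derive~\eqref{eq:resolvent_est} gives $\abs{r(\psi_2)}\ge D_\theta^{-1}\max(\abs\zeta\normsqr[H_\eps]{\psi_2},\alpha\normsqr[V_\eps]{\psi_2})$; combined with $\abs{r(\psi_2)}\le\rho\norm[V_\eps]{\psi_2}$ this yields $\norm[H_\eps]{\psi_2}\le D_\theta\rho/\sqrt{\alpha\abs\zeta}$.

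For~\eqref{downdown}, fix $f\in H_0$ and put $w\coloneqq R_0(z)f\in D(A_0)\subset V_0$ and $\psi\coloneqq R_\eps(z)\Jup f-\Jup_1 w\in V_\eps$; by~\eqref{eq:resolvent_est} and~\Lem{rest}, $\norm[H_0]{w}\le D_\theta\abs\zeta^{-1}\norm[H_0]{f}$ and $\norm[V_0]{w}\le C_\theta\abs\zeta^{-1/2}\norm[H_0]{f}$. Subtracting the weak identities $z\iprod[H_\eps]{R_\eps(z)\Jup f}{v}-a_\eps(R_\eps(z)\Jup f,v)=\iprod[H_\eps]{\Jup f}{v}$ and $a_0(w,\Jdown_1 v)=\iprod[H_0]{zw-f}{\Jdown_1 v}$ (valid since $R_\eps(z)\Jup f\in D(A_\eps)$, $w\in D(A_0)$, $\Jdown_1 v\in V_0$), then using~\eqref{A5} to trade $a_\eps(\Jup_1 w,v)$ for $a_0(w,\Jdown_1 v)$ and~\eqref{A1},~\eqref{eq:adjoint_scalar} to pass between $\Jup,\Jup_1$ and $\Jdown,\Jdown_1$, one is led to a decomposition as in the Claim with
\[
  h=(\Jup-(\Jdown)^\ast)f+z(\Jup-\Jup_1)w-z(\Jup-(\Jdown)^\ast)w,\quad
  r(v)=\iprod[H_0]{f-zw}{(\Jdown-\Jdown_1)v}+\bigl(a_\eps(\Jup_1 w,v)-a_0(w,\Jdown_1 v)\bigr).
\]
Each factor $z$ here either stands next to an $H_\eps$-inner product (hence is absorbed by the gain $\abs\zeta^{-1}$ of $R_\eps(z)$ on $H_\eps$ in the Claim) or is paired with $\norm[H_0]{w}=\Err(\abs\zeta^{-1})$; so a straightforward bookkeeping using~\eqref{A1},~\eqref{A2},~\eqref{A5}, $\abs z\le\abs\zeta+\abs\omega$ and $\abs\zeta\ge r$ gives $\norm[H_\eps]{h}\le C\sqrt{\abs\zeta}\,\delta_\eps\norm[H_0]{f}$ and $\abs{r(v)}\le C\delta_\eps\norm[H_0]{f}\norm[V_\eps]{v}$, with $C$ depending only on $\theta$, $r$ and the equi-ellipticity constants. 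The Claim then yields $\norm[H_\eps]{\psi}\le C\abs\zeta^{-1/2}\delta_\eps\norm[H_0]{f}$, and~\eqref{downdown} follows upon adding $\norm[H_\eps]{(\Jup_1-\Jup)w}\le\delta_\eps\norm[V_0]{w}\le C_\theta\abs\zeta^{-1/2}\delta_\eps\norm[H_0]{f}$ from~\eqref{A1} and letting $f$ vary.

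For~\eqref{updown}, write $R_\eps(z)-\Jup R_0(z)\Jdown=R_\eps(z)(\id-\Jup\Jdown)+(R_\eps(z)\Jup-\Jup R_0(z))\Jdown$. The second term is bounded in $\mathscr L(H_\eps)$ by $\kappa$ times the right-hand side of~\eqref{downdown} using~\eqref{A4}. For the first, argue by duality: for $u,g\in H_\eps$,
\[
  \iprod[H_\eps]{R_\eps(z)(\id-\Jup\Jdown)u}{g}=\iprod[H_\eps]{(\id-\Jup\Jdown)u}{R_\eps(z)^\ast g}=\iprod[H_\eps]{u}{(\id-(\Jup\Jdown)^\ast)R_\eps(z)^\ast g},
\]
where $R_\eps(z)^\ast=R(\conj z,A_\eps^\ast)$ maps $H_\eps$ into $V_\eps$ with $\norm[V_\eps]{R_\eps(z)^\ast g}\le C_\theta\abs\zeta^{-1/2}\norm[H_\eps]{g}$ by~\Lem{rest}, while~\eqref{A3},~\eqref{A4} and~\eqref{A2} give $\norm[\mathscr L(V_\eps,H_\eps)]{\id-(\Jup\Jdown)^\ast}\le C\delta_\eps$ (split $\id-(\Jup\Jdown)^\ast=(\id-\Jup\Jdown)+(\Jup\Jdown-(\Jup\Jdown)^\ast)$, write $\Jup\Jdown-(\Jup\Jdown)^\ast=\Jup(\Jdown-(\Jup)^\ast)+(\Jup-(\Jdown)^\ast)(\Jup)^\ast$, and estimate each piece). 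Hence the left-hand side is at most $C\abs\zeta^{-1/2}\delta_\eps\norm[H_\eps]{u}\norm[H_\eps]{g}$, so $\norm[\mathscr L(H_\eps)]{R_\eps(z)(\id-\Jup\Jdown)}\le C\abs\zeta^{-1/2}\delta_\eps$, which completes the proof.

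The delicate point is the bookkeeping in the step for~\eqref{downdown}: comparing $R_\eps(z)\Jup f$ directly with $\Jup R_0(z)f$ produces a residual whose crude norm is only $\Err(\delta_\eps)$ rather than $\Err(\delta_\eps\abs\zeta^{-1/2})$, because $\Jup-\Jup_1$ is controlled by~\eqref{A1} only as an operator from $V_0$ (not $H_0$) into $H_\eps$ and gets multiplied by $z$. The resolution is precisely the split in the Claim: the residual is decomposed into an $H_\eps$-component that is \emph{allowed} to blow up like $\sqrt{\abs\zeta}$ --- since $R_\eps(z)$ gains a full power $\abs\zeta^{-1}$ on $H_\eps$ --- and a component controlled only in the dual norm of $V_\eps$, which must remain bounded; making both fit forces one to keep each $z$-carrying term on the correct side of the pairing and to exploit the extra decay $\norm[H_0]{R_0(z)f}=\Err(\abs\zeta^{-1})$ from~\eqref{eq:resolvent_est}.
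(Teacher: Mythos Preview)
Your argument is correct, but it is organised differently from the paper's.

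For~\eqref{downdown}, the paper estimates the bilinear form
$\bigabs{\iprod[H_\eps]{(R_\eps(z)\Jup-\Jup R_0(z))f}{u}}$ directly: it moves $\Jup$ to $(\Jup)^*$ via~\eqref{A2}, passes to $\Jdown_1$ and $\Jup_1$ via~\eqref{A1}, writes the resulting expression as a difference of the forms $a_0$ and $a_\eps$ (using that $R_\eps(z)^*u\in V_\eps$ and $R_0(z)f\in V_0$), and then applies~\eqref{A5} together with the resolvent bounds of \Lem{rest}. Your route is instead an energy argument: you first isolate the a~priori estimate (``the Claim'') for solutions of a perturbed variational problem, and then verify that $\psi=R_\eps(z)\Jup f-\Jup_1 R_0(z)f$ satisfies such a problem with the right-hand side split into an $H_\eps$-part $h$ (allowed to grow like $\sqrt{\abs{z+\omega}}$) and a $V_\eps^*$-part $r$ (required to stay bounded). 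Both approaches use the same ingredients \eqref{A1}--\eqref{A5} and \Lem{rest}; yours makes the mechanism behind the $\abs{z+\omega}^{-1/2}$ gain more transparent, while the paper's direct expansion yields explicit constants more readily.

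For~\eqref{updown} the difference is sharper. The paper applies \Lem{clar}, which reduces~\eqref{updown} to the \emph{dual} estimate $\norm{\Jdown R_\eps(z)-R_0(z)\Jdown}$ (obtained from~\eqref{downdown} by the symmetry of \Def{quasi-uni}) plus the $V_\eps$-bound on $R_\eps(z)$. You avoid invoking the dual of~\eqref{downdown}: your splitting $R_\eps(z)-\Jup R_0(z)\Jdown=R_\eps(z)(\id-\Jup\Jdown)+(R_\eps(z)\Jup-\Jup R_0(z))\Jdown$ uses~\eqref{downdown} itself for the second piece, and for the first piece you dualise only at the level of $R_\eps(z)^*$ and control $\id-(\Jup\Jdown)^*$ on $V_\eps$ via~\eqref{A2}--\eqref{A4}. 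This is slightly more self-contained, at the price of the extra algebra $\Jup\Jdown-(\Jup\Jdown)^*=\Jup(\Jdown-(\Jup)^*)+(\Jup-(\Jdown)^*)(\Jup)^*$.
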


\begin{proof}
  Let $D_\theta$ be as in~\eqref{eq:resolvent_est} and let $C_\theta$
  be as in Lemma~\ref{lem:rest}.  Let $f \in H_0$ and $u \in H_\eps$
  be arbitrary, and fix $z \not\in \Sigma_\theta - \omega$.  Then
  by~\eqref{A2} (see also~\eqref{eq:adjoint_scalar}),
  \eqref{eq:resolvent_est}, \eqref{A1} and Lemma~\ref{lem:rest}
  \begin{align*}
    & \hspace*{-1em} \bigabs{\bigiprod[H_\eps]{\bigl(R_\eps(z) \Jup - \Jup R_0(z)\bigr)f} u } \\
    &\le \bigabs{\iprod[H_0]{f}{\Jdown R_\eps(z)^\ast u} -
      \iprod[H_\eps]{\Jup R_0(z) f}{u}}
    + \frac{\delta_\eps D_\theta}{|z+\omega|} \norm[H_\eps]{u} \norm[H_0]{f} \\
    &\le \bigabs{\iprod[H_0]{(z-A_0) R_0(z) f}{\Jdown_1 R_\eps(z)^\ast
        u}
      - \iprod[H_\eps]{\Jup_1 R_0(z) f}{(z-A_\eps)^\ast R_\eps(z)^\ast u}} \\
    & \qquad + \Bigl( \frac{2 \delta_\eps C_\theta}{|z+\omega|^{1/2}} + \frac{\delta_\eps D_\theta}{|z+\omega|} \Bigr) \norm[H_\eps]{u} \norm[H_0]{f}\\
    & \le \bigabs{ a_0(R_0(z)f, \Jdown_1 R_\eps(z)^\ast u) - a_\eps(\Jup_1 R_0(z) f, R_\eps(z)^\ast u) } \\
    & \qquad + |z| \bigabs{ \iprod[H_0]{R_0(z)f}{\Jdown_1 R_\eps(z)^\ast u} - \iprod[H_\eps]{\Jup_1 R_0(z)f}{R_\eps(z)^\ast u} } \\
    & \qquad + \Bigl( \frac{2 \delta_\eps C_\theta}{|z+\omega|^{1/2}}
    + \frac{\delta_\eps D_\theta}{|z+\omega|} \Bigr) \norm[H_\eps]{u}
    \norm[H_0]{f}.
  \end{align*}
  Using~\eqref{A5} and once again~\eqref{A2} we can further estimate
  \begin{align*}
    & \hspace*{-2em} \bigabs{\iprod[H_\eps]
      {\bigl(R_\eps(z) \Jup - \Jup R_0(z)\bigr)f} u }\\
    & \le \delta_\eps \norm[V_0]{R_0(z)f} \norm[V_\eps]{R_\eps(z)^\ast u} \\
    & \qquad + |z| \bigabs{ \iprod[H_0]{R_0(z)f}{\Jdown R_\eps(z)^\ast u} - \iprod[H_\eps]{\Jup R_0(z)f}{R_\eps(z)^\ast u} } \\
    & \qquad + \Bigl( \frac{\delta_\eps |z| C_\theta
      D_\theta}{|z+\omega|^{3/2}} + \frac{2 \delta_\eps
      C_\theta}{|z+\omega|^{1/2}} + \frac{\delta_\eps D_\theta}{|z+\omega|} \Bigr) \norm[H_\eps]{u} \norm[H_0]{f} \\
    & \le \Bigl( \frac{\delta_\eps (D_\theta +
      C_\theta^2)}{|z+\omega|} + \frac{\delta_\eps |z|
      D_\theta^2}{|z+\omega|^2} + \frac{\delta_\eps |z| C_\theta
      D_\theta}{|z+\omega|^{3/2}} + \frac{2 \delta_\eps
      C_\theta}{|z+\omega|^{1/2}}\Bigr) \norm[H_\eps]{u}
    \norm[H_0]{f}.
  \end{align*}
  For $\abs{z+\omega} \ge r$, this implies~\eqref{downdown} with
  \begin{equation*}
    C_{\theta,r,1}
    \coloneqq (C_\theta D_\theta + 2 C_\theta)
    + \frac{D_\theta + C_\theta^2 + D_\theta^2}{r^{1/2}} 
    + \frac{\abs \omega C_\theta D_\theta} r
    + \frac{\abs{\omega}  D_\theta^2} {r^{3/2}}.
  \end{equation*}
  Estimate~\eqref{updown} is a consequence of~\eqref{downdown} since
  by Lemma~\ref{lem:clar} and Lemma~\ref{lem:rest} we have
  \begin{equation*}
    \norm[\mathscr{L}(H_\eps)]{R_\eps(z)  - \Jup R_0(z) \Jdown}
    \le \kappa 
        \norm[\mathscr{L}(H_\eps,H_0)]{\Jdown R_\eps(z) - R_0(z)
          \Jdown}
    + \frac{\delta_\eps C_\theta}{\sqrt{|z+\omega|}},
  \end{equation*}
  so we can choose $C_{\theta,r,2} \coloneqq \kappa C_{\theta,r,1} +
  C_\theta$.
\end{proof}

\begin{remark}
  Estimate~\eqref{updown} tells us that we can find a good
  approximation of the operator $A_\eps$ in terms of the (often
  simpler) operators $A_0$, $\Jup$ and $\Jdown$, at least for small
  $\eps$. This is interesting by itself.  In fact, we even have a
  rather explicit error estimate; in the proof we have given concrete
  (though certainly not optimal) constants.  However, since these
  expressions are quite cumbersome, we prefer to work with the general
  constants $C_{\theta,r,1}$ and $C_{\theta,r,2}$.
\end{remark}

Define
\begin{equation*}
  H^\infty(\Sigma_\theta - \omega) 
  \coloneqq \bigset{\phi\colon \Sigma_\theta - \omega \to \C} {\phi \text{ is holomorphic and bounded}}
\end{equation*}
and
\begin{equation*}
  H^\infty_{00}(\Sigma_\theta - \omega)
  \coloneqq \bigset{\psi \in H^\infty(\Sigma_\theta - \omega)} 
             {\exists \mu > \tfrac{1}{2} \text{ such that } \; \psi(z) \in \Err(\abs{z}^{-\mu}) \; (z \to
    \infty)} 
\end{equation*}
and equip these spaces with the supremum norm.  Let $\theta \in
\thetainterval$.  We define the primary functional calculus of
$A_\eps$ for $\psi \in H^\infty_{00}(\Sigma_\theta - \omega)$ by
\begin{equation}\label{eq:prim_func_calc}
	\psi(A_\eps) \coloneqq \frac{1}{2\pi\im} \int_{\partial(\Sigma_\sigma - \omega)} \psi(z) R(z,A_\eps) \dd z,
\end{equation}
where $\sigma \in (\arctan{\tfrac{M}{\alpha}},\theta)$.  By Cauchy's
integral theorem, this definition is independent of the choice of
$\sigma$ and agrees with the usual definition of the functional
calculus, compare also Sec.~2.5.1 of~\cite{haase:06}.

\begin{remark}
  In our setting, the natural space for the primary functional
  calculus would be the larger space
  \begin{equation*}
    H^\infty_0(\Sigma_\theta)
    \coloneqq \bigset{\psi \in H^\infty(\Sigma_\theta)} 
        {\exists \; \mu > 0 \text{ such that }
           \psi(z) \in \Err(\abs{z}^{-\mu}) \; (z \to \infty)} 
  \end{equation*}    
  since in fact~\eqref{eq:prim_func_calc} is defined even for $\psi
  \in H^\infty_0(\Sigma_\theta - \omega)$.  However, using
  estimate~\eqref{updown} we can show convergence of $\psi(A_\eps)$ to
  $\psi(A_0)$ only for $\psi \in H^\infty_{00}(\Sigma_\theta -
  \omega)$.
\end{remark}

Since the operators $A_\eps$ are m-sectorial in the sense of Kato,
this functional calculus has a natural extension to $\phi \in H^\infty(\Sigma_\theta - \omega)$, and
the operator $\phi(A_\eps)$ is bounded with norm
\begin{equation}
  \label{eq:func_calc_bound}
  \norm[\Lin {H_\eps}]{\phi(A_\eps)}
  \le \Bigl( 2 + \frac 2 {\sqrt 3} \Bigr) \norm[\infty]{\phi},
\end{equation}
cf.~Corollary~7.1.17 of~\cite{haase:06}. It is important for us to
have a bound on the norm of $\phi(A_\eps)$ that is uniform with
respect to $\eps$.

We are now able to show that $\phi(A_\eps)$ converges to $\phi(A_0)$
is the following sense if $(a_\eps)_{\eps>0}$ converges to $a_0$ in
the sense of \Def{quasi-uni}, which is our main result in the context
of the functional calculus.

\begin{theorem}
  \label{thm:hol.calc2}
  Let $a_0$ and $(a_\eps)_\eps$ be equi-elliptic sesquilinear forms
  with associated operators $A_0$ and $(A_\eps)_\eps$ as in
  \Sec{general}.  Assume in addition that $a_\eps$ is
  $\delta_\eps$-$\kappa$-quasi-unitarily equivalent to $a_0$.  Let
  $\theta \in \thetainterval$.  Then for all $\psi \in
  H^\infty_{00}(\Sigma_\theta - \omega)$ there exists $C_\psi \ge 0$
  such that
	\begin{equation}\label{eq:psi_est}
		\norm[\mathscr{L}(H_\eps)]{\Jup\psi(A_0)\Jdown - \psi(A_\eps)} \le C_\psi \delta_\eps.
	\end{equation}
	Moreover, for all $\psi \in H^\infty(\Sigma_\theta - \omega)$ there exists $C_\psi \ge 0$ such that
	\begin{equation}\label{eq:phi_est}
		\norm[H_\eps]{\Jup \phi(A_0)\Jdown u-\phi(A_\eps)u}\le C_\phi \delta_\eps \norm[H_\eps]{(\omega+1+A_\eps) u}
	\end{equation}
	for all $u \in D(A_\eps)$.
\end{theorem}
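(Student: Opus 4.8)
The plan is to prove \eqref{eq:psi_est} first via the Cauchy integral representation \eqref{eq:prim_func_calc}, then bootstrap to \eqref{eq:phi_est} by an approximation argument using a suitable family of functions in $H^\infty_{00}$ that approximate $\phi$ pointwise while having the decay needed to make the integral converge. For the first estimate, I would write
\[
  \Jup \psi(A_0) \Jdown - \psi(A_\eps)
  = \frac{1}{2\pi\im} \int_{\partial(\Sigma_\sigma - \omega)} \psi(z)
     \bigl( \Jup R(z,A_0) \Jdown - R(z,A_\eps) \bigr) \dd z,
\]
which is legitimate since $\Jup$ and $\Jdown$ are bounded and the integrand is Bochner-integrable. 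Now I apply \Prp{updown}: choosing $\theta$ and some $r > 0$, on the part of the contour with $\abs{z+\omega} \ge r$ the integrand is bounded in $\Lin{H_\eps}$ by $\delta_\eps C_{\theta,r,2} \abs{z+\omega}^{-1/2} \abs{\psi(z)}$. On the bounded part $\abs{z+\omega} < r$ of the contour I simply use the individual resolvent bounds \eqref{eq:resolvent_est} together with \eqref{A4}, which gives a bound of the form $\delta_\eps$ times a constant (here the $\delta_\eps$ comes because one can absorb the difference $\Jup R(z,A_0)\Jdown - R(z,A_\eps)$ into \eqref{updown} — actually \eqref{updown} is uniform in $z$ on the compact piece anyway once $\abs{z+\omega}\ge r$, so I would just pick $r$ small enough that the piece $\abs{z+\omega}<r$ contributes a bounded-length arc and still invoke \eqref{updown}, noting the contour stays away from the spectrum). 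The decay hypothesis $\psi(z) \in \Err(\abs{z}^{-\mu})$ with $\mu > 1/2$ ensures $\int \abs{\psi(z)} \abs{z+\omega}^{-1/2} \abs{\dd z} < \infty$ along the unbounded rays, so the whole integral is bounded by $C_\psi \delta_\eps$ with $C_\psi$ depending only on $\psi$, $\theta$, $\kappa$ and the equi-ellipticity constants. This is exactly where the $\mu>1/2$ (rather than $\mu>0$) is needed, matching the remark preceding the theorem.

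For \eqref{eq:phi_est}, the idea is the standard regularisation: fix $\phi \in H^\infty(\Sigma_\theta-\omega)$ and set $e(z) \coloneqq (\omega+1+z)^{-1}$, noting that $\psi \coloneqq \phi \cdot e$ lies in $H^\infty_{00}(\Sigma_\theta-\omega)$ since $e(z) \in \Err(\abs z^{-1})$, so $\mu = 1$ works. By construction of the extended functional calculus (compatibility with products of the resolvent, cf.\ Sec.~2.5.1 and Ch.~7 of~\cite{haase:06}) one has $\psi(A_\eps) = \phi(A_\eps) R(-\omega-1, A_\eps) = \phi(A_\eps)(\omega+1+A_\eps)^{-1}$ and likewise for $A_0$. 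Hence for $u \in D(A_\eps)$, writing $v \coloneqq (\omega+1+A_\eps)u \in H_\eps$, we get $\phi(A_\eps) u = \psi(A_\eps) v$, and I need to compare this with $\Jup \phi(A_0)\Jdown u$. The latter is $\Jup \phi(A_0)(\omega+1+A_0)^{-1}\Jdown v = \Jup\psi(A_0)\Jdown v$ up to the commutator error $\Jup\phi(A_0)\bigl[(\omega+1+A_0)^{-1}\Jdown - \Jdown(\omega+1+A_\eps)^{-1}\bigr]v$. The bracketed term is controlled by \eqref{downdown} (applied to $R(-\omega-1,\cdot)$, after noting $-\omega-1 \notin \Sigma_\theta-\omega$ and $\abs{-\omega-1+\omega} = 1 \ge r$ for suitable $r\le 1$), giving a bound $\delta_\eps$-small; and $\phi(A_0)$ is bounded uniformly by \eqref{eq:func_calc_bound}. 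Combining with \eqref{eq:psi_est} applied to $\psi$ then yields \eqref{eq:phi_est} with $C_\phi$ proportional to $\norm[\infty]{\phi}$ times the constants already in hand.

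The main obstacle I anticipate is not any single estimate but the careful bookkeeping of the functional-calculus compatibility identities — specifically justifying $\psi(A_\eps) = \phi(A_\eps) R(-\omega-1,A_\eps)$ for the extended (not merely primary) calculus, and making sure the regulariser $e$ is genuinely admissible so that $\phi \cdot e \in H^\infty_{00}$. One must also be slightly careful that the contour $\partial(\Sigma_\sigma - \omega)$ can be chosen to avoid $-\omega-1$ and to have the compact piece $\{\abs{z+\omega}<r\}$ nonempty or empty as convenient; choosing $r$ small resolves this. Everything else — interchanging $\Jup,\Jdown$ with the Bochner integral, the rectifiability of the contour, uniformity of constants in $\eps$ — is routine given \Prp{updown}, \Lem{rest}, and the uniform sectoriality from \Def{equi_sec}.
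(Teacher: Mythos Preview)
Your argument for~\eqref{eq:phi_est} is essentially the paper's argument, organised slightly differently (you use the dual of~\eqref{downdown} directly for the commutator $\Jdown(\omega+1+A_\eps)^{-1}-(\omega+1+A_0)^{-1}\Jdown$, whereas the paper routes through \Lem{uuuugly} and the already-proved~\eqref{eq:psi_est} for $\psi^\ast(z)=(\omega+1+z)^{-1}$); both work.

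There is, however, a genuine gap in your proof of~\eqref{eq:psi_est}, precisely at the piece of the contour with $\abs{z+\omega}<r$. Your parenthetical handling of this piece is self-contradictory: first you propose using~\eqref{eq:resolvent_est} and~\eqref{A4}, but these bound $R(z,A_\eps)$ and $\Jup R(z,A_0)\Jdown$ \emph{individually} and give no $\delta_\eps$-factor for their difference; then you propose invoking~\eqref{updown} anyway, but~\eqref{updown} is stated only for $\abs{z+\omega}\ge r$, and inspecting the constant $C_{\theta,r,2}$ in the proof of \Prp{updown} shows it blows up as $r\to 0$. The contour $\partial(\Sigma_\sigma-\omega)$ passes through the vertex $-\omega$, so you cannot avoid this region by shrinking $r$.

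The paper's device for this is a \emph{vertex shift}: from the ellipticity estimate one has, for any small $\nu\in(0,\alpha c_V^{-2})$,
\[
  \Re a_\eps(u,u)+(\omega-\nu)\normsqr[H_\eps]{u}
  \ge(\alpha-\nu c_V^2)\normsqr[V_\eps]{u},
\]
so each $A_\eps$ is also m-sectorial with vertex $-\omega+\nu$ and semi-angle $\theta'=\arctan\bigl(M/(\alpha-\nu c_V^2)\bigr)$; choosing $\nu$ small makes $\theta'<\sigma$. Now apply \Prp{updown} with this shifted vertex: points of $\partial(\Sigma_\sigma-\omega)$ near $-\omega$ lie outside $\Sigma_{\theta'}-\omega+\nu$ and at distance at least $\nu/2$ from the new vertex, so~\eqref{updown} (with the shifted parameters) yields a uniform $\delta_\eps$-bound there. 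This is the missing ingredient; once you have it, your contour integral argument goes through exactly as you describe.
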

\begin{proof}
	Fix $\psi \in H^\infty_{00}(\Sigma_\theta - \omega)$ and
	$\sigma \in (\arctan\tfrac{M}{\alpha},\theta)$.
	Let $\nu \in (0, \alpha c_V^{-2})$ be such that
	\[
		\theta' \coloneqq \arctan\Bigl(\frac{M}{\alpha - \nu c_V^2}\Bigr) < \sigma,
	\]
	where $\alpha$, $M$ and $c_V$ are as in~\eqref{eq:form_bdd}, \eqref{eq:form_coercive}
	and~\eqref{eq:form_embedding}. Since
	\[
		a_\eps(u,u) + (\omega - \nu) \normsqr[H_\eps]{u}
			\ge \alpha \normsqr[V_\eps]{u} - \nu \normsqr[H_\eps]{u}
			\ge (\alpha - \nu c_V^2) \normsqr[V_\eps]{u},
	\]
	the operator $A_\eps$ is m-sectorial with vertex $-\omega + \nu$ and semi-angle $\theta'$,
	and the same is true for $A_0$ on $H_0$.
	Hence by Proposition~\ref{prp:updown}
	\[
		\norm[\mathscr{L}(H_\eps)]{R(z,A_\eps) - \Jup R(z,A_0) \Jdown}
			\le \frac{\delta_\eps C_{\theta',\nu/2,2}}{\sqrt{\abs{z+\omega-\nu}}}
	\]
	for all $z \not\in \Sigma_{\theta'} - \omega + \nu$ such that $\abs{z+\omega-\nu} \ge \frac{\nu}{2}$.
	If $r > 0$ is sufficiently small, then $B(-\omega,r) \cap (\partial\Sigma_\sigma - \omega)$
	has distance at least $\frac{\nu}{2}$ to $\Sigma_{\theta'} - \omega + \nu$, and hence
	\begin{equation}\label{eq:close_est}
		\norm[\mathscr{L}(H_\eps)]{R(z,A_\eps) - \Jup R(z,A_0) \Jdown}
			\le \delta_\eps c_{\sigma,\nu}
	\end{equation}
	for all $z \in \partial(\Sigma_\sigma - \omega)$ satisfying $\abs{z+\omega} \le r$,
	where $c_{\sigma,\nu}$ and $r$ are constants depending on $\sigma$ and $\nu$,
	so in principle only on $M$, $\alpha$ and $c_V$.

	There exist $\mu > \frac{1}{2}$ and $K \ge 0$ such that
	\[
		\abs{\psi(z)} \le \frac{K}{\abs{z+\omega}^\mu}
		\quad\text{ and }\quad \abs{\psi(z)} \le K
	\]
	for all $z \in \Sigma_\theta - \omega$.
	Thus, by~\eqref{eq:prim_func_calc}, Proposition~\ref{prp:updown} and~\eqref{eq:close_est}
	\begin{align*}
		& \norm[\mathscr{L}(H_\eps)]{\Jup \psi(A_0) \Jdown - \psi(A_\eps)} \\
			& \qquad \le \frac{1}{2\pi} \int_{\partial(\Sigma_\sigma - \omega)} \abs{\psi(z)} \, \norm[\mathscr{L}(H_\eps)]{ \Jup R_0(z) \Jdown - R_\eps(z)} \dd z \\
			& \qquad \le \frac{1}{2\pi} \int_{\partial(\Sigma_\sigma - \omega) \setminus B(-\omega,r)} \frac{\delta_\eps C_{\sigma,r,2} K}{|z + \omega|^{\mu+\frac{1}{2}}} \dd z
				+ \frac{1}{2\pi} \int_{\partial(\Sigma_\sigma - \omega) \cap B(-\omega,r)} \delta_\eps c_{\sigma,\nu} K \dd z.
	\end{align*}
	Therefore, we have shown~\eqref{eq:psi_est} with
	\[
		C_\psi \coloneqq \frac{C_{\sigma,r,2} K}{(\mu - \frac{1}{2}) r^{\mu - \frac{1}{2}} \pi} + \frac{c_{\sigma,\nu} r K}{\pi}.
	\]

	In particular, there exists a constant $C_{\psi^\ast}$
    belonging to the function $\psi^\ast \in
    H^\infty_{00}(\Sigma_\theta)$ defined by
	\[
		\psi^\ast(z) \coloneqq \frac{1}{\omega + 1 + z}
	\]
	such that
	\[
		\norm[\mathscr{L}(H_\eps)]{\Jup \psi^\ast(A_0) \Jdown - \psi^\ast(A_\eps)} \le C_{\psi^\ast} \delta_\eps,
	\]
	i.e.,
	\begin{equation}\label{eq:res_est_func_calc}
		\norm[\mathscr{L}(H_\eps)]{\Jup (\omega + 1 + A_0)^{-1} \Jdown - (\omega + 1 + A_\eps)^{-1}} \le C_{\psi^\ast} \delta_\eps.
	\end{equation}

	Now let $\phi \in H^\infty(\Sigma_\theta - \omega)$ be fixed and define
	$\psi\in H^\infty_{00}(\Sigma_\theta - \omega)$ by
	\[
		\psi(z) \coloneqq \frac{\phi(z)}{\omega+1+z}
	\]
	Then $\phi(A_\eps) (\omega+1+A_\eps)^{-1} = \psi(A_\eps)$ by
        the construction of the functional calculus~ (see Sec.~2.3.2
        of~\cite{haase:06}.  Hence for all $u \in D(A_\eps)$ we have
	\begin{align*}
		& \norm[H_\eps]{\Jup \phi(A_0) (\omega + 1 + A_0)^{-1} \Jdown (\omega+1+A_\eps) u - \phi(A_\eps) u} \\
			& \qquad = \norm[H_\eps]{\Jup \psi(A_0) \Jdown (\omega+1+A_\eps)u - \psi(A_\eps) (\omega + 1 + A_\eps) u} \\
			& \qquad \le C_\psi \delta_\eps \norm[H_\eps]{(\omega + 1 + A_\eps)u}.
	\end{align*}
	Moreover, from~\eqref{A4} and \eqref{eq:func_calc_bound} we obtain that
	\begin{align*}
		& \norm[H_\eps]{\Jup \phi(A_0)\Jdown u - \Jup \phi(A_0) (\omega + 1 + A_0)^{-1} \Jdown (\omega + 1 + A_\eps) u}\\
			& \quad \le  \kappa \bigl( 2 + \frac{2}{\sqrt{3}} \bigr) \norm[\infty]{\phi}
				\norm[H_0]{\Jdown u - (\omega+1+A_0)^{-1} \Jdown (\omega+1+A_\eps) u}.
	\end{align*}
		
	Finally, by Lemma~\ref{uuuugly}, \eqref{eq:res_est_func_calc},
    Lemma~\ref{lem:rest} (for $z = -\omega-1$) and~\eqref{A4}
	\begin{align*}
		& \norm[H_0]{\Jdown u - (\omega+1+A_0)^{-1} \Jdown (\omega+1+A_\eps) u} \\
		& \quad \le \kappa \norm[H_\eps]{  u - \Jup (\omega+1+A_0)^{-1} \Jdown (\omega+1+A_\eps) u}\\
			& \quad\quad + \delta_\eps \norm[V_0]{ (\omega+1+A_0)^{-1} \Jdown (1+\omega+A_\eps) u} \\
		& \quad \le \kappa C_{\psi^\ast} \delta_\eps \norm[H_\eps]{(\omega+1+A_\eps) u}
			+ \delta_\eps C_\theta \kappa \norm[H_0]{(\omega+1+A_\eps)u}.
	\end{align*}
	Combining the previous three estimates, we have proved~\eqref{eq:phi_est} with
	\[
		C_\phi \coloneqq C_\psi + \kappa^2 
                \Bigl( 2 + \frac{2}{\sqrt{3}} \Bigr) \norm[\infty]{\phi}
                  \bigl( C_{\psi^\ast} + C_\theta \bigr).
	\qedhere
	\]
\end{proof}

\begin{corollary}
  \label{cor:hol.calc.cor}
  If $(a_\eps)_{\eps > 0}$ converges to $a_0$ in the sense of
  \Def{quasi-uni}, then the family $(\Jdown \phi(A_\eps) \Jup)_{\eps >
    0}$ converges in operator norm to $\phi(A_0)$ (regarded as
  operators on $H_0$) for every $\phi \in H^\infty_{00}(\Sigma_\theta
  - \omega)$, $\theta \in \thetainterval$.  If merely $\phi \in
  H^\infty(\Sigma_\theta - \omega)$, then we have at least convergence
  in the strong operator topology.
\end{corollary}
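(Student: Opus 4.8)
The plan is to deduce both assertions from \Thm{hol.calc2} by conjugating its estimates with $\Jdown$ and $\Jup$ and then absorbing the remaining error by means of conditions~\eqref{A2}--\eqref{A4}.

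\emph{Norm convergence for $\psi\in H^\infty_{00}(\Sigma_\theta-\omega)$.} Composing~\eqref{eq:psi_est} with $\Jdown$ on the left and $\Jup$ on the right and invoking~\eqref{A4} gives
\begin{equation*}
  \norm[\mathscr{L}(H_0)]{\Jdown\Jup\,\psi(A_0)\,\Jdown\Jup-\Jdown\psi(A_\eps)\Jup}\le\kappa^2 C_\psi\,\delta_\eps ,
\end{equation*}
so everything reduces to the $\eps$-free bound $\norm[\mathscr{L}(H_0)]{\Jdown\Jup\,\psi(A_0)\,\Jdown\Jup-\psi(A_0)}\le C\delta_\eps$. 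The key preliminary I would record is that $\psi(A_0)\in\mathscr{L}(H_0,V_0)$ and, likewise, $\psi(A_0)^\ast\in\mathscr{L}(H_0,V_0)$: in~\eqref{eq:prim_func_calc} one has $\norm[\mathscr{L}(H_0,V_0)]{R(z,A_0)}\le C_\sigma\abs{z+\omega}^{-1/2}$ along the contour by \Lem{rest}, whereas $\abs{\psi(z)}\le\min\{K,K\abs{z+\omega}^{-\mu}\}$ for some $\mu>\tfrac12$, and $\abs{\psi(z)}\abs{z+\omega}^{-1/2}$ is integrable along $\partial(\Sigma_\sigma-\omega)$ (near the vertex since $\tfrac12<1$, near infinity since $\mu+\tfrac12>1$); the adjoint statement follows identically from the second estimate in \Lem{rest}. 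Splitting
\begin{align*}
  \Jdown\Jup\,\psi(A_0)\,\Jdown\Jup-\psi(A_0)
  &=(\Jdown\Jup-\id)\,\psi(A_0)\,\Jdown\Jup \\
  &\qquad+\psi(A_0)\,(\Jdown\Jup-\id),
\end{align*}
the first term is of order $\delta_\eps$ at once, because $\Jdown\Jup$ is bounded on $H_0$ by $\kappa^2$ (\eqref{A4}), $\psi(A_0)$ takes $H_0$ into $V_0$, and $\id-\Jdown\Jup$ is bounded $V_0\to H_0$ by $\delta_\eps$ (\eqref{A3}). The second term cannot be handled this way, since $\id-\Jdown\Jup$ is \emph{not} small on $H_0$; here I would pass to the $H_0$-adjoint, $\bigl(\psi(A_0)(\Jdown\Jup-\id)\bigr)^\ast=\bigl((\Jdown\Jup)^\ast-\id\bigr)\psi(A_0)^\ast$, and use~\eqref{A2} with~\eqref{A4} to replace $(\Jdown\Jup)^\ast=\Jup^\ast\Jdown^\ast$ by $\Jdown\Jup$ up to an error of size $2\kappa\delta_\eps$ in $\mathscr{L}(H_0)$. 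Writing $(\Jdown\Jup)^\ast-\id=\bigl((\Jdown\Jup)^\ast-\Jdown\Jup\bigr)+(\Jdown\Jup-\id)$, the first summand is controlled by this error bound and~\eqref{eq:func_calc_bound} (bounding $\psi(A_0)^\ast$ on $H_0$), the second by~\eqref{A3} and $\psi(A_0)^\ast\in\mathscr{L}(H_0,V_0)$; this gives the claim, hence $\Jdown\psi(A_\eps)\Jup\to\psi(A_0)$ in operator norm.

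\emph{Strong convergence for $\phi\in H^\infty(\Sigma_\theta-\omega)$.} By~\eqref{eq:func_calc_bound} and~\eqref{A4} the operators $\Jdown\phi(A_\eps)\Jup$ are uniformly bounded in $\mathscr{L}(H_0)$, and $\phi(A_0)\in\mathscr{L}(H_0)$, so it suffices to verify $\Jdown\phi(A_\eps)\Jup g\to\phi(A_0)g$ for $g$ in the dense subspace $D(A_0)=\ran(\omega+1+A_0)^{-1}$. For $g=(\omega+1+A_0)^{-1}h$ set $\psi(z):=\phi(z)/(\omega+1+z)$, which lies in $H^\infty_{00}(\Sigma_\theta-\omega)$ (as in the proof of \Thm{hol.calc2}), so that $\phi(A_0)g=\psi(A_0)h$ and $\phi(A_\eps)(\omega+1+A_\eps)^{-1}=\psi(A_\eps)$. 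Estimate~\eqref{downdown} of \Prp{updown} at $z=-\omega-1$ yields $\norm[H_\eps]{\Jup g-(\omega+1+A_\eps)^{-1}\Jup h}\le C\delta_\eps\norm[H_0]{h}$, so, since $\norm[\mathscr{L}(H_\eps)]{\phi(A_\eps)}\le(2+\tfrac2{\sqrt3})\norm[\infty]{\phi}$ uniformly in $\eps$,
\begin{align*}
  \Jdown\phi(A_\eps)\Jup g
  &=\Jdown\phi(A_\eps)(\omega+1+A_\eps)^{-1}\Jup h+\err(1) \\
  &=\Jdown\psi(A_\eps)\Jup h+\err(1)\;\longrightarrow\;\psi(A_0)h=\phi(A_0)g
\end{align*}
by the operator-norm convergence just proved for $H^\infty_{00}$; a standard $\eta/3$ argument then upgrades this to convergence in the strong operator topology on all of $H_0$.

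\emph{Main obstacle.} The genuinely delicate point is the ``right-hand'' error $\psi(A_0)(\Jdown\Jup-\id)$: conditions~\eqref{A3} and~\eqref{A5} control $\Jdown\Jup-\id$ only on the form domain $V_0$, not on $H_0$, so exploiting the quasi-adjointness~\eqref{A2} to trade $(\Jdown\Jup)^\ast$ for $\Jdown\Jup$ is indispensable; and keeping the resulting terms summable along the defining contour really needs the decay exponent $\mu>\tfrac12$ built into $H^\infty_{00}$---which is exactly why operator-norm convergence is claimed only there, while for general $\phi\in H^\infty$ one must settle for strong convergence.
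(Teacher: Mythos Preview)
Your argument is correct, but it takes a genuinely different route from the paper's.

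The paper's proof is a one-line observation: the conditions in \Def{quasi-uni} are symmetric under the swap $(a_0,H_0,V_0,\Jup,\Jup_1)\leftrightarrow(a_\eps,H_\eps,V_\eps,\Jdown,\Jdown_1)$, so re-running the entire proof of \Thm{hol.calc2} with the roles of the two forms interchanged immediately produces
\[
  \norm[\mathscr{L}(H_0)]{\psi(A_0)-\Jdown\psi(A_\eps)\Jup}\le C_\psi\delta_\eps
  \quad\text{and}\quad
  \norm[H_0]{\phi(A_0)f-\Jdown\phi(A_\eps)\Jup f}\le C_\phi\delta_\eps\norm[H_0]{(\omega+1+A_0)f}
\]
for $f\in D(A_0)$; strong convergence then follows from density and the uniform bound from~\eqref{A4} and~\eqref{eq:func_calc_bound}.

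You instead keep \Thm{hol.calc2} as stated, conjugate~\eqref{eq:psi_est} by $\Jdown$ and $\Jup$, and are then forced to control the residual term $\Jdown\Jup\,\psi(A_0)\,\Jdown\Jup-\psi(A_0)$ by hand. This you do correctly via the mapping property $\psi(A_0),\psi(A_0)^\ast\in\mathscr{L}(H_0,V_0)$ (from \Lem{rest} and the decay $\mu>\tfrac12$), condition~\eqref{A3} on $V_0$, and the quasi-adjointness~\eqref{A2} to trade $(\Jdown\Jup)^\ast$ for $\Jdown\Jup$. For the strong part you invoke~\eqref{downdown} of \Prp{updown} to pass from $\Jup g$ to $(\omega+1+A_\eps)^{-1}\Jup h$, then reduce to the $H^\infty_{00}$ case already settled.

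What each approach buys: the paper's symmetry trick is much shorter, but its validity hinges on~\eqref{A5} being invariant under the swap, and for non-symmetric forms the swapped condition reads $\bigabs{a_\eps(u,\Jup_1 f)-a_0(\Jdown_1 u,f)}\le\delta_\eps\norm[V_\eps]{u}\norm[V_0]{f}$, which is not literally the original~\eqref{A5}; one must in effect redo \Prp{updown} to see that the argument still closes. Your route is longer but entirely self-contained: it derives everything from the version of \Thm{hol.calc2} already proved, using only~\eqref{A2}--\eqref{A4} and \Lem{rest}, without any appeal to symmetry of the hypotheses.
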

\begin{proof}
  Since all conditions in Definition~\ref{def:quasi-uni} are symmetric
  with respect to $a_0$ and $a_\eps$, interchanging the roles of the
  two forms we obtain as in Theorem~\ref{thm:hol.calc2} that
  \[
  \norm[\mathscr{L}(H_0)]{\psi(A_0) - \Jdown \psi(A_\eps) \Jup} \le
  C_\psi \delta_\eps
  \]
  for $\psi \in H^\infty_{00}(\Sigma_\theta - \omega)$, which proves
  the first claim.  Similarly,
  \[
  \norm[\mathscr{L}(H_0)]{\phi(A_0)f - \Jdown \phi(A_\eps) \Jup f} \le
  C_\phi \delta_\eps \norm[H_0]{(\omega+1+A_0) f}
  \]
  for all $f \in D(A_0)$ if $\phi \in H^\infty(\Sigma_\theta -
  \omega)$, implying that $(\Jdown \phi(A_\eps) \Jup)_{\eps > 0}$
  converges to $\phi(A_0)$ on a dense subspace of $H_0$.  Since the
  operators are uniformly bounded by~\eqref{A4}
  and~\eqref{eq:func_calc_bound}, this implies strong convergence.
\end{proof}

\begin{example}
  \label{ex:semigroups}
  The function $z \mapsto \e^{-tz}$ is in $H^\infty_{00}(\Sigma_\theta
  - \omega)$ for $\theta \in (0,\tfrac{\pi}{2}-\sigma)$ and for all $t
  \in \Sigma_\sigma$, $\sigma \in (0,\tfrac{\pi}{2})$.  Hence the
  semigroups $(\e^{-tA_\eps})_{\eps > 0}$ converge to $\e^{-tA_0}$ in
  operator norm (in the sense of \Thm{hol.calc2} and
  \Cor{hol.calc.cor}) for $t$ in the common sector of holomorphy of
  the semigroups, i.e., for every fixed $t \in \Sigma_\sigma$, where
  $\sigma \coloneqq \tfrac{\pi}{2} - \arctan\tfrac{M}{\alpha}$.

  Note, however, that we cannot expect this for $t=0$ since typically
  $\Jup \Jdown$ does not tend to the identity in operator norm even if
  $H_\eps = H_0$ for all $\eps \ge 0$. Thus we cannot expect uniform
  convergence near $t=0$. However, the explicit constant $C_\psi$ in
  the proof of Theorem~\ref{thm:hol.calc2} shows that the convergence
  is uniform on compact subsets of $\Sigma_\sigma$.
\end{example}

\subsection{Spectral convergence}\label{sec:spectral_conv}

It has been proven in Sec.~A.5 of~\cite{post:06} that if a family of
non-negative forms $(a_\eps)_{\eps > 0}$ converges to $a_0$ in the
sense of \Def{quasi-uni}, then the spectra of the associated operators
$\spec{A_\eps}$ converge to $\spec{A_0}$. In~\cite{post:pre11}, a
similar result for certain non-self-adjoint operators arising in the
treatment of resonances via complex scaling are considered.  Here we
prove that a similar result is true in the general (m-sectorial) case,
where the spectra need not to be real.  This is a part of the
justification why we regard our notion of convergence as a reasonable
generalisation of the classical resolvent convergence (see also
\Ex{classical}).

We consider the following notion of spectral convergence, which is
quite natural.  It is often called ``upper semi-continuity'' of the
spectrum.  This type of convergence is precisely what we obtain if in
a fixed Hilbert space we have a family of operators whose resolvents
converge in operator norm, see Theorem~IV.3.1 of~\cite{Kato95}.

\begin{definition}
  \label{def:spec_conv}
  We say that \emph{the spectra $\spec{A_\eps}$ of the family
    $(A_\eps)_{\eps > 0}$ converge to the spectrum $\sigma(A_0)$ of
    $A_0$ as $\eps\to 0$} if for each compact set $K \subset
  \res{A_0}$ there exists $\eps_1 > 0$ such that $K \subset
  \res{A_\eps}$ for all $\eps \in (0,\eps_1)$.
\end{definition}

Ideally, we could hope that the spectra $\spec{A_\eps}$ converge to
$\spec{A_0}$ if $(a_\eps)_{\eps > 0}$ converges to $a_0$.  In fact,
this is true if in addition $\res{A_0}$ is connected, see
Corollary~\ref{cor:conn_spec}.

We start with an auxiliary lemma, allowing us to estimate the resolvent of $A_\eps$
if we have a priori information about the resolvent of $A_0$.
For the whole section, the operators $(A_\eps)_{\eps > 0}$ are assumed
to satisfy the conditions in Section~\ref{sec:general}.

  \newcommand{\resconst}{\ell}
\begin{lemma}\label{lem:res_est_sector}
  For every $\resconst > 0$ and
  $r > 0$ there exist $\delta_0 = \delta_0(\resconst,r) > 0$ and $L =
  L(\resconst,r) > 0$ with the following property: if $a_\eps$ is
  $\delta_\eps$-$\kappa$-quasi-unitarily equivalent to $a_0$ for some
  $\delta_\eps \in (0,\delta_0]$, if $z \in \res{A_0} \cap
  \res{A_\eps} \cap B(0,r)$, and if $\norm[\mathscr{L}(H_0)]{R(z,A_0)}
  \le \resconst$, then $\norm[\mathscr{L}(H_\eps)]{R(z,A_\eps)} \le L$.
\end{lemma}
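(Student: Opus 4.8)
The plan is to reduce everything, via a resolvent identity, to the comparison estimate~\eqref{updown} of \Prp{updown}; that estimate is available only for $z$ outside the sector $\Sigma_\theta-\omega$, while the $z$ in the statement may lie inside it, and this is the only genuine difficulty.

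Fix $\theta\in\thetainterval$ and put $\zeta_0:=-\omega-s$ with $s:=r+\abs\omega+1$; then $\zeta_0+\omega=-s$ is a negative real with $\abs{\zeta_0+\omega}=s\ge r$ and $\abs{\zeta_0}>r$, so $\zeta_0\notin\Sigma_\theta-\omega$ and hence $\zeta_0\in\res{A_\eps}$ for every $\eps\ge0$, with the uniform bounds $\norm[\mathscr L(H_\eps)]{R(\zeta_0,A_\eps)}\le D_\theta/s$ from~\eqref{eq:resolvent_est}, $\norm[\mathscr L(H_\eps,V_\eps)]{R(\zeta_0,A_\eps)}\le C_\theta/\sqrt s$ from \Lem{rest}, and $\norm[\mathscr L(H_\eps)]{R(\zeta_0,A_\eps)-\Jup R(\zeta_0,A_0)\Jdown}\le C\delta_\eps$ from \Prp{updown}, where $C$ is independent of $\eps$. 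Two further facts about the point $z$ itself are needed. First, the coercivity computation at the beginning of the proof of \Lem{rest} uses only~\eqref{eq:form_coercive}, not sectoriality, and therefore upgrades the hypothesis $\norm[\mathscr L(H_0)]{R(z,A_0)}\le\resconst$ to $\norm[\mathscr L(H_0,V_0)]{R(z,A_0)}\le\resconst_V$ with $\resconst_V=\resconst_V(\resconst,r)$. Second, writing $\mu:=z-\zeta_0$ (so $\abs\mu\le r+\abs\omega+s$), the elementary identity $R(z,A)=R(\zeta_0,A)\bigl(\id+\mu R(\zeta_0,A)\bigr)^{-1}$ together with the relation $\bigl(\id+\mu R(\zeta_0,A_0)\bigr)^{-1}=\id-\mu R(z,A_0)$ shows that $T_0:=\id+\mu R(\zeta_0,A_0)$ is invertible on $H_0$ with $\norm{T_0^{-1}}\le 1+\abs\mu\resconst$.

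The core of the argument is to prove that $\tilde T_\eps:=\id+\mu\,\Jup R(\zeta_0,A_0)\Jdown$ is invertible on $H_\eps$ with $\norm{\tilde T_\eps^{-1}}$ bounded in terms of $\resconst$ and $r$ only. For this I use $U:=\id-\mu\,\Jup R(z,A_0)\Jdown=\Jup T_0^{-1}\Jdown+(\id-\Jup\Jdown)$ as an approximate inverse. A short computation of $U\tilde T_\eps$ and $\tilde T_\eps U$ leaves, besides $\id$ and first-order terms that cancel in pairs, a single second-order term in which $\Jdown\Jup$ occurs squeezed against one of the resolvents $R(\zeta_0,A_0)$ or $R(z,A_0)$; since these take values in $V_0$ and $\norm[\mathscr L(V_0,H_0)]{\id-\Jdown\Jup}\le\delta_\eps$ by~\eqref{A3}, replacing $\Jdown\Jup$ by $\id$ there costs only a term of order $\delta_\eps$ (this is where the bounds $C_\theta/\sqrt s$ and $\resconst_V$ for the resolvents as maps \emph{into} $V_0$ enter), after which the resolvent identity $R(z,A_0)R(\zeta_0,A_0)=\mu^{-1}\bigl(R(\zeta_0,A_0)-R(z,A_0)\bigr)$ makes the remaining second-order term cancel the first-order ones. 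Thus $U\tilde T_\eps=\id+F$ and $\tilde T_\eps U=\id+\tilde F$ with $\norm F,\norm{\tilde F}\le K_1\delta_\eps$, $K_1=K_1(\resconst,r)$. Choosing $\delta_0>0$ with $K_1\delta_0<\tfrac12$ then forces $\tilde T_\eps$ to be invertible (bounded below by $U\tilde T_\eps=\id+F$, of full range by $\tilde T_\eps U=\id+\tilde F$) with $\norm{\tilde T_\eps^{-1}}\le 2\norm U\le 2\bigl(1+\kappa^2+\kappa^2\norm{T_0^{-1}}\bigr)=:K_2(\resconst,r)$.

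It remains to pass from $\tilde T_\eps$ to $T_\eps:=\id+\mu R(\zeta_0,A_\eps)$ and then to the resolvent. Since $\norm{T_\eps-\tilde T_\eps}=\abs\mu\,\norm{R(\zeta_0,A_\eps)-\Jup R(\zeta_0,A_0)\Jdown}\le\abs\mu\,C\delta_\eps$, shrinking $\delta_0$ once more so that $2K_2\abs\mu\,C\delta_0<1$ makes $T_\eps$ invertible with $\norm{T_\eps^{-1}}\le 2K_2$, and then, by the identity of the second paragraph, $\norm[\mathscr L(H_\eps)]{R(z,A_\eps)}=\norm[\mathscr L(H_\eps)]{R(\zeta_0,A_\eps)\,T_\eps^{-1}}\le(D_\theta/s)\cdot 2K_2=:L$, which depends only on $\resconst$ and $r$. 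I expect the main obstacle to be precisely the cancellation in the third paragraph: a naive attempt to conjugate the resolvent bound for $A_0$ by $\Jup$ and $\Jdown$ fails because $\Jup\Jdown$ and $\Jdown\Jup$ are \emph{not} close to the identity in operator norm on $H_\eps$, respectively $H_0$ --- they are close only on the form domains --- so the argument must be organised so that every occurrence of these products is applied to the range of a resolvent, which is exactly why one shifts to the sectorial-complement point $\zeta_0$ and why the upgrade $\norm[\mathscr L(H_0,V_0)]{R(z,A_0)}\le\resconst_V$ is required.
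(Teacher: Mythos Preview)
Your proof is correct, but it follows a different route from the paper's. The paper obtains a \emph{self-referential} inequality: writing $V(z):=\Jdown R_\eps(z)-R_0(z)\Jdown$ and using the resolvent identity to express $V(z)$ in terms of $V(-\omega-1)$ (which is $\Err(\delta_\eps)$ by \Prp{updown}), one decomposes $R_\eps(z)=(\id-\Jup\Jdown)R_\eps(z)+\Jup V(z)+\Jup R_0(z)\Jdown$ and, together with the coercivity bound~\eqref{eq:RHVest} for $\norm[\mathscr L(H_\eps,V_\eps)]{R_\eps(z)}$, arrives at
\[
\norm{R_\eps(z)}\le \ell_1+\delta_\eps c\,\norm{R_\eps(z)},
\]
from which the claim follows for $\delta_\eps\le 1/(2c)$.

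You instead recast the question via the spectral mapping $R(z,A)=R(\zeta_0,A)\bigl(\id+\mu R(\zeta_0,A)\bigr)^{-1}$ and prove invertibility of $T_\eps=\id+\mu R(\zeta_0,A_\eps)$ by a two-step Neumann-series argument: first show $\wt T_\eps=\id+\mu\Jup R(\zeta_0,A_0)\Jdown$ is invertible using $U=\id-\mu\Jup R(z,A_0)\Jdown$ as an approximate inverse (the cancellation you describe is exactly right, and the need for the $V_0$-bound $\ell_V$ arises only in $\wt T_\eps U$), then perturb to $T_\eps$ via~\eqref{updown}. Both arguments hinge on the same two ingredients --- the comparison at a sectorial-complement point and the upgrade of an $H$-resolvent bound to a $V$-resolvent bound via coercivity --- but the paper's bootstrap is shorter and yields the constants more directly, while your approach has the pleasant side effect of \emph{deriving} $z\in\res{A_\eps}$ rather than assuming it, which in fact anticipates the connectedness argument in \Thm{spec_conv}.
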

\begin{proof}
	For $z \in \res{A_0} \cap \res{A_\eps}$ we define
	\[
		V(z) \coloneqq \Jdown R_\eps(z) - R_0(z) \Jdown.
	\]
	Let $z$ and $z_0$ be in $\res{A_0} \cap \res{A_\eps}$.
	Then by the resolvent identity we have
	\[
		\bigl( R_0(z_0) - R_0(z) \bigr) \Jdown R_\eps(z) R_\eps(z_0) = R_0(z) R_0(z_0) \Jdown \bigl( R_\eps(z_0) - R_\eps(z) \bigr)
	\]
	and thus
	\[
		R_0(z_0) V(z) R_\eps(z_0) = R_0(z) V(z_0) R_\eps(z).
	\]
	Hence
	\begin{align*}
		V(z) & = (z_0 - A_0) R_0(z) V(z_0) R_\eps(z) (z_0 - A_\eps) \\
			& = (\id + (z_0 - z) R_0(z)) V(z_0) (\id + (z_0 - z) R_\eps(z))
	\end{align*}
	on $D(A_\eps)$ and thus on $H_\eps$ by density.
	Setting $z_0 \coloneqq -\omega - 1$ and using the dual
	version of~\eqref{downdown}, which follows by exchanging the roles
	of $A_\eps$ and $A_0$ in Proposition~\ref{prp:updown}, to estimate $V(z_0)$
	we deduce that
	\begin{equation}\label{eq:Vest}
		\norm[\mathscr{L}(H_\eps,H_0)]{V(z)}
			\le \delta_\eps C_{\theta,1,1} \bigl( 1 + \resconst \abs{\omega + 1 + z} \bigr) \bigl( 1 + \abs{\omega + 1 + z} \; \norm[\mathscr{L}(H_\eps)]{R_\eps(z)} \bigr).
	\end{equation}

	Next, we note that for all $u \in H_\eps$
	\begin{align*}
		\normsqr[a_\eps]{R_\eps(z)u}
			& = \iprod[H_\eps]{(\omega + A_\eps) R_\eps(z)u}{R_\eps(z)u} \\
			& \le \bigl( \norm[H_\eps]{u} + \abs{\omega + z} \; \norm[H_\eps]{R_\eps(z)u} \bigr) \norm[H_\eps]{R_\eps(z)u},
	\end{align*}
	proving by~\eqref{eq:equiv_norm} that
	\begin{equation}\label{eq:RHVest}
	\begin{aligned}
		\normsqr[\mathscr{L}(H_\eps,V_\eps)]{R_\eps(z)}
			& \le \frac{1}{\alpha} \bigl( 1 + \abs{\omega + z} \; \norm[\mathscr{L}(H_\eps)]{R_\eps(z)} \bigr) \norm[\mathscr{L}(H_\eps)]{R_\eps(z)} \\
			& \le \frac{1}{\alpha} \bigl( 1 + \beta \norm[\mathscr{L}(H_\eps)]{R_\eps(z)} \bigr)^2
	\end{aligned}
	\end{equation}
	with $\beta \coloneqq \max\{ 1, \abs{\omega + z} \}$.

	Now write
	\[
		R_\eps(z) = \bigl( \id - \Jup \Jdown \bigr) R_\eps(z) + \Jup \bigl( \Jdown R_\eps(z) - R_0(z) \Jdown \bigr) + \Jup R_0(z) \Jdown.
	\]
	This representation, combined with~\eqref{eq:Vest} and~\eqref{eq:RHVest}, shows that
	\begin{align*}
		\norm[\mathscr{L}(H_\eps)]{R_\eps(z)}
			& \le \delta_\eps \norm[\mathscr{L}(H_\eps,V_\eps)]{R_\eps(z)} + \kappa \norm[\mathscr{L}(H_\eps)]{V(z)} + \kappa^2 \resconst \\
			& \le \Bigl( \frac{\delta_\eps}{\sqrt{\alpha}} + \kappa \delta_\eps C_{\theta,1,1} \bigl(1 + \resconst \abs{\omega+1+z}\bigr) + \kappa^2 \resconst \Bigr) \\
			& \qquad + \delta_\eps \Bigl( \frac{\beta}{\sqrt{\alpha}} + C_{\theta,1,1} \bigl(1 + \resconst \abs{\omega+1+z}\bigr) \, \abs{\omega+1+z} \Bigr) \norm[\mathscr{L}(H_\eps)]{R_\eps(z)} \\
			& \eqqcolon \resconst_1 + \delta_\eps c \norm[\mathscr{L}(H_\eps)]{R_\eps(z)}.
	\end{align*}
	Thus, if $\delta_\eps \in (0,\delta_0]$ with $\delta_0 \coloneqq \frac{1}{2c}$,
	then
	\[
		\frac{1}{2} \norm[\mathscr{L}(H_\eps)]{R_\eps(z)} \le (1 - \delta_\eps c) \norm[\mathscr{L}(H_\eps)]{R_\eps(z)} \le \resconst_1,
	\]
	i.e., we have proved the claim with $L \coloneqq 2\resconst_1$.
\end{proof}

Now we come to our main theorem regarding convergence of the spectrum.

\begin{theorem}
  \label{thm:spec_conv}
  Let $A_0$ be an m-sectorial operator with vertex $\omega$,
  semi-angle $\theta$ and associated form $a_0$.  Let $K \subset
  \res{A_0}$ be compact and connected. Then there exist constants
  $\delta_0 > 0$ and $C_{\theta,K}, D_{\theta,K} \ge 0$ with the
  following property: if $a_\eps$ is
  $\delta_\eps$-$\kappa$-quasi-unitarily equivalent to $a_0$ for
  $\delta_\eps \in (0,\delta_0]$, if $(a_\eps)_\eps$ is
  equi-elliptic, and if in addition $K \cap \res{A_\eps} \neq
  \emptyset$, then $K \subset \res{A_\eps}$,
  \begin{equation}
    \label{eq:res_est_sec_pre}
    \norm[\mathscr{L}(H_\eps,H_0)]{\Jdown R(z,A_\eps) - R(z,A_0) \Jdown} \le C_{\theta,K} \delta_\eps
  \end{equation}
  and
  \begin{equation}\label{eq:res_est_sec}
    \norm[\mathscr{L}(H_\eps)]{\Jup R(z,A_0) \Jdown - R(z,A_\eps)} \le D_{\theta,K} \delta_\eps
  \end{equation}
  for all $z \in K$.
\end{theorem}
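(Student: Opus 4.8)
The plan is to combine the a priori resolvent bound of \Lem{res_est_sector} with a connectedness argument in order to deduce $K \subset \res{A_\eps}$ together with a uniform bound on $\norm[\mathscr{L}(H_\eps)]{R(z,A_\eps)}$ over $K$, and then to transfer the comparison estimates \eqref{downdown} and \eqref{updown} --- which \Prp{updown} only supplies \emph{outside} the shifted sector --- to all of $K$ by means of the resolvent identity. Concretely, first fix $r > 0$ with $K \subset B(0,r)$ and set $\ell \coloneqq \sup_{z \in K}\norm[\mathscr{L}(H_0)]{R(z,A_0)}$, which is finite since $z \mapsto R(z,A_0)$ is continuous on the compact set $K \subset \res{A_0}$; let $\delta_0 = \delta_0(\ell,r)$ and $L = L(\ell,r)$ be the constants from \Lem{res_est_sector} and assume $\delta_\eps \in (0,\delta_0]$. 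Consider $S \coloneqq \set{z \in K}{z \in \res{A_\eps}}$, which is nonempty by hypothesis and relatively open in $K$ because $\res{A_\eps}$ is open. To see that $S$ is also relatively closed, take $z_n \in S$ with $z_n \to z \in K$: then $z_n \in \res{A_0} \cap \res{A_\eps} \cap B(0,r)$ with $\norm[\mathscr{L}(H_0)]{R(z_n,A_0)} \le \ell$, so \Lem{res_est_sector} gives $\norm[\mathscr{L}(H_\eps)]{R(z_n,A_\eps)} \le L$, and for $n$ large enough $\abs{z - z_n} < 1/L \le 1/\norm[\mathscr{L}(H_\eps)]{R(z_n,A_\eps)}$, so the Neumann series about $z_n$ shows $z \in \res{A_\eps}$, i.e.\ $z \in S$. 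As $K$ is connected, $S = K$; thus $K \subset \res{A_\eps}$, and \Lem{res_est_sector} applied at each $z \in K$ yields the uniform bound $\norm[\mathscr{L}(H_\eps)]{R(z,A_\eps)} \le L$ for all $z \in K$.

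For the comparison estimates, set $z_0 \coloneqq -\omega - 1 \in \res{A_0} \cap \res{A_\eps}$ and $V(z) \coloneqq \Jdown R(z,A_\eps) - R(z,A_0)\Jdown$ as in the proof of \Lem{res_est_sector}. Since $z_0 \notin \Sigma_\theta - \omega$ and $\abs{z_0 + \omega} = 1$, the dual version of \eqref{downdown} (obtained by exchanging the roles of $A_\eps$ and $A_0$ in \Prp{updown}) gives $\norm[\mathscr{L}(H_\eps,H_0)]{V(z_0)} \le \delta_\eps C_{\theta,1,1}$. Feeding this into the resolvent-identity representation
\[
  V(z) = \bigl(\id + (z_0 - z)R(z,A_0)\bigr)\,V(z_0)\,\bigl(\id + (z_0 - z)R(z,A_\eps)\bigr),
\]
which is valid on all of $H_\eps$, and using $\norm[\mathscr{L}(H_0)]{R(z,A_0)} \le \ell$, $\norm[\mathscr{L}(H_\eps)]{R(z,A_\eps)} \le L$ and $\abs{z_0 - z} \le 1 + \abs\omega + r$ for $z \in K$, one obtains \eqref{eq:res_est_sec_pre} with a constant $C_{\theta,K}$ depending only on $\theta$, on $K$ (through $\ell$ and $r$) and on the equi-ellipticity data. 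Then for \eqref{eq:res_est_sec} I would invoke \Lem{clar} with $B_\eps = R(z,A_\eps)$ and $B_0 = R(z,A_0)$, bounding $\norm[\mathscr{L}(H_\eps)]{R(z,A_\eps) - \Jup R(z,A_0)\Jdown}$ by $\kappa\norm[\mathscr{L}(H_\eps,H_0)]{V(z)} + \delta_\eps\norm[\mathscr{L}(H_\eps,V_\eps)]{R(z,A_\eps)}$; the first term is controlled by the estimate just obtained, and the second uniformly over $K$ by estimate \eqref{eq:RHVest} (from the proof of \Lem{res_est_sector}) together with $\norm[\mathscr{L}(H_\eps)]{R(z,A_\eps)} \le L$, which produces the constant $D_{\theta,K}$.

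The main obstacle is not the continuation mechanism itself --- that is the classical route to upper semicontinuity of the spectrum --- but the fact that \Prp{updown} only yields resolvent convergence in the \emph{exterior} of the shifted sector $\Sigma_\theta - \omega$, whereas $K$ will in general meet its interior (for instance inside a bounded hole of $\spec{A_0}$). The a priori bound of \Lem{res_est_sector}, itself obtained by propagating the single good point $z_0 = -\omega - 1$ through the resolvent identity, is precisely what promotes a seed point $K \cap \res{A_\eps} \neq \emptyset$ into a uniform resolvent bound over all of $K$; this is also where the hypothesis $K \cap \res{A_\eps} \neq \emptyset$ is genuinely needed, since otherwise $\spec{A_\eps}$ could fill such a hole. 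Once the uniform bound is in hand, the remaining work is bookkeeping with the resolvent identity and the \Lems{clar}{rest}-type estimates.
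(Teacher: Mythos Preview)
Your proposal is correct and follows essentially the same route as the paper's proof: the connectedness argument via \Lem{res_est_sector} to obtain $K\subset\res{A_\eps}$ and the uniform bound $L$, then the resolvent-identity propagation of $V(z_0)$ at $z_0=-\omega-1$ (which is exactly estimate~\eqref{eq:Vest} from the proof of \Lem{res_est_sector}) to get~\eqref{eq:res_est_sec_pre}, and finally \Lem{clar} together with~\eqref{eq:RHVest} for~\eqref{eq:res_est_sec}. The only cosmetic difference is that you spell out the Neumann-series step and rewrite the $V(z)$ identity explicitly, whereas the paper simply cites~\eqref{eq:Vest}.
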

Note that $C_{\theta,K}, D_{\theta,K} \ge 0$ also depend on $A_0$ and,
as usual, on the constants of equi-ellipticity (see \Def{equi_sec}).
\begin{proof}
  Since $K$ is compact, $K \subset B(0,r)$ for some $r > 0$ and
  \[
  \resconst \coloneqq \sup_{z \in K}
  \norm[\mathscr{L}(H_\eps)]{R_0(z)} < \infty.
  \]
  Choose $\delta_0 = \delta_0(\resconst,r,\omega)$ as in
  Lemma~\ref{lem:res_est_sector}.  Let $\delta_\eps \in (0,\delta_0)$
  and let $a_\eps$ be $\delta_\eps$-$\kappa$-quasi-unitarily
  equivalent to $a_0$. Let $K_0 \coloneqq \res{A_\eps} \cap K$, which
  is non-empty by assumption.  Since $\res{A_\eps}$ is open, the set
  $K_0$ is relatively open in $K$.

  Let $(z_n)$ be a sequence in $K_0$ converging to $z \in K$.  Then
  from Lemma~\ref{lem:res_est_sector} we know that
  $\norm[\mathscr{L}(H_\eps)]{R_\eps(z_n)}$ is bounded, hence $z \in
  \res{A_\eps}$.  We have shown that $K_0$ is closed in $K$. Since $K$
  is connected, $K_0 = K$, i.e., $K \subset \res{A_\eps}$.

	Since by Lemma~\ref{lem:res_est_sector} we have $\norm[\mathscr{L}(H_\eps)]{R_\eps(z)} \le L$
	for some $L > 0$, it follows from~\eqref{eq:Vest} that
	\[
		\norm[\mathscr{L}(H_\eps,H_0)]{\Jdown R_\eps(z) - R_0(z) \Jdown}
			\le \delta_\eps C_{\theta,1,1} \bigl( 1 + \resconst (\abs{\omega} + 1 + r) \bigr) \bigl( 1 + L (\abs{\omega} + 1 + r) \bigr).
	\]
	for all $z \in K$. This is~\eqref{eq:res_est_sec_pre} for
	\[
		C_{\theta,K} \coloneqq C_{\theta,1,1} \bigl( 1 + \resconst (\abs{\omega} + 1 + r) \bigr) \bigl( 1 + L (\abs{\omega} + 1 + r) \bigr)
	\]
	Now~\eqref{eq:res_est_sec} follows from Lemma~\ref{lem:clar} and estimate~\eqref{eq:RHVest}
	with
	\[
		D_{\theta,K} \coloneqq \kappa C_{\theta,K} + \frac{1}{\sqrt{\alpha}} \bigl( 1 + \beta L \bigr).
	\qedhere
	\]
\end{proof}

\begin{remark}
  It can be difficult to check the condition $K \cap \res{A_\eps} \neq
  \emptyset$ of the previous theorem. On the other hand, in the
  classical situation, i.e., if $H_\eps = H_0$ for all $\eps \ge 0$
  and $R(z,A_\eps)$ converges to $R(z,A_0)$ in operator norm, this is
  automatically fulfilled by the fact that the set of invertible
  operators is open in $\mathscr{L}(H_0)$:

  Let $\lambda \in \res{A_0} \cap K$ and $\mu < -\omega$ such that
  $\lambda \neq \mu$.  Then $R_\eps(\mu)$ converges in operator norm
  to $R_0(\mu)$, since $\mu$ is outside the sector $\Sigma_\theta$.
  Moreover, $\lambda \in \res{A_\eps}$ is equivalent with the
  invertibility of $\frac{1}{\mu - \lambda} - R_\eps(\mu)$ by the
  spectral mapping theorem.  For the same reason, $\frac{1}{\mu -
    \lambda} - R_0(\mu)$ is invertible.  Since the set of invertible
  operators is open, $\lambda \in \res{A_\eps}$ for sufficiently small
  $\eps$.
\end{remark}

If the resolvent set is connected, a given compact set $K \subset
\res{A_0}$ can be enlarged to a connected compact set $K' \subset
\res{A_0}$ in such a way that we can guarantee $K' \cap \res{A_\eps}
\neq \emptyset$, so that the theorem applies.  We make this explicit
in the following corollary.  Note that in particular if the spectrum
is real or discrete, the resolvent set is connected.  Hence for
self-adjoint operators and operators with compact resolvent we obtain
spectral convergence.

\begin{corollary}
  \label{cor:conn_spec}
  Assume that $\res{A_0}$ is connected and that $(a_\eps)_{\eps > 0}$
  converges to $a_0$ in the sense of Definition~\ref{def:quasi-uni}.
  Then $\spec{A_\eps}$ converges to $\spec{A_0}$ in the sense of
  Definition~\ref{def:spec_conv}.
\end{corollary}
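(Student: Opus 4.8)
The plan is to deduce the corollary from \Thm{spec_conv} after replacing the given compact set by a connected one that automatically meets every resolvent set $\res{A_\eps}$ for small $\eps$. The first ingredient is a point lying in all the resolvent sets at once: since $(a_\eps)_{\eps \ge 0}$ is equi-elliptic, estimate~\eqref{eq:resolvent_est} gives $\spec{A_\eps} \subseteq -\omega + \overline{\Sigma_{\theta_0}}$ for every $\eps \ge 0$, where $\theta_0 = \arctan(M/\alpha) < \tfrac{\pi}{2}$; as every element of $\overline{\Sigma_{\theta_0}}$ has nonnegative real part, any fixed real number $\mu < -\omega$ satisfies $\mu \in \bigcap_{\eps \ge 0} \res{A_\eps}$, and in particular $\mu \in \res{A_0}$.

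Next I would carry out the topological enlargement. Let $K \subseteq \res{A_0}$ be compact. Since $\res{A_0}$ is open and connected in $\C$, it is path-connected. By compactness of $K$, cover it by finitely many open balls $B_1,\dots,B_n$ with $\overline{B_i} \subseteq \res{A_0}$, pick $p_i \in B_i$, and join each $p_i$ to $\mu$ by a path $\gamma_i \subseteq \res{A_0}$. Set $K' \coloneqq \{\mu\} \cup \bigcup_{i=1}^n \bigl(\overline{B_i}\cup\gamma_i\bigr)$. Each set $\overline{B_i}\cup\gamma_i$ is connected and contains $\mu$, so $K'$ is connected; it is a finite union of compact sets, hence compact; and $K \subseteq \bigcup_i B_i \subseteq K' \subseteq \res{A_0}$ with $\mu \in K'$.

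Finally I would apply \Thm{spec_conv} to the compact connected set $K'$. That theorem furnishes $\delta_0 > 0$ (depending on $K'$, on $A_0$, and on the equi-ellipticity constants) such that, whenever $a_\eps$ is $\delta_\eps$-$\kappa$-quasi-unitarily equivalent to $a_0$ with $\delta_\eps \in (0,\delta_0]$, $(a_\eps)_\eps$ is equi-elliptic, and $K' \cap \res{A_\eps} \ne \emptyset$, one has $K' \subseteq \res{A_\eps}$. By hypothesis $(a_\eps)_{\eps>0}$ converges to $a_0$ in the sense of \Def{quasi-uni}, so $\delta_\eps \to 0$; choose $\eps_1 > 0$ with $\delta_\eps \le \delta_0$ for all $\eps \in (0,\eps_1)$. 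Since $\mu \in K' \cap \res{A_\eps}$ by the first step, the nonempty-intersection hypothesis holds, so $K' \subseteq \res{A_\eps}$, and a fortiori $K \subseteq \res{A_\eps}$, for all $\eps \in (0,\eps_1)$. This is exactly the assertion of \Def{spec_conv}.

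The one mildly delicate point — the rest being a direct invocation of \Thm{spec_conv} together with $\delta_\eps \to 0$ — is the elementary topology of the second step: one must enlarge $K$ to a \emph{connected compact} subset of the open connected set $\res{A_0}$ in such a way that the enlargement still avoids $\spec{A_\eps}$ for all small $\eps$, and this is precisely what routing the connecting paths through the common resolvent point $\mu$ achieves.
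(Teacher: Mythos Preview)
Your proof is correct and follows essentially the same strategy as the paper: enlarge $K$ to a connected compact $K'\subseteq\res{A_0}$ containing a point (the paper uses $-\omega-1$, you use any $\mu<-\omega$) that lies in every $\res{A_\eps}$ by equi-ellipticity, then invoke \Thm{spec_conv}. Your topological construction of $K'$ via finitely many closed balls and connecting paths through $\mu$ is in fact a bit cleaner than the paper's, which builds $K'$ through an auxiliary cover by connected components of certain level sets before joining them by paths; both arguments exploit the same fact that open connected subsets of $\C$ are path-connected.
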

\begin{proof}
	Let $K \subset \res{A_0}$ be compact. Since $\res{A_0}$ is connected, there exists
	a connected compact set $K' \subset \res{A_0}$ such that $K \subset K'$ and
	$-\omega - 1 \in K'$. In fact, let $R > \abs{\omega} + 1$ be such that $K \subset B(0,R)$,
	and let $(O_{\rho,\mu})_\mu$ denote the family of (open) connected components of the open set
	\[
		O_\rho \coloneqq \bigset{z \in \res{A_0} \cap B(0,R)}
                                        {\dist(z,\spec{A_0}) < \rho}.
	\]
	Then
	\[
		K \cup \{ -\omega - 1 \} \subset \bigcup\nolimits_{\rho, \mu} O_{\rho,\mu},
	\]
	and hence by compactness there exist a finite subcover $(O_{\rho_i,\mu_i})$.
	Let $K''$ the be the union of the
	compact, connected sets $\overline{O}_{\rho_i,\mu_i} \subset \res{A_0}$.
	Now $K''$ has only finitely many connected components. Since $\res{A_0}$
	is arcwise connected, we can join these connected components by finitely
	many paths $(\gamma_k)$ in $\res{A_0}$. Then $K' \coloneqq K'' \cup \bigcup_k \gamma_k$
	is a connected, compact subset of $\res{A_0}$ that contains $K$ and $-\omega-1$.

	Since $-\omega - 1 \in \res{A_\eps}$ for all $\eps \ge 0$ we obtain
	from Theorem~\ref{thm:spec_conv} that $K' \subset \res{A_\eps}$
	if $\delta_\eps$ is sufficiently small. Hence $K \subset \res{A_\eps}$ for small
	$\eps$, which implies the claim.
\end{proof}

In the rest of this section, we show that the discrete spectra of
$(A_\eps)_{\eps>0}$ converge to the discrete spectrum of $A_0$ as the
forms $(a_\eps)_{\eps>0}$ converge to $a_0$. In fact, we show that for
an eigenvalue $\lambda$ of $A_0$ of finite algebraic multiplicity
$m_0(\lambda)$ and for sufficiently small $\delta_\eps$, there exist
exactly $m_0(\lambda)$ eigenvalues of $A_\eps$ near $\lambda$, where
we count the eigenvalues according to their algebraic multiplicity.

Recall that the \emph{algebraic multiplicity} $m_0(\lambda)$ of an isolated
point $\lambda \in \spec{A_0}$ is the rank $\rank P_0 \coloneqq \dim \range P_0$
of the spectral projection
\[
	P_0 \coloneqq \frac{1}{2\pi\im} \int_{\partial B(\lambda,r)} R(z,A_0) \dd z,
\]
where $r > 0$ is such that $\overline{B(\lambda,r)} \cap \spec{A_0} =
\{\lambda\}$, compare Sec.~1.3 of~\cite{ALL01}.  By Cauchy's integral
theorem, this definition does not depend on the particular choice of
$r$, and in fact we could replace the circle $\partial B(\lambda,r)$
by any positively oriented, smooth curve that surrounds $\lambda$, but
no other point of $\spec{A_0}$.

\begin{remark}\label{rem:spec_proj_bounded}
	Since $R(z,A_0)$ is locally bounded as a $\mathscr{L}(H_0,V_0)$-valued function,
	see for example estimate~\eqref{eq:RHVest}, the spectral projection $P_0$ is a bounded operator from $H_0$ to $V_0$.
\end{remark}

\begin{lemma}\label{lem:Jup_inverse}
	There exist $\delta_0 > 0$ such that $\norm[H_\eps]{\Jup f} \ge \frac{1}{2} \norm[H_0]{f}$
	for all $f \in \range P_0$ if $\delta_\eps \in (0,\delta_0]$.
\end{lemma}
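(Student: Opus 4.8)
The plan is to deduce the estimate directly from the quasi-unitarity conditions \eqref{A2}--\eqref{A4}, the one nontrivial ingredient being that $\range P_0$ is not merely a subspace of $V_0$ but carries a quantitative $V_0$-bound coming from \Rem{spec_proj_bounded}. Concretely, since $P_0$ is idempotent we have $P_0 f = f$ for every $f \in \range P_0$, so
\[
  \norm[V_0]{f} = \norm[V_0]{P_0 f} \le c_{P_0}\,\norm[H_0]{f}
  \qquad\text{for all } f \in \range P_0,
\]
where $c_{P_0} \coloneqq \norm[\mathscr{L}(H_0,V_0)]{P_0} < \infty$ by \Rem{spec_proj_bounded}.

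Next I would bound $\norm[H_\eps]{\Jup f}^2 = \iprod[H_\eps]{\Jup f}{\Jup f}$ from below for $f \in \range P_0$. Applying the scalar form~\eqref{eq:adjoint_scalar} of~\eqref{A2} with $u = \Jup f$, then writing $\Jdown \Jup f = f - (\id - \Jdown\Jup)f$ and using \eqref{A3} for the second term together with \eqref{A4} to bound $\norm[H_\eps]{\Jup f}$ by $\kappa\norm[H_0]{f}$, one obtains
\begin{align*}
  \norm[H_\eps]{\Jup f}^2
    &\ge \Re\iprod[H_0]{f}{\Jdown\Jup f} - \delta_\eps\,\norm[H_0]{f}\,\norm[H_\eps]{\Jup f} \\
    &\ge \norm[H_0]{f}^2 - \delta_\eps\,\norm[H_0]{f}\,\norm[V_0]{f} - \kappa\,\delta_\eps\,\norm[H_0]{f}^2 .
\end{align*}
Inserting $\norm[V_0]{f} \le c_{P_0}\,\norm[H_0]{f}$ from the first step then yields
\[
  \norm[H_\eps]{\Jup f}^2 \ge \bigl(1 - \delta_\eps(\kappa + c_{P_0})\bigr)\,\norm[H_0]{f}^2 .
\]

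Finally I would choose $\delta_0 \coloneqq \tfrac{3}{4(\kappa + c_{P_0})}$, so that for every $\delta_\eps \in (0,\delta_0]$ the prefactor above is at least $\tfrac14$, hence $\norm[H_\eps]{\Jup f} \ge \tfrac12\norm[H_0]{f}$ for all $f \in \range P_0$, as claimed. There is no serious obstacle here: the whole argument is a routine manipulation of the defining inequalities of $\delta_\eps$-$\kappa$-quasi-unitary equivalence, and the only step one must not overlook is absorbing the term $\delta_\eps\norm[V_0]{f}$ --- which is forced on us by \eqref{A3} --- by means of \Rem{spec_proj_bounded}; this is precisely why that remark was isolated beforehand.
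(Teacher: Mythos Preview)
Your proof is correct and follows essentially the same approach as the paper's: both bound $\normsqr[H_0]{f} - \normsqr[H_\eps]{\Jup f}$ via \eqref{eq:adjoint_scalar}, \eqref{A3} and \eqref{A4}, and then absorb the $\delta_\eps\norm[V_0]{f}$ term using the $\mathscr{L}(H_0,V_0)$-boundedness of $P_0$ from \Rem{spec_proj_bounded}. The only cosmetic difference is that the paper passes from the squared estimate to a linear one by dividing by $\norm[H_0]{f}+\norm[H_\eps]{\Jup f}$ (yielding $\delta_0 = \tfrac{1}{2}(\kappa+c_{P_0})^{-1}$), whereas you keep the squared inequality and take square roots (yielding $\delta_0 = \tfrac{3}{4}(\kappa+c_{P_0})^{-1}$).
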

\begin{proof}
  For all $f \in V_0$ we have by~\eqref{eq:adjoint_scalar}, \eqref{A3}
  and~\eqref{A4} that
	\begin{align*}
		\normsqr[H_\eps]{f} - \normsqr[H_0]{\Jup f}
			& = \iprod[H_0]{f}{f} - \iprod[H_\eps]{\Jup f}{\Jup f} \\
			& = \iprod[H_\eps]{f - \Jdown \Jup f}{f} + \delta_\eps \norm[H_\eps]{\Jup f} \norm[H_0]{f} \\
			& \le \delta_\eps \norm[V_0]{f} \norm[H_0]{f} + \delta_\eps \kappa \normsqr[H_0]{f}.
	\end{align*}
	Now if $f \in \range P_0$, i.e., $f = P_0f$, and $f \neq 0$, we obtain that
	\begin{align*}
		\norm[H_\eps]{f} - \norm[H_0]{\Jup f}
			& \le \delta_\eps \frac{\norm[\mathscr{L}(H_0,V_0)]{P_0} + \kappa}{\norm[H_\eps]{\Jup f} + \norm[H_0]{f}} \normsqr[H_0]{f} \\
			& \le \delta_0 \bigl( \norm[\mathscr{L}(H_0,V_0)]{P_0} + \kappa \bigr) \norm[H_0]{f}
			= \frac{1}{2} \norm[H_0]{f}
	\end{align*}
	for
	\[
		\delta_0 \coloneqq \frac{1}{2} \bigl( \norm[\mathscr{L}(H_0,V_0)]{P_0} + \kappa \bigr)^{-1}.
	\qedhere
	\]
\end{proof}

We now prove our main theorem about continuous dependence of the discrete spectrum.
For simplicity we assume that $\res{A_0}$ is connected, even though
it would suffice that $\res{A_\eps} \cap B(\lambda,r) \neq \emptyset$
for small $\eps$ and all $r > 0$.

\begin{theorem}
  \label{thm:eigenvalue_convergence}
  Let $\res{A_0}$ be connected, let $\lambda$ be an isolated point of
  $\spec{A_0}$ with finite algebraic multiplicity $m_0(\lambda) \in
  \N$, and let $D$ be a bounded, open set such that $\overline{D} \cap
  \spec{A_0} = \{\lambda\}$.  Then there exists $\delta_0 > 0$ such
  that if $a_\eps$ is $\delta_\eps$-$\kappa$-quasi-unitarily
  equivalent to $a_0$ for $\delta_\eps \in (0,\delta_0]$ and if
  $(a_\eps)_\eps$ is equi-elliptic, then there exist eigenvalues
  $(\lambda_{\eps,i})_{i=1}^{m_0(\lambda)}$ of $A_\eps$ such that
  \[
  \spec{A_\eps} \cap D = \bigl\{ \lambda_{\eps,1}, \ldots,
  \lambda_{\eps,m_0(\lambda)} \bigr\}.
  \]
  Here, the values $(\lambda_{\eps,i})$ are not necessarily pairwise
  different, but rather each value is repeated according to its
  algebraic multiplicity with respect to $A_\eps$.
\end{theorem}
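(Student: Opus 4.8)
The plan is to compare the spectral projections of $A_\eps$ and $A_0$ associated with the contour $\partial D$ (or a smooth curve surrounding $\lambda$ but no other point of $\spec{A_0}$), and to show that for small $\delta_\eps$ they have the same rank, namely $m_0(\lambda)$; this immediately gives that $A_\eps$ has exactly $m_0(\lambda)$ eigenvalues in $D$ counted with algebraic multiplicity, and none outside $\{\lambda\}$'s neighbourhood inside $D$.

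First I would fix $r>0$ small enough that $\overline{B(\lambda,r)}\cap\spec{A_0}=\{\lambda\}$ and $\overline{B(\lambda,r)}\subset D$. Set $K:=\partial B(\lambda,r)$, a compact connected subset of $\res{A_0}$. Applying \Cor{conn_spec} (using that $\res{A_0}$ is connected) we get, for $\delta_\eps$ sufficiently small, that $K\subset\res{A_\eps}$ and moreover, by \Thm{spec_conv}, the resolvent estimate
\[
\norm[\mathscr{L}(H_\eps)]{\Jup R(z,A_0)\Jdown - R(z,A_\eps)}\le D_{\theta,K}\,\delta_\eps
\qquad\text{for all }z\in K.
\]
Define the spectral projections
\[
P_\eps:=\frac{1}{2\pi\im}\int_{K} R(z,A_\eps)\,\dd z,
\qquad
P_0:=\frac{1}{2\pi\im}\int_{K} R(z,A_0)\,\dd z,
\]
so $P_0$ has rank $m_0(\lambda)$. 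Integrating the resolvent estimate over the fixed compact contour $K$ yields
\[
\norm[\mathscr{L}(H_\eps)]{\Jup P_0 \Jdown - P_\eps}\le \frac{|K|}{2\pi}\,D_{\theta,K}\,\delta_\eps =: C\,\delta_\eps .
\]
Also, since $K\subset\res{A_\eps}$ and $\res{A_\eps}$ is open, shrinking $\delta_0$ if necessary we may assume $K\subset\res{A_\eps}$ for all relevant $\eps$, so $\spec{A_\eps}\cap D$ is contained in $B(\lambda,r)$ (anything in $D\setminus\overline{B(\lambda,r)}$ would have to be reached; more carefully: apply the same argument to a contour enclosing $D\setminus B(\lambda,r)$ region, or simply note $\spec{A_\eps}\cap D\subset B(\lambda,r)$ follows because $\overline D\setminus B(\lambda,r)$ is a compact subset of $\res{A_0}$ meeting $\res{A_\eps}$, so it lies in $\res{A_\eps}$ by \Thm{spec_conv}). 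Hence $\spec{A_\eps}\cap D$ is precisely the part of $\spec{A_\eps}$ enclosed by $K$, and $\rank P_\eps = \sum_i m_\eps(\lambda_{\eps,i})$ where the sum runs over eigenvalues of $A_\eps$ inside $B(\lambda,r)$.

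It remains to prove $\rank P_\eps = m_0(\lambda)$ for small $\delta_\eps$. For the inequality $\rank P_\eps \ge m_0(\lambda)$: the map $f\mapsto P_\eps \Jup f$ from $\range P_0$ to $\range P_\eps$ is injective, since for $f\in\range P_0$,
\[
\norm[H_\eps]{P_\eps \Jup f}\ge \norm[H_\eps]{\Jup P_0\Jdown\Jup f} - C\delta_\eps\norm[H_\eps]{\Jup f},
\]
and $\Jdown\Jup$ is close to $\id$ on $V_0\supset\range P_0$ by~\eqref{A3} (using \Rem{spec_proj_bounded}), while $\norm[H_\eps]{\Jup f}\ge\tfrac12\norm[H_0]{f}$ by \Lem{Jup_inverse}; combining these, $\norm[H_\eps]{P_\eps\Jup f}\ge c\norm[H_0]{f}$ for some $c>0$ once $\delta_\eps$ is small, giving injectivity and hence $\dim\range P_\eps\ge m_0(\lambda)$. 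For the reverse inequality $\rank P_\eps\le m_0(\lambda)$: run the symmetric argument with the roles of $A_\eps$ and $A_0$ interchanged (all hypotheses in \Def{quasi-uni} are symmetric, as noted in the proof of \Cor{hol.calc.cor}), obtaining an injective map $\range P_\eps\to\range P_0$, i.e.\ $\rank P_\eps\le\rank P_0=m_0(\lambda)$. Together these force $\rank P_\eps = m_0(\lambda)$, which is exactly the assertion. The main obstacle I anticipate is the lower bound $\rank P_\eps\ge m_0(\lambda)$: one must carefully chain together the "almost-unitarity" estimates~\eqref{A3}, \Lem{Jup_inverse}, and the projection estimate so that the $\delta_\eps$-errors are genuinely controlled on the finite-dimensional space $\range P_0$ — in particular invoking \Rem{spec_proj_bounded} to know $P_0$ maps into $V_0$, so that~\eqref{A3} (stated in $\mathscr{L}(V_0,H_0)$) can be applied to elements of $\range P_0$.
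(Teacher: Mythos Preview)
Your proposal is correct and follows essentially the same strategy as the paper: both arguments define the spectral projections $P_0$ and $P_\eps$ via contour integrals, use \Thm{spec_conv}/\Cor{conn_spec} to control the difference, and conclude $\rank P_\eps=\rank P_0=m_0(\lambda)$ by constructing injective maps between the ranges in both directions. The only cosmetic differences are that the paper integrates over a smoothed $\partial D$ rather than the circle $\partial B(\lambda,r)$, and uses the intertwining estimate $\norm{\Jdown P_\eps - P_0\Jdown}\le C_1\delta_\eps$ together with the map $P_0\Jdown\colon\range P_\eps\to\range P_0$, whereas you use the sandwiched estimate $\norm{\Jup P_0\Jdown - P_\eps}\le C\delta_\eps$ together with $P_\eps\Jup\colon\range P_0\to\range P_\eps$; these are dual versions of the same idea and both rely on \Lem{Jup_inverse} and \Rem{spec_proj_bounded} in exactly the way you anticipated.
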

\begin{proof}
	We may assume that $D$ has smooth boundary. In fact, otherwise we can replace $D$
	by an open set $D_1 \subset D$ with smooth boundary still containing $\lambda$.
	Since $(\overline{D} \setminus D_1) \cap \spec{A_\eps} = \emptyset$ for small $\delta_\eps$
	by Corollary~\ref{cor:conn_spec}, the result carries over from $D_1$ to $D$.

	By Corollary~\ref{cor:conn_spec}, the integral
        \begin{equation*}
          P_\eps \coloneqq \frac{1}{2 \pi \im} \int_{\partial D} R_\eps(z) \dd z
        \end{equation*}
        is defined for sufficiently small $\delta_\eps$, and using
        Theorem~\ref{thm:spec_conv} we see that there exist $\delta_1
        > 0$ and $C_1 \ge 0$ such that
	\[
		\norm[\mathscr{L}(H_\eps)]{\Jdown P_\eps - P_0 \Jdown} < C_1 \delta_\eps
	\]
	if $\delta_\eps \in (0,\delta_1]$.

	Now let $u \in \range(P_\eps)$, i.e., $P_\eps u = u$. Then by Lemma~\ref{lem:Jup_inverse}
	there exists $\delta_2 \in (0,\delta_1)$ such that
	\[
		\norm[H_0]{P_0 \Jdown u}
			\ge \norm[H_0]{\Jdown P_\eps u} - \norm[H_0]{(\Jdown P_\eps - P_0 \Jdown) u}
			\ge \frac{1}{2} \norm[H_\eps]{u} - C_1 \delta_\eps \norm[H_\eps]{u}
			> 0
	\]
	whenever $\delta_\eps \in (0,\delta_2]$. This proves that $P_0 \Jdown$ is injective
	on $\range(P_\eps)$, showing that $\rank P_0 \ge \rank P_\eps$ whenever $\delta_\eps \in (0,\delta_2]$.

	For the converse inequality, we interchange the roles of $P_0$
        and $P_\eps$.  In fact, the estimate $\norm[H_0]{\Jdown u} \ge
        \tfrac{1}{2} \norm[H_\eps]{u}$ for $u \in \range(P_\eps)$ can
        be obtained as in \Lem{Jup_inverse} by exploiting the fact
        that \Lem{res_est_sector} and~\eqref{eq:RHVest} provide a
        uniform bound for $\norm[\mathscr{L}(H_\eps,V_\eps)]{P_\eps}$,
        compare also \Rem{spec_proj_bounded}.  Now it readily follows
        that $\rank P_\eps \ge \rank P_0$ for sufficiently small
        $\delta_\eps$.

	Thus there exists $\delta_0 > 0$ such that $m_0(\lambda) =
        \rank P_\eps$ whenever $\delta_\eps \in (0,\delta_0]$.  This
        implies that $\spec{A_\eps} \cap D$ consists of finitely many
        eigenvalues whose algebraic multiplicities add up to
        $m_0(\lambda)$, compare Theorem~1.32 of~\cite{ALL01}.
\end{proof}
\begin{remark}
  Theorem~\ref{thm:eigenvalue_convergence} says that near an isolated
  eigenvalue $\lambda$ of $A_0$ any sufficiently close operator
  $A_\eps$ also possess only isolated eigenvalues, whose
  multiplicities add up to the multiplicity of $\lambda$. This is a
  version of Corollary~A.15 of~\cite{post:06} for non-self-adjoint
  operators, see also~\cite{post:pre11}.
\end{remark}

The following corollary is a trivial consequence of
Theorem~\ref{thm:eigenvalue_convergence} and
Corollary~\ref{cor:conn_spec}.
\begin{corollary}
  Let $\res{A_0}$ be connected, let $\lambda$ be an isolated point of
  $\spec{A_0}$ with finite algebraic multiplicity $m_0(\lambda) \in
  \N$,, and let $(a_\eps)_{\eps > 0}$ converge to $a_0$ in the sense
  of Definition~\ref{def:quasi-uni}. Then the eigenvalues
  $\lambda_{\eps,i}$ in Theorem~\ref{thm:eigenvalue_convergence}
  converge to $\lambda$, i.e., $\lim_{\eps \to 0} \lambda_{\eps,i} \to
  \lambda$ for every $i=1, \ldots, m_0(\lambda)$.
\end{corollary}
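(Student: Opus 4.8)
The plan is to deduce everything from the exact eigenvalue count supplied by \Thm{eigenvalue_convergence} together with the ``upper semicontinuity'' of the spectrum from \Cor{conn_spec}. Fix the bounded open set $D$ occurring in \Thm{eigenvalue_convergence}, so that $\overline D \cap \spec{A_0} = \{\lambda\}$, and let $r > 0$ be arbitrary. Set $K_r \coloneqq \overline D \setminus B(\lambda,r)$. Since $D$ is bounded, $\overline D$ is compact, and $K_r$, being a closed subset of $\overline D$, is compact as well; moreover $K_r \subset \res{A_0}$, because the only point of $\spec{A_0}$ lying in $\overline D$ is $\lambda$, which has been removed.

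Next I would invoke the two cited results. Because $(a_\eps)_{\eps>0}$ converges to $a_0$ in the sense of \Def{quasi-uni}, there is a family $(\delta_\eps)_{\eps>0}$ with $\delta_\eps \to 0$ as $\eps \to 0$ along which $a_\eps$ is $\delta_\eps$-$\kappa$-quasi-unitarily equivalent to $a_0$; in particular $\delta_\eps \le \delta_0$ for $\eps$ small, so \Thm{eigenvalue_convergence} applies and yields $\spec{A_\eps} \cap D = \{\lambda_{\eps,1},\dots,\lambda_{\eps,m_0(\lambda)}\}$. On the other hand, $\res{A_0}$ is connected, so by \Cor{conn_spec} the spectra $\spec{A_\eps}$ converge to $\spec{A_0}$ in the sense of \Def{spec_conv}; applied to the compact set $K_r \subset \res{A_0}$ this provides $\eps_1 > 0$ such that $K_r \subset \res{A_\eps}$, i.e.\ $\spec{A_\eps} \cap K_r = \emptyset$, for all $\eps \in (0,\eps_1)$.

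Combining these two facts, for all sufficiently small $\eps$ each eigenvalue $\lambda_{\eps,i}$ lies in $D$ but not in $K_r$, whence $\lambda_{\eps,i} \in D \cap B(\lambda,r)$ and therefore $\abs{\lambda_{\eps,i} - \lambda} < r$. Since $r > 0$ was arbitrary, this shows $\lim_{\eps\to 0}\lambda_{\eps,i} = \lambda$ for every $i = 1,\dots,m_0(\lambda)$, which is the claim. I do not expect a genuine obstacle here: the assertion is a formal consequence of the two inputs. The only mild point to watch is the bookkeeping of the smallness threshold for $\eps$ — it must be small enough for \Thm{eigenvalue_convergence} to deliver the exact count in $D$ and small enough for \Cor{conn_spec} to clear $K_r$ of spectrum — and this is harmless precisely because form convergence forces $\delta_\eps \to 0$.
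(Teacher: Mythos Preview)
Your argument is correct and is precisely the intended one: the paper states without further detail that the corollary is a trivial consequence of \Thm{eigenvalue_convergence} and \Cor{conn_spec}, and your proof simply spells out that deduction by clearing the annulus $K_r=\overline D\setminus B(\lambda,r)$ of spectrum while the theorem forces the $\lambda_{\eps,i}$ to remain in $D$.
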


\subsection{Invariance and extrapolation}\label{sec:extra}

Assume that $(a_\eps)_{\eps > 0}$ converges to $a_0$ in the sense of
Definition~\ref{def:quasi-uni}.  We have already shown that the
generated semigroups also converge in an appropriate sense, see
Example~\ref{ex:semigroups}.  It is now natural to ask whether certain
properties of the semigroups $(\e^{-tA_\eps})_{\eps > 0}$ are
inherited by $\e^{-tA_0}$ under appropriate assumptions on the
operators $\Jup$ and $\Jdown$.

In this short section, we formulate a simple result of this kind and
apply it to obtain convergence of the semigroups in extrapolation spaces
under natural assumptions.

\begin{theorem}\label{thm:invariance}
  Let $(a_\eps)_{\eps > 0}$ converge to $a_0$ as $\eps \to 0$ in the
  sense of Definition~\ref{def:quasi-uni}, let $\theta \in
  \thetainterval$, and let $\phi \in H^\infty(\Sigma_\theta -
  \omega)$.  For every $\eps \ge 0$, let $C_\eps$ be a closed subset
  of $H_\eps$ such that
\begin{equation}
\label{c0ceps}
		\Jup C_0 \subset C_\eps
		\quad\text{and}\quad
		\Jdown C_\eps \subset C_0.
\end{equation}
	If $\phi(A_\eps)C_\eps \subset C_\eps$ for all $\eps > 0$,
	then $\phi(A_0) C_0 \subset C_0$.
\end{theorem}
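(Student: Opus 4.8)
The plan is to exploit the norm convergence of $\phi(A_\eps)$ to $\phi(A_0)$ that was established in \Thm{hol.calc2} and \Cor{hol.calc.cor}, together with the invariance hypotheses on the $C_\eps$ and the compatibility conditions~\eqref{c0ceps}. Fix $f \in C_0$; we must show $\phi(A_0) f \in C_0$. Since $C_0$ is closed, it suffices to produce a sequence (indexed by $\eps \to 0$) in $C_0$ converging to $\phi(A_0) f$ in $H_0$. The natural candidate is $g_\eps \coloneqq \Jdown \phi(A_\eps) \Jup f$. Indeed, $\Jup f \in C_\eps$ by the first inclusion in~\eqref{c0ceps}, hence $\phi(A_\eps) \Jup f \in C_\eps$ by the assumed invariance, hence $g_\eps = \Jdown \phi(A_\eps) \Jup f \in C_0$ by the second inclusion in~\eqref{c0ceps}. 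So each $g_\eps$ lies in $C_0$, and it remains to check $g_\eps \to \phi(A_0) f$ in $H_0$ as $\eps \to 0$.

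For the convergence $g_\eps \to \phi(A_0) f$ I would invoke \Cor{hol.calc.cor}: since $(a_\eps)_{\eps>0}$ converges to $a_0$ in the sense of \Def{quasi-uni}, the operators $\Jdown \phi(A_\eps) \Jup$ converge to $\phi(A_0)$ in the strong operator topology on $H_0$ (and in operator norm if $\phi \in H^\infty_{00}$, though strong convergence is all we need here). Applying this strong convergence to the fixed vector $f$ gives $\Jdown \phi(A_\eps) \Jup f \to \phi(A_0) f$ in $H_0$, i.e.\ $g_\eps \to \phi(A_0) f$. Since $g_\eps \in C_0$ for every $\eps$ and $C_0$ is closed, the limit $\phi(A_0) f$ lies in $C_0$. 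As $f \in C_0$ was arbitrary, this proves $\phi(A_0) C_0 \subset C_0$.

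There is no serious obstacle here; the only point requiring a little care is that \Cor{hol.calc.cor} must be applicable, which it is because its hypotheses (namely that $(a_\eps)_{\eps>0}$ converges to $a_0$ in the sense of \Def{quasi-uni}, and $\phi \in H^\infty(\Sigma_\theta - \omega)$ with $\theta \in \thetainterval$) coincide exactly with the hypotheses of the present theorem. One should also note that the corollary's strong convergence statement is only for $\phi \in H^\infty$ (not necessarily $H^\infty_{00}$), but that is the generality we are in, so nothing more is needed. The argument is essentially the standard "limit of a convergent sequence in a closed set stays in the set" combined with the functional-calculus convergence already in hand, so the proof is short.

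\begin{proof}
  Fix $f \in C_0$ and set $g_\eps \coloneqq \Jdown \phi(A_\eps) \Jup f$
  for $\eps > 0$.  By the first inclusion in~\eqref{c0ceps} we have
  $\Jup f \in C_\eps$, hence $\phi(A_\eps) \Jup f \in C_\eps$ by the
  assumed invariance of $C_\eps$ under $\phi(A_\eps)$, and hence $g_\eps
  = \Jdown \bigl( \phi(A_\eps) \Jup f \bigr) \in C_0$ by the second
  inclusion in~\eqref{c0ceps}.  On the other hand, since $(a_\eps)_{\eps
    > 0}$ converges to $a_0$ in the sense of \Def{quasi-uni} and $\phi
  \in H^\infty(\Sigma_\theta - \omega)$, \Cor{hol.calc.cor} shows that
  $\Jdown \phi(A_\eps) \Jup$ converges to $\phi(A_0)$ in the strong
  operator topology on $H_0$.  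Applying this to the fixed vector $f$
  yields $g_\eps \to \phi(A_0) f$ in $H_0$ as $\eps \to 0$.  Since
  $g_\eps \in C_0$ for all $\eps > 0$ and $C_0$ is closed, we conclude
  that $\phi(A_0) f \in C_0$.  As $f \in C_0$ was arbitrary, this proves
  $\phi(A_0) C_0 \subset C_0$.
\end{proof}
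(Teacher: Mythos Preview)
Your proof is correct and follows exactly the same approach as the paper's own proof: establish $(\Jdown \phi(A_\eps) \Jup) C_0 \subset C_0$ via the chain $C_0 \to C_\eps \to C_\eps \to C_0$, then invoke \Cor{hol.calc.cor} for strong convergence and use closedness of $C_0$. The paper compresses this into two sentences, but the argument is identical.
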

\begin{proof}
	By the assumptions,
\begin{equation}
\label{c0c0}
		(\Jdown \phi(A_\eps) \Jup) C_0 \subset C_0
\end{equation}
for all $\eps > 0$.  Thus the result follows from
Corollary~\ref{cor:hol.calc.cor} because the invariance of a closed
set is stable under strong convergence.
\end{proof}

\begin{remark}\label{rem:rescaling}
  In some applications, for example in Section~\ref{sec:robin},
  Condition~\eqref{c0ceps} is only satisfied up to a rescaling of the
  identification operators, i.e., we can write the identification
  operators as
  \[
    \Jup = c_\eps \widetilde{\Jup},\qquad \Jdown = c_\eps^{-1}
    \widetilde{\Jdown}
  \]
  for operators $\widetilde{\Jup}$ and $\widetilde{\Jdown}$ that do
  satisfy~\eqref{c0ceps}.  It is clear that also in this more general
  situation the inclusion~\eqref{c0c0} is satisfied and hence the
  assertion of \Thm{invariance} remains valid.
\end{remark}

It is well-known how invariance of closed convex subsets under the
action of a semigroup on a Hilbert space $H$ generated by an operator
associated with a sesquilinear form can be efficiently characterised
by a Beurling-Deny-type criterion due to Ouhabaz, see Thm.~2.2
of~\cite{Ouh05}.  Assuming that $H = L^2(\Omega,\mu)$ with a measure
space $\Omega$, typical applications of this criterion involve
positivity and $L^\infty$-contractivity (i.e., invariance of the
subset of those $L^2$-functions taking a.e.\ values in the interval
$[0,\infty)$ or $[-1,1]$).

A typical application of \Thm{invariance} is
the following.
\begin{corollary}\label{cor:submarkov}
  Let $p \in [1,\infty]$.  Assume that $H_\eps = L^2(\Omega_\eps)$ for
  measure spaces $\Omega_\eps$, $\eps \ge 0$. Assume that
  $(a_\eps)_{\eps > 0}$ converges to $a_0$, where the operators $\Jup$
  and $\Jdown$ in the definition of convergence are positive
  ($L^p$-contractive).  Assume that the semigroup $(\e^{-tA_\eps})_{t
    \ge 0}$ is positive ($L^p$-contractive) on $H_\eps$ for every
  $\eps > 0$.  Then $(\e^{-tA_0})_{t \ge 0}$ is positive
  ($L^p$-contractive) on $H_0$.
\end{corollary}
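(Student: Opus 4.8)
The plan is to deduce \Cor{submarkov} directly from \Thm{invariance} by choosing the closed sets $C_\eps$ appropriately and checking the hypotheses. For the positivity statement I would set
\[
	C_\eps \coloneqq \bigset{u \in L^2(\Omega_\eps)}{u \ge 0 \text{ $\mu_\eps$-a.e.}},
\]
which is a closed (and convex) subset of $H_\eps$ for every $\eps \ge 0$. Positivity of $\Jup$ and $\Jdown$ is exactly the statement that $\Jup C_0 \subset C_\eps$ and $\Jdown C_\eps \subset C_0$, so~\eqref{c0ceps} holds. Positivity of the semigroup $(\e^{-tA_\eps})_{t \ge 0}$ means $\e^{-tA_\eps} C_\eps \subset C_\eps$ for all $t \ge 0$ and all $\eps > 0$. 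Since the function $z \mapsto \e^{-tz}$ lies in $H^\infty(\Sigma_\theta - \omega)$ for a suitable $\theta \in \thetainterval$ and all $t > 0$ (cf.\ \Ex{semigroups}), \Thm{invariance} applies with $\phi(z) = \e^{-tz}$ and yields $\e^{-tA_0} C_0 \subset C_0$ for every $t > 0$; the case $t = 0$ is trivial. Hence $(\e^{-tA_0})_{t\ge 0}$ is positive.

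For the $L^p$-contractivity statement, $p \in [1,\infty]$, I would instead take
\[
	C_\eps \coloneqq \bigset{u \in L^2(\Omega_\eps)}{\norm[L^p(\Omega_\eps)]{u} \le 1},
\]
interpreted as $\norm[L^\infty]{u}\le 1$ when $p = \infty$. This is again a closed convex subset of $H_\eps$ (it is closed in $L^2$ because $L^p$-norm bounded sets are $L^2$-closed, using Fatou's lemma along an $L^2$-convergent subsequence that converges a.e.). That $\Jup$ and $\Jdown$ are $L^p$-contractive says precisely $\Jup C_0 \subset C_\eps$ and $\Jdown C_\eps \subset C_0$, and $L^p$-contractivity of $(\e^{-tA_\eps})_{t\ge0}$ says $\e^{-tA_\eps} C_\eps \subset C_\eps$. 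Applying \Thm{invariance} with $\phi(z) = \e^{-tz}$ as before gives $\e^{-tA_0} C_0 \subset C_0$ for all $t \ge 0$, i.e.\ $(\e^{-tA_0})_{t\ge0}$ is $L^p$-contractive on $H_0$.

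The only genuine subtlety, and the point I would be most careful about, is the interplay between the $L^2$-topology (in which \Thm{invariance} requires $C_\eps$ to be closed, since its proof passes through strong convergence in $H_\eps$) and the $L^p$-structure used to phrase positivity and contractivity: one must verify that the relevant $L^p$-cones and $L^p$-balls really are $L^2$-closed, which is where Fatou's lemma enters. Everything else is a direct unwinding of definitions. I would also remark, as the paper does in \Rem{rescaling}, that if $\Jup$ and $\Jdown$ are only positive resp.\ $L^p$-contractive up to the scalar factors $c_\eps$, $c_\eps^{-1}$, the argument is unaffected, since positivity and the inclusion properties are insensitive to multiplication by positive constants (and, for contractivity, one absorbs the constants into $\widetilde\Jup$, $\widetilde\Jdown$).
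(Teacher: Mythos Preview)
Your proof is correct and follows exactly the paper's approach: apply \Thm{invariance} with $\phi(z)=\e^{-tz}$ to the positive cone $C_\eps=\{u\in L^2(\Omega_\eps):u\ge0\}$ for positivity, and to the closed unit $L^p$-ball $C_\eps=\{u\in L^2(\Omega_\eps)\cap L^p(\Omega_\eps):\norm[L^p(\Omega_\eps)]{u}\le1\}$ for $L^p$-contractivity. You have simply supplied more justification than the paper gives (closedness via Fatou, the observation that $\e^{-tz}\in H^\infty(\Sigma_\theta-\omega)$, and the rescaling remark), all of which is correct.
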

\begin{proof}
	Apply Theorem~\ref{thm:invariance} to the closed (and convex) sets
	\[
		C_\eps \coloneqq \bigl\{ u \in L^2(\Omega_\eps) : u \ge 0 \text{ a.e.} \bigr\}
	\]
	and
	\[
		C_\eps \coloneqq \bigl\{ u \in L^2(\Omega_\eps) \cap L^p(\Omega_\eps) : \norm[L^p(\Omega_\eps)]{u} \le 1 \bigr\}
,\qquad p \in [1,\infty],
	\]
	respectively.
\end{proof}

If we are in the situation that the semigroups on $H_\eps =
L^2(\Omega_\eps)$ are $L^\infty$-contractive, we can even establish
convergence in $\mathscr{L}(L^p(\Omega_\eps))$.  We could also state
the result in a more general version for arbitrary interpolation
spaces.  But this would involve several technical assumption that we
prefer to avoid.  It is clear that the analogous result for $1 < p <
2$ holds if we assume the semigroups to be $L^1$-contractive.

\begin{theorem}\label{thm:Lpconv}
  Let $(a_\eps)_{\eps > 0}$ converge to $a_0$ in the sense of
  Definition~\ref{def:quasi-uni} as $\eps \to 0$, assume that $H_\eps
  = L^2(\Omega_\eps)$ for $\eps \ge 0$ with measure spaces
  $(\Omega_\eps)$, let $\theta \in \thetainterval$, let $\phi \in
  H^\infty(\Sigma_\theta - \omega)$, and let $p \in [2,\infty)$.
  Assume that there exists a family $(c_\eps)_{\eps > 0}$ of positive
  real numbers such that $c_\eps \Jup$, $c_\eps^{-1} \Jdown$ and
  $\phi(A_\eps)$ are $L^\infty$-contractive for all $\eps > 0$.
  
  Then $\Jdown \phi(A_\eps) \Jup \to \phi(A_0)$ strongly as operators on
  $L^p(\Omega_0)$.  If $\phi \in H^\infty_{00}(\Sigma_\theta - \omega)$, the
  operators convergence even in operator norm, and in this case we have
  \begin{equation*}
    \norm[\Lin{L^p(\Omega_\eps)}]
         {\Jup \phi(A_0) \Jdown - \phi(A_\eps)} \to 0.
  \end{equation*}
\end{theorem}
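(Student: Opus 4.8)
The strategy is to upgrade the $L^2$-convergence furnished by \Cor{hol.calc.cor} and \Thm{hol.calc2} to $L^p$ by interpolating against a uniform $L^\infty$-bound. Throughout write $S_\eps \coloneqq \Jdown\phi(A_\eps)\Jup$, regarded as operators on $L^2(\Omega_0)$, and (when $\phi\in H^\infty_{00}(\Sigma_\theta-\omega)$) $T_\eps \coloneqq \Jup\phi(A_0)\Jdown - \phi(A_\eps)$, regarded as operators on $L^2(\Omega_\eps)$. The one step that is not purely formal is to see that the limit $\phi(A_0)$ is again $L^\infty(\Omega_0)$-contractive: I would apply \Thm{invariance} together with \Rem{rescaling} to the sets $C_\eps \coloneqq \{\,u \in L^2(\Omega_\eps)\cap L^\infty(\Omega_\eps) : \norm[L^\infty(\Omega_\eps)]{u}\le 1\,\}$, which are convex and closed in $L^2(\Omega_\eps)$ (a subsequence converging in $L^2$ converges a.e.); the rescaled operators $c_\eps\Jup$ and $c_\eps^{-1}\Jdown$ map $C_0$ into $C_\eps$ and $C_\eps$ into $C_0$, respectively, and $\phi(A_\eps)C_\eps\subset C_\eps$ is exactly the assumed $L^\infty$-contractivity, so $\phi(A_0)C_0\subset C_0$. (Equivalently, and more directly: $S_\eps$ is $L^\infty$-contractive, $S_\eps\to\phi(A_0)$ strongly on $L^2(\Omega_0)$ by \Cor{hol.calc.cor}, and the $L^\infty$-unit ball is $L^2$-closed.)

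Next I would collect the two-sided estimates. Since $S_\eps = (c_\eps^{-1}\Jdown)\,\phi(A_\eps)\,(c_\eps\Jup)$ is a composition of three $L^\infty$-contractions, $\norm[\mathscr{L}(L^\infty(\Omega_0))]{S_\eps}\le 1$; likewise $\Jup\phi(A_0)\Jdown = (c_\eps\Jup)\,\phi(A_0)\,(c_\eps^{-1}\Jdown)$ has $\norm[\mathscr{L}(L^\infty(\Omega_\eps))]{\Jup\phi(A_0)\Jdown}\le 1$, so that $\norm[\mathscr{L}(L^\infty(\Omega_\eps))]{T_\eps}\le 2$. On the $L^2$-side, \eqref{A4} and \eqref{eq:func_calc_bound} give the uniform bound $\norm[\mathscr{L}(L^2(\Omega_0))]{S_\eps}\le \kappa^2\bigl(2+\tfrac 2{\sqrt 3}\bigr)\norm[\infty]{\phi}=:M$ (and the same bound for $\phi(A_0)$), \Cor{hol.calc.cor} gives $S_\eps\to\phi(A_0)$ strongly on $L^2(\Omega_0)$, and, when $\phi\in H^\infty_{00}(\Sigma_\theta-\omega)$, \eqref{eq:psi_est} (with $\psi=\phi$) gives both $\norm[\mathscr{L}(L^2(\Omega_\eps))]{T_\eps}\le C_\phi\delta_\eps$ and $\norm[\mathscr{L}(L^2(\Omega_0))]{S_\eps-\phi(A_0)}\le C_\phi\delta_\eps$. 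Now the Riesz--Thorin interpolation theorem applies: for $p\in[2,\infty)$, writing $\tfrac1p=\tfrac{1-\vartheta}{2}$ with $\vartheta=1-\tfrac2p\in[0,1)$,
\[
  \norm[\mathscr{L}(L^p(\Omega_0))]{S_\eps-\phi(A_0)}
  \le \norm[\mathscr{L}(L^2(\Omega_0))]{S_\eps-\phi(A_0)}^{2/p}\,
      \norm[\mathscr{L}(L^\infty(\Omega_0))]{S_\eps-\phi(A_0)}^{1-2/p}
  \le 2^{1-2/p}\,(C_\phi\delta_\eps)^{2/p},
\]
and analogously $\norm[\mathscr{L}(L^p(\Omega_\eps))]{T_\eps}\le 2^{1-2/p}(C_\phi\delta_\eps)^{2/p}$, whenever $\phi\in H^\infty_{00}(\Sigma_\theta-\omega)$; since $\delta_\eps\to 0$ and $2/p>0$, both right-hand sides tend to $0$, which is the operator-norm assertion. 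For general $\phi\in H^\infty(\Sigma_\theta-\omega)$ the same interpolation yields the uniform bounds $\norm[\mathscr{L}(L^p(\Omega_0))]{S_\eps}\le M^{2/p}$ and $\norm[\mathscr{L}(L^p(\Omega_0))]{\phi(A_0)}\le M^{2/p}$.

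Finally, for the strong convergence when $\phi\in H^\infty(\Sigma_\theta-\omega)$, it suffices by this uniform bound to verify $S_\eps u\to\phi(A_0)u$ in $L^p(\Omega_0)$ on the dense subspace of $L^p(\Omega_0)$ consisting of simple functions with support of finite measure. Any such $u$ lies in $L^2(\Omega_0)\cap L^\infty(\Omega_0)$, so $S_\eps u\to\phi(A_0)u$ in $L^2(\Omega_0)$ by \Cor{hol.calc.cor}, while $\norm[L^\infty(\Omega_0)]{(S_\eps-\phi(A_0))u}\le 2\norm[L^\infty(\Omega_0)]{u}$; hence
\[
  \norm[L^p(\Omega_0)]{(S_\eps-\phi(A_0))u}^{p}
  \le \norm[L^\infty(\Omega_0)]{(S_\eps-\phi(A_0))u}^{p-2}\,
      \norm[L^2(\Omega_0)]{(S_\eps-\phi(A_0))u}^{2}
  \le \bigl(2\norm[L^\infty(\Omega_0)]{u}\bigr)^{p-2}\,
      \norm[L^2(\Omega_0)]{(S_\eps-\phi(A_0))u}^{2},
\]
which tends to $0$ as $\eps\to0$; density together with uniform $L^p$-boundedness then upgrades this to strong convergence on all of $L^p(\Omega_0)$. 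There is no deep obstacle here: the only care needed is the bookkeeping of the scaling factors $c_\eps^{\pm1}$ so that the composite operators come out $L^\infty$-contractive, and the mildly non-formal point that $\phi(A_0)$ inherits $L^\infty$-contractivity, settled in the first paragraph.
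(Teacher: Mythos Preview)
Your proof is correct and follows essentially the same route as the paper: establish $L^\infty$-contractivity of $\phi(A_0)$ via \Thm{invariance}/\Rem{rescaling}, then interpolate the $L^2$-convergence from \Cor{hol.calc.cor} and \Thm{hol.calc2} against the uniform $L^\infty$-bound by Riesz--Thorin, using a density argument for the strong case. Your bookkeeping is in fact slightly more careful than the paper's in two spots: you record the correct interpolation exponent $2/p$ (the paper writes $p/2$, a typo), and you give the uniform $L^p$-bound $M^{2/p}$ rather than the paper's claim of $\le 1$, which would require the $L^2$-norm of $S_\eps$ to be at most~$1$.
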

\begin{proof}
  By Corollary~\ref{cor:submarkov} and Remark~\ref{rem:rescaling} also
  $\phi(A_0)$ is $L^\infty$-contractive.  Moreover, by
  Corollary~\ref{cor:hol.calc.cor},
  \begin{equation*}
    \norm[H_0]{\phi(A_0)f - \Jdown \phi(A_\eps) \Jup f} \to 0
  \end{equation*}
  for all $f \in H_0$. Now by the interpolation inequality
  \begin{align*}
    & \norm[L^p(\Omega_0)]{\phi(A_0)f - \Jdown \phi(A_\eps) \Jup f} \\
    & \quad \le \norm[L^\infty(\Omega_0)]{\phi(A_0)f - \Jdown
      \phi(A_\eps) \Jup f}^{(p-2)/p} \;\;
      \norm[L^2(\Omega_0)]{\phi(A_0)f - \Jdown \phi(A_\eps) \Jup f}^{2/p} \\
    & \quad \le \bigl( 2 \norm[L^\infty(\Omega_0)]{f} \bigr)^{(p-2)/p} \;\;
         \norm[L^2(\Omega_0)]{\phi(A_0)f - \Jdown \phi(A_\eps) \Jup f}^{2/p}
       \to 0
  \end{align*}
  for all $f$ in the dense subspace $L^2(\Omega_0) \cap
  L^\infty(\Omega_0)$ of $L^p(\Omega_0)$.  Since in addition
	\[
	\norm[\mathscr{L}(L^p(\Omega_0))]{\Jdown \phi(A_\eps) \Jup} \le 1
	\]
	by the Riesz-Thorin interpolation theorem, this proves strong
        convergence.

	Now if $\psi \in H^\infty_{00}(\Sigma_\theta)$, then as in the
        proof of Corollary~\ref{cor:hol.calc.cor} there exists $C_\psi
        \ge 0$ such that
	\[
        \norm[\mathscr{L}(H_0)]{\psi(A_0) - \Jdown \psi(A_\eps) \Jup}
        \le C_\psi \delta_\eps.
	\]
	Moreover, by assumption and Corollary~\ref{cor:submarkov}, see
        also \Rem{rescaling},
	\[
        \norm[L^\infty(\Omega_0)]{\psi(A_0)f - \Jdown \psi(A_\eps)
          \Jup f} \le 2 \norm[L^\infty(\Omega_0)]{f}.
	\]
	Thus by the Riesz-Thorin interpolation theorem
	\[
        \norm[\mathscr{L}(L^p(\Omega_0))]{\psi(A_0) - \Jdown
          \psi(A_\eps) \Jup} \le 2^{(p-2)/p} C_\psi^{p/2}
        \delta_\eps^{p/2} \to 0.
	\]
	Employing \Thm{hol.calc2} instead of \Cor{hol.calc.cor}, the
        same reasoning shows that
        \[
        \norm[\Lin{L^p(\Omega)}]{\Jup \phi(A_0) \Jdown -
          \phi(A_\eps)} \to 0.  \qedhere
        \]
\end{proof}

\section{Simple examples}\label{sec:simple_example}
In this section, we collect some examples to which the theory of the
previous section can be applied without much effort.  On the other
hand, our main application, which involves some delicate calculations,
is contained in a section by its own.

\subsection{Fourier series}\label{sec:fourier}
We start with an almost trivial example.  Let $a_0$ be sectorial form
with form domain $V_0$ on a Hilbert space $H_0$ as introduced in
\Sec{general}, and let $A_0$ be the associated operator.  Assume that
$V_0$ is compactly embedded into $H_0$, i.e., that $A_0$ has compact
resolvent.  For simplicity we also assume that $A_0$ is self-adjoint.

It is classical that in this situation there exists an orthonormal
basis $(e_k)_{k \in \N}$ of $H_0$ consisting of eigenvectors
of $A_0$ to eigenvalues $(\lambda_k)_{k \in \N}$, and
$\lambda_k \to \infty$. We can assume that $\lambda_k \le
\lambda_{k+1}$ for all $k \in \N$, and to make the notation
simpler we assume that $\lambda_1 > 0$.  Passing to an equivalent
norm,
\[
V_0 = \Bigset{f \in H_0} 
      {\normsqr[V_0]{f}
       = \sum_{k=1}^\infty \lambda_k \bigabs{\iprod[H_0]{f}{e_k}}^2 < \infty}.
\]

We explain how this situation can be embedded into our framework.
To this aim, it is convenient to index the Hilbert spaces and operators by
$n \in \N$ instead of $\eps$.
Let $P_n$ denote the orthogonal projection onto
$H_n \coloneqq V_n \coloneqq \operatorname{span}(e_k)_{k=1}^n$,
$\Jnup \coloneqq \Jnup_1 \coloneqq P_n$,
and $\Jndown \coloneqq \Jndown_1 \coloneqq \id$, where $H_n$ and $V_n$
carry the norms induced by $H_0$ and $V_0$, respectively, and let $a_n$
be the restriction of $a_0$ to $V_n$, so that $A_n$ is the restriction of $A_0$
to $H_n = H_n \cap D(A_0)$.

Now~\eqref{A1}, \eqref{A2} and~\eqref{A4} are trivial; in fact, these conditions
hold with $\delta_\eps = 0$ and $\kappa = 1$. Moreover,
\[
	\normsqr[H_0]{f - P_n f}
		= \sum_{k=n+1}^\infty \bigabs{\iprod[H_0]{f}{e_k}}
		\le \frac{1}{\lambda_{n+1}} \normsqr[V_0]{f},
\]
which implies both conditions in~\eqref{A3}. Finally, \eqref{A5} follows from
the fact that
\[
	a_0(f, u) - a_n(P_n f, u)
		= \sum_{k=1}^\infty \lambda_k \iprod[H_0]{f - P_n f}{e_k} \iprod[H_0]{e_k}{u}
		= 0
\]
for all $f \in V_0$ and $u \in V_n$.  Hence the forms $a_n$ converge
to $a_0$ in the sense of Definition~\ref{def:quasi-uni}.

The results in Section~\ref{sec:abstract} now tell us that
\[
	\norm[\mathscr{L}(H_0)]{P_n R(z,A_0) - R(z,A_n)} \to 0,
\]
as well as that other functions of these operators like the generated
semigroup converge in this sense. Convergence of the spectrum as in
Corollary~\ref{cor:conn_spec} and
Theorem~\ref{thm:eigenvalue_convergence} is of course built into this
approximation.

\subsection{Varying coefficients}\label{sec:coeff_conv}
Studying the convergence of elliptic operators with varying
coefficients is a very classical topic. In fact, the underlying spaces
typically do not change, so that the theory in Kato's book applies.
However, sometimes it is convenient to incorporate the coefficients
into the measure of the underlying $L^2$-space.  Although such
problems are still accessible by classical methods if all the norms
are uniformly equivalent, it is quite natural to work with varying
Hilbert spaces instead.

The following example is taken from~\cite{coc-fav:08}, where the
authors proved strong convergence in $\mathrm{C}(\overline{\Omega})$
as well as in $L^p$ for every $p \in [1,\infty)$ for a class of
elliptic operators with Wentzell boundary conditions.  Applying our
results, on the other hand, we obtain convergence in operator norm for
all $p \in (1,\infty)$, see Theorem~\ref{thm:Lpconv}.  Tracing the
constants in the proofs, we in addition have explicit error estimates,
and in particular we know the order of convergence, which answers the
open question that closes~\cite{coc-fav:08}. We also mention the later
article~\cite{coc-gol:08}, where these results are refined by
obtaining a detailed estimate on the order of convergence in operator
norm in $H^1$.

Let $\Omega$ be a bounded Lipschitz domain in $\R^n$.  Then $\Gamma
\coloneqq \partial\Omega$ becomes an oriented compact Riemannian
manifold with Riemannian metric $g$ in a natural way, where the charts
are Lipschitz regular, and the metric is bounded and measurable.  As
in the smooth case, there exists a volume measure $\sigma$ on
$\Gamma$, which coincides with the $(n-1)$-dimensional Hausdorff
measure.  Let $\Sob \Gamma$ be the completion of Lipschitz-continuous
functions $u$ on $\Gamma$ with respect to the norm defined by
\begin{equation*}
  \normsqr[\Sob \Gamma] u
  \coloneqq \int_\Gamma \bigl( \abssqr u + \abssqr[g]{\de u} \bigr) \dd\sigma,
\end{equation*}
where
\begin{equation}
  \label{eq:met.loc}
  \abssqr[g]{\de u}
  = \sum_{i,j=1}^{n-1} g^{ij} \partial_i u \partial_j \conj u
\end{equation}
in a chart $U \subset \Gamma$ with coordinates $\map {x_i} U \R$,
$i=1, \ldots, n-1$, and tangential vectors $\partial_i =
\partial/\partial x_i$.  Moreover, $(g^{ij})$ is the inverse of
$(g_{ij})=(g(\partial_i, \partial_j))$.
For an 
ad hoc definition of Lipschitz-regular manifolds, we refer to~\cite{ABE08}.

Now let the families $(\mathscr{A}_\eps)_{\eps \ge 0} \subset L^\infty(\Omega;\mathscr{L}(\C^n))$,
$(\beta_\eps)_{\eps \ge 0} \subset L^\infty(\Gamma)$,
$(\gamma_\eps)_{\eps \ge 0} \subset L^\infty(\Gamma)$
and $(q_\eps)_{\eps \ge 0} \subset \R$ be bounded in the respective spaces,
and assume that there exist $\alpha > 0$ and $b > 0$
such that for all $\eps \ge 0$ we have $q_\eps \ge \alpha$,
\[
	\Re \iprod[\C^n]{\mathscr{A}_\eps \xi}{\xi} \ge \alpha \abs{\xi}^2
\]
on $\Omega$ for all $\xi \in \C^n$ and $\beta_\eps \ge b$ on $\Gamma$.
For $\eps \ge 0$, define
\[
	H_\eps \coloneqq L^2(\Omega) \times L^2\Bigl(\Gamma;\frac{\dd\sigma}{\beta_\eps}\Bigr)
\]
and
\[
	V_\eps \coloneqq \bigl\{ (u,f) \in H^1(\Omega) \times H^1(\Gamma) : u|_\Gamma = f \bigr\} \subset H_\eps,
\]
and equip these spaces with the natural scalar products.
Note that the space $V_\eps$ and its norm do in fact not depend on $\eps$.

\begin{proposition}
  The family $(a_\eps)_{\eps \ge 0}$ of sesquilinear forms with form
  domains $V_\eps$ which is defined by
  \[
  a_\eps\bigl( (u,u|_\Gamma), (v,v|_\Gamma) \bigr) \coloneqq
  \int_\Omega \iprod[\C^n]{\mathscr{A_\eps} \nabla u}{\nabla v} +
  \int_\Gamma \gamma_\eps u \conj{v} \frac{\dd \sigma}{\beta_\eps} +
  q_\eps \int_\Gamma \iprod[g] {\de u} {\de v} \dd\sigma
  \]
	is equi-elliptic.
\end{proposition}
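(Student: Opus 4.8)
The plan is to verify the three defining conditions of \Def{equi_sec} --- boundedness \eqref{eq:form_bdd}, ellipticity \eqref{eq:form_coercive}, and the embedding estimate \eqref{eq:form_embedding} --- with constants $M$, $\omega$, $\alpha$, $c_V$ that do not depend on $\eps$. The one structural point to keep in mind is that, although $H_\eps$ genuinely depends on $\eps$ through the weight $\beta_\eps^{-1}$ on $\Gamma$, the hypotheses $\beta_\eps \ge b$ and $B \coloneqq \sup_\eps \norm[L^\infty(\Gamma)]{\beta_\eps} < \infty$ force $b \le \beta_\eps \le B$, so that all the norms $\norm[H_\eps]{\cdot}$ are uniformly equivalent to the fixed norm of $L^2(\Omega) \times L^2(\Gamma;\dd\sigma)$, while $V_\eps$ and its norm are already $\eps$-independent. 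I abbreviate the uniform bounds $C_1 \coloneqq \sup_\eps \norm[\infty]{\mathscr{A}_\eps}$, $C_2 \coloneqq \sup_\eps \norm[L^\infty(\Gamma)]{\gamma_\eps}$ and $C_3 \coloneqq \sup_\eps q_\eps$, all finite by assumption.

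First I would treat the embedding. Writing a generic element of $V_\eps$ as $(u,u|_\Gamma)$ with $u \in H^1(\Omega)$ and $u|_\Gamma \in H^1(\Gamma)$, one has
\[
  \normsqr[H_\eps]{(u,u|_\Gamma)}
  = \normsqr[L^2(\Omega)]{u} + \int_\Gamma \abssqr{u|_\Gamma}\,\frac{\dd\sigma}{\beta_\eps}
  \le \normsqr[H^1(\Omega)]{u} + \frac1b\normsqr[L^2(\Gamma)]{u|_\Gamma}
  \le \max\{1,b^{-1}\}\,\normsqr[V_\eps]{(u,u|_\Gamma)},
\]
using $\norm[L^2(\Gamma)]{u|_\Gamma} \le \norm[H^1(\Gamma)]{u|_\Gamma}$; thus $c_V \coloneqq \sqrt{\max\{1,b^{-1}\}}$ works for every $\eps$. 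For boundedness I would estimate the three summands of $a_\eps$ by the Cauchy--Schwarz inequality: the interior term by $C_1\norm[H^1(\Omega)]{u}\norm[H^1(\Omega)]{v}$, the zeroth-order boundary term by $b^{-1}C_2\norm[L^2(\Gamma)]{u|_\Gamma}\norm[L^2(\Gamma)]{v|_\Gamma}$, and the gradient boundary term by $C_3\norm[H^1(\Gamma)]{u|_\Gamma}\norm[H^1(\Gamma)]{v|_\Gamma}$; since $\norm[H^1(\Omega)]{u}$ and $\norm[H^1(\Gamma)]{u|_\Gamma}$ are both dominated by $\norm[V_\eps]{(u,u|_\Gamma)}$, summing gives \eqref{eq:form_bdd} with $M \coloneqq C_1 + b^{-1}C_2 + C_3$.

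The only mildly delicate step is ellipticity, because the boundary potential $\gamma_\eps$ is sign-indefinite (and possibly complex). Taking real parts, the uniform coercivity of $\mathscr{A}_\eps$ yields $\Re\int_\Omega\iprod[\C^n]{\mathscr{A}_\eps\nabla u}{\nabla u} \ge \alpha\normsqr[L^2(\Omega)]{\nabla u}$, the hypothesis $q_\eps \ge \alpha$ gives $q_\eps\int_\Gamma\abssqr[g]{\de u}\dd\sigma \ge \alpha\int_\Gamma\abssqr[g]{\de u}\dd\sigma$, and the boundary potential contributes at least $-b^{-1}C_2\normsqr[L^2(\Gamma)]{u|_\Gamma}$. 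Since $\int_\Gamma\abssqr[g]{\de u}\dd\sigma + \normsqr[L^2(\Gamma)]{u|_\Gamma} = \normsqr[H^1(\Gamma)]{u|_\Gamma}$ and $\omega\normsqr[H_\eps]{(u,u|_\Gamma)} \ge \omega\normsqr[L^2(\Omega)]{u} + \omega B^{-1}\normsqr[L^2(\Gamma)]{u|_\Gamma}$, choosing $\omega \coloneqq \max\{\alpha,\,B(\alpha + b^{-1}C_2)\}$ makes the coefficients of both $\normsqr[L^2(\Omega)]{u}$ and $\normsqr[L^2(\Gamma)]{u|_\Gamma}$ at least $\alpha$, whence
\[
  \Re a_\eps\bigl((u,u|_\Gamma),(u,u|_\Gamma)\bigr) + \omega\normsqr[H_\eps]{(u,u|_\Gamma)}
  \ge \alpha\bigl(\normsqr[H^1(\Omega)]{u} + \normsqr[H^1(\Gamma)]{u|_\Gamma}\bigr)
  = \alpha\normsqr[V_\eps]{(u,u|_\Gamma)}
\]
for every $\eps \ge 0$. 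As $C_1$, $C_2$, $C_3$, $b$, $B$ are $\eps$-independent, so are $M$, $\omega$, $\alpha$, $c_V$, and the family is equi-elliptic. I do not expect a genuine obstacle here: the argument is essentially bookkeeping, and the only thing to watch is that the constants are honestly uniform in $\eps$, which rests on the lower bound $\beta_\eps \ge b > 0$ (to control $\beta_\eps^{-1}$ from above) together with the uniform ellipticity of $\mathscr{A}_\eps$ and the uniform lower bound $q_\eps \ge \alpha$.
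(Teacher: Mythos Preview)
Your proof is correct and follows essentially the same approach as the paper: verifying the embedding, boundedness, and ellipticity estimates directly with constants depending only on the uniform bounds $b$, $B$, $C_1$, $C_2$, $C_3$, $\alpha$. The only cosmetic difference is that the paper keeps the negative boundary term in the weighted norm $\normsqr[L^2(\Gamma;\dd\sigma/\beta_\eps)]{u}$ (which is already part of $\normsqr[H_\eps]{\cdot}$), whereas you convert it to the unweighted $L^2(\Gamma)$-norm via $\beta_\eps \le B$; both bookkeeping choices yield the same conclusion.
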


\begin{proof}
  By the uniformity conditions on the coefficients,
  \[
		\normsqr[H_\eps]{u}
			= \normsqr[L^2(\Omega)]{u} + \normsqr[L^2(\Gamma;\frac{\dd\sigma}{\beta_\eps})]{u}
			\le \normsqr[H^1(\Omega)]{u} + \tfrac{1}{b} \normsqr[H^1(\Gamma)]{u}
			\le \bigl(1 + \tfrac{1}{b}\bigr) \normsqr[V_\eps]{u},
	\]
	which shows that the embedding of $V_\eps$ into $H_\eps$ has
        a uniform constant.  Moreover,
	\begin{align*}
		\abs{a_\eps\bigl( (u,u|_\Gamma), (v,v|_\Gamma) \bigr)}
			& \le \norm[L^\infty(\Omega,\mathscr{L}(\C^n))]{\mathscr{A}_\eps} \norm[L^2(\Omega)]{\nabla u} \norm[L^2(\Omega)]{\nabla v} \\
			& \qquad + \norm[\infty]{\gamma_\eps} \norm[L^2(\Gamma;\frac{\dd\sigma}{\beta_\eps})]{u} \norm[L^2(\Gamma;\frac{\dd\sigma}{\beta_\eps})]{v}
					+ q_\eps \norm[H^1(\Gamma)]{u} \norm[H^1(\Gamma)]{v},
	\end{align*}
	which shows that the forms are uniformly bounded, and
	\begin{align*}
		& \Re a_\eps\bigl( (u,u|_\Gamma), (u,u|_\Gamma) \bigr) \\
			& \qquad \ge \alpha \normsqr[L^2(\Omega)]{\nabla u} + \alpha \normsqr[L^2(\Gamma)]{\de u} - \norm[\infty]{\gamma_\eps} \normsqr[L^2(\Gamma;\frac{\dd\sigma}{\beta_\eps}))]{u} \\
			& \qquad \ge \alpha \Bigl( \normsqr[H^1(\Omega)]{u} + \normsqr[H^1(\Gamma)]{u} \Bigr)
				- \alpha \normsqr[L^2(\Omega)]{u} - \bigl( \norm[\infty]{\gamma_\eps} + \alpha \norm[\infty]{\beta_\eps} \bigr) \normsqr[L^2(\Gamma;\frac{\dd\sigma}{\beta_\eps})]{u},
	\end{align*}
	which shows that the ellipticity constants are uniform with
        respect to $\eps \ge 0$.
\end{proof}

\begin{remark}
  Integration by parts shows that (at least formally) the operator
  $A_\eps$ on $H_\eps$ associated with $a_\eps$ acts as
  \[
  A_\eps \bigl( (u,u|_\Gamma) \bigr) = \Bigl( -\operatorname{div}
  (\mathscr{A_\eps} \nabla u),  \beta_\eps\left((\mathscr{A_\eps} \nabla u) \cdot \nu\right) +
  \gamma_\eps u|_\Gamma - q_\eps \beta_\eps \Delta_\Gamma u|_\Gamma
  \Bigr),
  \]
  where $\nu$ denotes the outer unit normal of $\Omega$, and
  $\Delta_\Gamma$ is the Laplace-Beltrami operator on $\Gamma$, i.e.,
  $A_\eps$ is the operator considered in~\cite{coc-fav:08}.
\end{remark}

\begin{proposition}
  There exists a constant $K$ depending only on $b$ and
  $\norm[\infty]{\beta_0}$ and $\norm[\infty]{\gamma_0}$ such that the
  forms $a_\eps$ are $\delta_\eps$-$\kappa$-quasi-unitarily
  equivalent to $a_0$ for $\kappa = 1$ and
	\[
		\delta_\eps = \Err
                \Bigl(\norm[\infty]{\mathscr{A}_\eps - \mathscr{A}_0}
                + \norm[\infty]{\beta_\eps - \beta_0} +
                \norm[\infty]{\gamma_\eps - \gamma_0} + \abs{q_\eps -
                  q_0} \Bigr).
	\]
	Moreover, this equivalence can be realised by taking the
	identification operators to be the identity operators.
\end{proposition}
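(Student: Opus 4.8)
The plan is to verify directly the five conditions \eqref{A1}--\eqref{A5} of \Def{quasi-uni}, taking all four identification operators to be the relevant identity maps: $\Jup=\Jdown=\id$ on the common underlying vector space $L^2(\Omega)\times L^2(\Gamma)$ of the Hilbert spaces $H_\eps$, and $\Jup_1=\Jdown_1=\id$ on $V$. This is legitimate precisely because, as already observed, $V_\eps$ and its norm are the same for every $\eps\ge0$, so these maps are bounded between the spaces in question. With this choice one has $\Jup-\Jup_1=0$, $\Jdown-\Jdown_1=0$, $\Jdown\Jup=\id$ and $\Jup\Jdown=\id$, so that \eqref{A1} and \eqref{A3} hold with parameter $0$; and \eqref{A4} follows at once from the uniform two-sided bounds on $(\beta_\eps)$ (the $H_\eps$-norms are mutually comparable, with a constant controlled by $b$ and $\sup_\eps\norm[\infty]{\beta_\eps}$, in particular by $\norm[\infty]{\beta_0}$).

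For \eqref{A2} I would use the scalar reformulation recorded in \Rem{quasirem}: since $\Jup$ and $\Jdown$ act as the identity on components,
\[
  \iprod[H_\eps]{\Jup f}{u}-\iprod[H_0]{f}{\Jdown u}
  =\int_\Gamma f_\Gamma\,\conj{u_\Gamma}\Bigl(\frac1{\beta_\eps}-\frac1{\beta_0}\Bigr)\dd\sigma ,
\]
and $\bigl|\beta_\eps^{-1}-\beta_0^{-1}\bigr|=|\beta_\eps-\beta_0|/(\beta_\eps\beta_0)\le b^{-1}\,\norm[\infty]{\beta_\eps-\beta_0}\,\beta_0^{-1/2}\beta_\eps^{-1/2}$ by $\beta_\eps,\beta_0\ge b$; hence, by Cauchy--Schwarz in the weighted spaces $L^2(\Gamma;\dd\sigma/\beta_0)$ and $L^2(\Gamma;\dd\sigma/\beta_\eps)$, the right-hand side is at most $b^{-1}\norm[\infty]{\beta_\eps-\beta_0}\,\norm[H_0]{f}\,\norm[H_\eps]{u}$. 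This gives \eqref{A2} with $\delta_\eps=\Err(\norm[\infty]{\beta_\eps-\beta_0})$ and constant depending only on $b$.

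The main step is \eqref{A5}, which under the present choice of identification operators simply reads $|a_0(f,u)-a_\eps(f,u)|\le\delta_\eps\norm[V]{f}\norm[V]{u}$ for all $f,u\in V$. Writing $f=(f_\Omega,f_\Omega|_\Gamma)$ and $u=(u_\Omega,u_\Omega|_\Gamma)$, the difference $a_0(f,u)-a_\eps(f,u)$ splits into (i) the principal part $\int_\Omega\iprod[\C^n]{(\mathscr{A}_0-\mathscr{A}_\eps)\nabla f_\Omega}{\nabla u_\Omega}$, (ii) the Laplace--Beltrami part $(q_0-q_\eps)\int_\Gamma\iprod[g]{\de f_\Gamma}{\de u_\Gamma}\dd\sigma$, and (iii) the boundary-potential part $\int_\Gamma\bigl(\tfrac{\gamma_0}{\beta_0}-\tfrac{\gamma_\eps}{\beta_\eps}\bigr)f_\Gamma\conj{u_\Gamma}\dd\sigma$. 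Part (i) is bounded by $\norm[\infty]{\mathscr{A}_0-\mathscr{A}_\eps}\norm[V]{f}\norm[V]{u}$ and part (ii) by $|q_0-q_\eps|\norm[V]{f}\norm[V]{u}$, using that the $V$-norm dominates the full $H^1(\Omega)\times H^1(\Gamma)$-norm (so $\norm[L^2(\Omega)]{\nabla f_\Omega}\le\norm[V]{f}$ and $\int_\Gamma\abssqr[g]{\de f_\Gamma}\dd\sigma\le\normsqr[V]{f}$). For part (iii) one writes
\[
  \frac{\gamma_0}{\beta_0}-\frac{\gamma_\eps}{\beta_\eps}
  =\frac{\gamma_0(\beta_\eps-\beta_0)+\beta_0(\gamma_0-\gamma_\eps)}{\beta_0\beta_\eps},
\]
whose supremum is at most $b^{-2}\bigl(\norm[\infty]{\gamma_0}\norm[\infty]{\beta_\eps-\beta_0}+\norm[\infty]{\beta_0}\norm[\infty]{\gamma_0-\gamma_\eps}\bigr)$, and then estimates $\bigl|\int_\Gamma f_\Gamma\conj{u_\Gamma}\dd\sigma\bigr|\le\norm[L^2(\Gamma)]{f_\Gamma}\norm[L^2(\Gamma)]{u_\Gamma}\le\norm[V]{f}\norm[V]{u}$ via the trace inequality. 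Summing the three bounds yields \eqref{A5} with $\delta_\eps$ of the stated order and with the constant $K$ depending only on $b$, $\norm[\infty]{\beta_0}$ and $\norm[\infty]{\gamma_0}$; together with the first four conditions this shows that $a_\eps$ is $\delta_\eps$-$\kappa$-quasi-unitarily equivalent to $a_0$.

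The only genuinely delicate point is part (iii) of \eqref{A5}: both $\gamma$ and $\beta$ vary and they occur in the quotient $\gamma/\beta$, so one must split the difference as above and carefully track which norms the resulting constant depends on — and it is exactly the uniform lower bound $\beta_\eps\ge b>0$ that rules out any blow-up, which is why $K$ involves $b$ (and $\norm[\infty]{\beta_0}$, $\norm[\infty]{\gamma_0}$) but not finer information about the coefficients. Everything else is routine bookkeeping; in particular, since $V$ is equipped here with the plain graph norm of $H^1(\Omega)\times H^1(\Gamma)$ rather than a form norm, the trace and gradient estimates used above are entirely elementary.
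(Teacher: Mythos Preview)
Your proof is correct and follows the paper's approach almost line for line: identity maps as identification operators, trivial \eqref{A1} and \eqref{A3}, the same scalar-product computation for \eqref{A2} yielding the bound $b^{-1}\norm[\infty]{\beta_\eps-\beta_0}$, and the same three-term splitting for \eqref{A5} with the identical algebraic decomposition of $\gamma_0/\beta_0-\gamma_\eps/\beta_\eps$. Your remark on \eqref{A4}---that the norm of the identity between $H_0$ and $H_\eps$ is controlled by $b$ and $\norm[\infty]{\beta_0}$---is in fact more careful than the paper's bare assertion that $\kappa=1$ holds ``trivially''.
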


\begin{proof}
	Let $\Jup$, $\Jup_1$, $\Jdown$ and $\Jdown_1$ be the identity operators between the respective spaces.
	Then~\eqref{A1}, \eqref{A3} and~\eqref{A4} hold trivially with $\delta_\eps = 0$
	and $\kappa = 1$.
	
	To check~\eqref{A2}, fix $(u,f) \in H_0$ and $(v,g) \in H_\eps$. Then
	\begin{align*}
		& \bigabs{ \iprod[H_\eps]{\Jup (u,f)}{(v,g)} - \iprod[H_0]{(u,f)}{\Jdown (v,g)} } \\
			& \qquad \le \int_\Gamma \bigabs{ \tfrac{1}{\beta_\eps} - \tfrac{1}{\beta_0} } \, \abs{f} \abs{g} \dd\sigma
			\le \norm[\infty]{\beta_\eps - \beta_0} \int_\Gamma \frac{\abs{fg}}{b \sqrt{\beta_0 \beta_\eps}} \dd\sigma \\
			& \qquad \le \frac{\norm[\infty]{\beta_\eps - \beta_0}}{b} \norm[H_0]{(u,f)} \norm[H_\eps]{(v,g)},
	\end{align*}
	i.e., \eqref{A2} holds with $\delta_\eps = b^{-1} \norm[\infty]{\beta_\eps - \beta_0}$,
	compare Remark~\ref{rem:quasirem}.

	Finally, to check~\eqref{A5}, fix $(u,u|_\Gamma) \in V_0$ and $(v,v|_\Gamma) \in V_\eps$. Then
	\begin{align*}
		& \bigabs{ a_0\bigl( (u,u|_\Gamma), \Jdown_1 (v,v|_\Gamma) \bigr) - a_\eps\bigl( \Jup_1 (u,u|_\Gamma), (v,v|_\Gamma) \bigr) } \\
			& \qquad \le \norm[\infty]{\mathscr{A}_\eps - \mathscr{A}_0} \norm[L^2(\Omega)]{\nabla u} \norm[L^2(\Omega)]{\nabla v} \\
				& \qquad\qquad + \Bignorm[\infty]{\frac{\gamma_0}{\beta_0} - \frac{\gamma_\eps}{\beta_\eps}} \norm[L^2(\Gamma)]{u} \norm[L^2(\Gamma)]{v}
				+ \abs{q_0 - q_\eps} \norm[H^1(\Gamma)]{u} \norm[H^1(\Gamma)]{v}.
	\end{align*}
	Finally, note that
	\[
		\Bignorm[\infty]{ \frac{\gamma_0}{\beta_0} - \frac{\gamma_\eps}{\beta_\eps} }
			\le \frac{\norm[\infty]{\beta_\eps \gamma_0 - \beta_0 \gamma_\eps}}{b^2}
			\le \frac{\norm[\infty]{\gamma_0}}{b^2} \norm[\infty]{\beta_\eps - \beta_0}
				+ \frac{\norm[\infty]{\beta_0}}{b^2} \norm[\infty]{\gamma_0 - \gamma_\eps},
	\]
	which concludes the proof.
\end{proof}

It is easy to check using the Beurling-Deny criteria that the
semigroups $(\e^{-tA_\eps})$ are positive and
quasi-contractive in the norm of
\[
	L^\infty(\Omega) \times L^\infty\Bigl( \Gamma; \frac{\dd\sigma}{\beta_\eps} \Bigr),
\]
i.e.,
\[
\norm[L^\infty(\Omega) \times L^\infty( \Gamma;
\frac{\dd\sigma}{\beta_\eps} )]{\e^{-tA_\eps}} \le \e^{r t},
\]
where $r \in \R$ depends only on a lower bound of $(\gamma_\eps)_{\eps
  \ge 0}$.  By duality, we obtain uniform quasi-contractivity also in
the space
\[
	L^1(\Omega) \times L^1\Bigl( \Gamma; \frac{\dd\sigma}{\beta_\eps} \Bigr).
\]
In fact, the adjoint operator satisfies the same
conditions as $A_\eps$ itself.

Thus, by Theorem~\ref{thm:Lpconv} (and its dual version for $p < 2$)
we obtain the following result.  In fact, the proof of
Theorem~\ref{thm:Lpconv} provides an explicit error estimate.
\begin{theorem}
\label{thm:var.coeff}
Assume that $\mathscr{A}_\eps \to \mathscr{A}_0$, $\beta_\eps \to
\beta_0$, $\gamma_\eps \to \gamma_0$ and $q_\eps \to q_0$ uniformly on
$\Omega$ or $\Gamma$, respectively. Then for every $t \ge 0$ and $p
\in (1,\infty)$ the operators $\e^{-tA_\eps}$ converge to $\e^{-tA_0}$
as $\eps \to 0$ in the operator norm of $L^p(\Omega) \times L^p\bigl(
\Gamma; \dd\sigma \bigr)$ and the convergence rate can be estimated by
$\delta_\eps^{p/2}$.
\end{theorem}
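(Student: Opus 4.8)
The plan is to read the theorem off directly from \Thm{Lpconv} (and its $L^1$-dual version), using the two propositions just proved to supply the convergence of the forms and the Beurling-Deny discussion above to supply the required contractivity. First I would record that, by those two propositions, $(a_\eps)_{\eps\ge0}$ is equi-elliptic and $a_\eps$ is $\delta_\eps$-$1$-quasi-unitarily equivalent to $a_0$, realised by identity identification operators, with $\delta_\eps=\Err(\norm[\infty]{\mathscr{A}_\eps-\mathscr{A}_0}+\norm[\infty]{\beta_\eps-\beta_0}+\norm[\infty]{\gamma_\eps-\gamma_0}+\abs{q_\eps-q_0})$; under the stated uniform-convergence hypotheses this forces $\delta_\eps\to0$, so $(a_\eps)_{\eps>0}$ converges to $a_0$ in the sense of \Def{quasi-uni}.

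Next I would remove two harmless mismatches with the hypotheses of \Thm{Lpconv}. The semigroups $(\e^{-tA_\eps})_{t\ge0}$ are only \emph{quasi}-$L^\infty$-contractive, with a uniform bound $\e^{rt}$, so I would pass to the operators $\widetilde A_\eps\coloneqq A_\eps+r$, i.e., to the forms $\widetilde a_\eps\coloneqq a_\eps+r\iprod[H_\eps]{\cdot}{\cdot}$: this only shifts the vertex by $r$, and the defect in~\eqref{A5} grows by at most $\abs{r}\delta_\eps\norm[H_0]{\cdot}\norm[H_\eps]{\cdot}$ on account of the reformulation~\eqref{eq:adjoint_scalar} of~\eqref{A2}, so equi-ellipticity and $\Err(\delta_\eps)$-$1$-quasi-unitary equivalence persist, while now $\e^{-t\widetilde A_\eps}=\e^{-rt}\e^{-tA_\eps}$ is genuinely $L^\infty$-contractive; since $\e^{-tA_\eps}=\e^{rt}\e^{-t\widetilde A_\eps}$ (and likewise at $\eps=0$) nothing is lost. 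Also, the Hilbert spaces carry the $\eps$-dependent weight $1/\beta_\eps$ on $\Gamma$, but since $b\le\beta_\eps$ with $(\beta_\eps)$ uniformly bounded, the weighted $L^p$-norms on $\Gamma$ are uniformly equivalent to the unweighted ones, and the $L^\infty$-norms even coincide; hence the identity identification operators are $L^\infty$-isometries and I may take $c_\eps=1$ in \Thm{Lpconv}.

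For $p\in[2,\infty)$ I would then apply \Thm{Lpconv} with $\phi(z)=\e^{-tz}$, which for each fixed $t>0$ lies in $H^\infty_{00}$ of the sector of the renormalised operators (compare \Ex{semigroups}); since $\Jup=\Jdown=\id$ this yields $\norm[\Lin{L^p}]{\e^{-tA_0}-\e^{-tA_\eps}}=\Err(\delta_\eps^{p/2})$, which after the norm transfer of the previous step is exactly convergence of $\e^{-tA_\eps}$ to $\e^{-tA_0}$ in $\Lin{L^p(\Omega)\times L^p(\Gamma;\dd\sigma)}$ at rate $\delta_\eps^{p/2}$; the case $t=0$ is trivial because both operators are the identity. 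For $p\in(1,2)$ I would instead invoke the $L^1$-version of \Thm{Lpconv} announced in the preceding text: the adjoint form $a_\eps^\ast$ is of exactly the same type as $a_\eps$ (with $\gamma_\eps$ replaced by $\conj{\gamma_\eps}$), so the Beurling-Deny argument applies equally to $(a_\eps^\ast)$ and shows $(\e^{-tA_\eps^\ast})$ --- equivalently $(\e^{-tA_\eps})$ on $L^1$ --- to be uniformly quasi-contractive, which the renormalisation above again upgrades to genuine contractivity.

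The substance of the argument is already contained in \Thm{Lpconv} and in the two propositions, so the only point demanding real care is the bookkeeping in the second step: checking that the contractive renormalisation keeps every quasi-unitary-equivalence constant uniform in $\eps$, and that passing from the $\beta_\eps$-weighted $L^p$-norms to the unweighted ones does not degrade the advertised convergence rate $\delta_\eps^{p/2}$.
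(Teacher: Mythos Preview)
Your proposal is correct and follows essentially the same route as the paper: the paper simply cites \Thm{Lpconv} (and its dual version for $p<2$) together with the Beurling--Deny discussion preceding the theorem, leaving the renormalisation by $\e^{rt}$ and the passage between the $\beta_\eps$-weighted and unweighted $L^p$-norms implicit. Your write-up makes these two bookkeeping steps explicit, which is an improvement rather than a deviation.
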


\subsection{Degenerate equations in non-divergence form}\label{sec:degenerate}

Now we show that our machinery also applies to the approximation of
degenerate elliptic operators in non-divergence form.  More precisely,
we study the operator $m\Delta$ with Dirichlet boundary conditions on
a bounded domain $\Omega \subset \R^N$ for a possibly
degenerate function $m$. This operator has been studied for example by
Arendt and Chovanec~\cite{AC08}, who investigated under which
conditions its part in $\mathrm{C}_0(\Omega)$ generates a
$\mathrm{C}_0$-semigroup.

Let $m_0\colon \Omega \to (0,\infty)$ be a bounded, measurable
function and assume that $\frac{1}{m_0}\in L^q(\Omega)$, where $q = 1$
if $N=1$, $q > 1$ if $N = 2$, and $q = \frac{N}{2}$ if $N \ge 3$.
Define $m_\eps \coloneqq \max\{ m_0, \eps \}$ Our goal is to show that
the forms associated to the uniformly elliptic operators
$m_\eps\Delta$ converge to the (possibly degenerate) form associated
to the operator $m_0\Delta$ in the sense of our abstract framework as
$\eps \to 0$, where
\begin{multline*}
  \dom(m_\eps\Delta) 
  \coloneqq \bigset{u \in H^1_0(\Omega) 
          \cap L^2(\Omega;\tfrac{\dd x}{m_\eps(x)})}
         {\\ \exists f \in L^2(\Omega;\tfrac{\dd x}{m_\eps(x)})
           \text{ such that } \Delta u = \tfrac{f}{m_\eps}}, 
  \qquad
  (m_\eps\Delta) u  \coloneqq f.
\end{multline*}
Here, in the definition of $D(m_\eps\Delta)$, the expression $\Delta u$ has to
be understood as a distribution.

We start by introducing the forms that give rise to these operators.
Define
\[
	H_\eps \coloneqq L^2(\Omega;\tfrac{\dd x}{m_\eps(x)})
	\quad\text{and}\quad
	V_\eps \coloneqq H^1_0(\Omega).
\]
Note that $V_\eps \subset H_\eps$ even for $\eps = 0$ by the Sobolev
embedding theorem and H\"older's inequality due to the integrability
assumption $\frac{1}{m_0} \in L^q(\Omega)$.  Thus the natural inner
product
\begin{equation}
  \label{eq:nat.iprod}
  \iprod[V_\eps]{u}{v} \coloneqq \int_\Omega \nabla u \conj{\nabla v} +
     \iprod[H_\eps]{u}{v},
\end{equation}
turns $V_\eps \cap H_\eps = V_\eps$ into a Hilbert space.  Here we
used the equivalent norm $u \mapsto \norm{\nabla u}$ on
$H^1_0(\Omega)$. For the norm associated
to~\eqref{eq:nat.iprod}, the embedding constant of $V_\eps$ into
$H_\eps$ is at most $1$. We emphasise that in general the Hilbert
spaces $H_\eps$ do not agree with $H_0$, not even as sets.  We define
the form $a_\eps\colon V_\eps \times V_\eps \to \C$ by
\[
	a_\eps(f,g) \coloneqq \int_\Omega \nabla u \conj{\nabla v}.
\]
Then $a_\eps$ is bounded with constant $M=1$, and $a_\eps$ is elliptic
with constants $\omega = 1$ and $\alpha = 1$.  Hence the family
$(a_\eps)_{\eps \ge 0}$ is equi-elliptic in the sense of
Definition~\ref{def:equi_sec}.  The form $a_\eps$ is associated with
the operator $-m_\eps\Delta$ as defined above, compare~\cite{AC08}.

\begin{theorem}
  \label{thm:deg.eq}
  The forms associated with the operators $m_\eps\Delta$ and
  $m_0\Delta$ are $\delta_\eps$-$\kappa$-quasi unitarily equivalent
  for $\kappa = 1$ and a family $(\delta_\eps)_{\eps > 0}$ of real
  numbers such that $\delta_\eps \to 0$ as $\eps \to 0$.
\end{theorem}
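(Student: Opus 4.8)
The plan is to write the identification operators down explicitly and to exploit the fact that $a_\eps = a_0$ does not depend on $\eps$ at all: with the right choice, almost every condition of \Def{quasi-uni} is satisfied \emph{exactly}, and the only condition that requires a genuine estimate is~\eqref{A1}.

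Concretely, I would take $\kappa := 1$, let $\Jup \colon H_0 \to H_\eps$ be multiplication by $\sqrt{m_\eps/m_0}$ and $\Jdown \colon H_\eps \to H_0$ multiplication by $\sqrt{m_0/m_\eps}$ (both meaningful since $m_0$ and $m_\eps$ are positive), and take $\Jup_1 = \Jdown_1 := \id$ on $V_0 = V_\eps = H^1_0(\Omega)$, which — as is legitimate by \Rem{equiv_norm} and already arranged before the theorem — I equip with the norm $u \mapsto \norm[L^2(\Omega)]{\nabla u}$, uniformly equivalent to each $\norm[V_\eps]{\cdot}$. A one-line computation gives $\normsqr[H_\eps]{\Jup f} = \int_\Omega \tfrac{m_\eps}{m_0}\abs{f}^2\,\tfrac{\dd x}{m_\eps} = \normsqr[H_0]{f}$ and likewise $\normsqr[H_0]{\Jdown g} = \normsqr[H_\eps]{g}$, while $\iprod[H_\eps]{\Jup f}{g} = \int_\Omega f\conj{g}\,\tfrac{\dd x}{\sqrt{m_0 m_\eps}} = \iprod[H_0]{f}{\Jdown g}$ and $\Jdown\Jup = \id_{H_0}$, $\Jup\Jdown = \id_{H_\eps}$; thus $\Jup$ is unitary with $\Jup^{-1} = (\Jup)^\ast = \Jdown$, so~\eqref{A2}, \eqref{A3} and~\eqref{A4} hold with $\delta_\eps = 0$. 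Since in addition $a_0$ and $a_\eps$ coincide as forms on $V_0 = V_\eps$ while $\Jup_1$ and $\Jdown_1$ are the identity, \eqref{A5} holds with $\delta_\eps = 0$ too.

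It remains to estimate the two quantities in~\eqref{A1}. For $f \in H^1_0(\Omega)$ one has $(\Jup - \Jup_1)f = (\sqrt{m_\eps/m_0} - 1)f$ and $(\Jdown - \Jdown_1)f = (\sqrt{m_0/m_\eps} - 1)f$; because $m_\eps = m_0$ on $\{m_0 \ge \eps\}$, both integrands are supported in $\{m_0 < \eps\}$, where $m_\eps = \eps$, and the elementary pointwise bounds $(\sqrt{\eps/m_0} - 1)^2 \le \eps/m_0$ and $(1 - \sqrt{m_0/\eps})^2 \le 1$ reduce both quantities to
\[
  \normsqr[H_\eps]{(\Jup - \Jup_1)f},\ \normsqr[H_0]{(\Jdown - \Jdown_1)f}\ \le\ \int_{\{m_0 < \eps\}} \frac{\abs{f}^2}{m_0}\,\dd x .
\]
The right-hand side I would estimate exactly as in the discussion preceding the theorem: Hölder's inequality with exponents $q$ and $q' = q/(q-1)$ bounds it by $\norm[L^q(\Omega)]{\tfrac1{m_0}\1_{\{m_0 < \eps\}}}\,\norm[L^{2q'}(\Omega)]{f}^2$, and by the choice of $q$ the Sobolev embedding $H^1_0(\Omega) \hookrightarrow L^{2q'}(\Omega)$ (read as $L^\infty(\Omega)$ when $N = 1$) gives $\norm[L^{2q'}(\Omega)]{f}^2 \le C_S \normsqr[L^2(\Omega)]{\nabla f}$ with $C_S$ independent of $\eps$. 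Putting $\eta_\eps := \norm[L^q(\Omega)]{\tfrac1{m_0}\1_{\{m_0 < \eps\}}}$, this is~\eqref{A1} with $\delta_\eps := (C_S \eta_\eps)^{1/2}$, and all the other conditions hold a fortiori with this $\delta_\eps$. Finally, since $m_0 > 0$ on $\Omega$ the sets $\{m_0 < \eps\}$ decrease to the empty set as $\eps \downarrow 0$, so $\tfrac1{m_0}\1_{\{m_0 < \eps\}} \to 0$ pointwise a.e.\ while being dominated by $\tfrac1{m_0} \in L^q(\Omega)$; dominated convergence gives $\eta_\eps \to 0$, hence $\delta_\eps \to 0$, which completes the argument.

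I expect the only mildly delicate point to be this last step together with the Sobolev bookkeeping — matching the integrability exponent $q$ to the critical Sobolev exponent in the three cases $N = 1$, $N = 2$, $N \ge 3$; but this is precisely the computation already performed before the theorem to show that $(a_\eps)_{\eps \ge 0}$ is equi-elliptic and that $V_\eps \subset H_\eps$, so no new work is needed. Everything else is immediate, because the forms are literally $\eps$-independent and $\sqrt{m_\eps/m_0}$ is an honest unitary from $H_0$ onto $H_\eps$.
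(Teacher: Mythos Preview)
Your proof is correct and follows essentially the same approach as the paper: the identification operators $\Jup = \sqrt{m_\eps/m_0}\cdot$, $\Jdown = \sqrt{m_0/m_\eps}\cdot$, $\Jup_1 = \Jdown_1 = \id$ are identical, and the verification that~\eqref{A2}--\eqref{A5} hold with $\delta_\eps = 0$ is the same. The only difference is in the treatment of~\eqref{A1}: the paper (restricting to $N \ge 3$) applies H\"older directly to $\bigl|\tfrac{1}{\sqrt{m_0}} - \tfrac{1}{\sqrt{m_\eps}}\bigr|^2$ and the $L^{2N/(N-2)}$-norm of $u$, whereas you first use the explicit structure $m_\eps = \max\{m_0,\eps\}$ and elementary pointwise bounds to reduce to $\int_{\{m_0<\eps\}} \tfrac{|f|^2}{m_0}$, and then H\"older with $\tfrac{1}{m_0}\1_{\{m_0<\eps\}} \in L^q$ --- a slightly cleaner variant that handles all dimensions $N \ge 1$ uniformly.
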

\begin{proof}
	For simplicity, we assume that $N \ge 3$.
	Define
	\[
		\Jdown u \coloneqq \sqrt{\frac{m_0}{m_\eps}}u
		\quad\text{and}\quad
		\Jup f \coloneqq \sqrt{\frac{m_\eps}{m_0}}f.
	\]
	Then $\Jup\colon H_\eps \to H_0$ and $\Jdown\colon H_0 \to H_\eps$
	are isometric isomorphisms, hence unitary. Moreover, $\Jup$
	and $\Jdown$ are inverse to each other, so~\eqref{A2}, \eqref{A3}
	and~\eqref{A4} are satisfied with $\delta_\eps = 0$ and $\kappa = 1$.

	We take $\Jup_1$ and $\Jdown_1$ to be the identity. Then~\eqref{A5} is fulfilled with $\delta_\eps = 0$.
	Moreover, by H\"older's inequality
	\begin{align*}
		\normsqr[H_\eps]{\Jup u - \Jup_1 u}
			& \le \int_\Omega \Bigabs{\sqrt{\frac{m_\eps}{m_0}} - 1}^2 \; \frac{\abs{u}^2}{m_\eps}
			\le \normsqr[L^N(\Omega)]{\tfrac{1}{\sqrt{m_0}} - \tfrac{1}{\sqrt{m_\eps}}} \normsqr[L^{\frac{2N}{N-2}}(\Omega)]{u} \\
			& \le c^2 \delta_\eps^{2/N} \normsqr[V_0]{u}
	\end{align*}
	for all $u \in V_0$ with the embedding constant $c$ of $H^1_0(\Omega)$ into $L^{\frac{2N}{N-2}}(\Omega)$, and for
	\[
		\delta_\eps \coloneqq \int_\Omega \Bigabs{\frac{1}{\sqrt{m_0}} - \frac{1}{\sqrt{m_\eps}}}^N.
	\]
	Since
	\[
		\Bigabs{\frac{1}{\sqrt{m_0}} - \frac{1}{\sqrt{m_\eps}}}^N \le \frac{1}{m_0^{N/2}} \in L^1(\Omega)
	\]
	by assumption and in addition $m_\eps(x) \to m_0(x)$ for all
        $x \in \Omega$, we obtain that $\delta_\eps \to 0$ by the
        dominated convergence theorem.  The other inequality
        in~\eqref{A2} is proved in a similar way; in fact, the
        calculations are symmetric in $m_0$ and $m_\eps$.
\end{proof}

\section{Shrinking tubes with Robin boundary conditions}\label{sec:robin}

In this section we present our main example of convergence of
Laplacians acting in different Hilbert spaces.  We consider a family
of manifolds $X_\eps$ with boundary together with the corresponding
Laplacian $A_\eps$ with (in general) non-local boundary conditions.

We use Robin-type boundary condition of a certain scaling.  In
\Rem{scaling} we compare our approach with the ones used
in~\cite{grieser:pre07} and~\cite{CF08}.

\subsection{The metric graph model}
\label{sec:graph}

As an example of our approximation scheme, we consider a diffusive
process on a family of $(m+1)$-dimensional manifolds $(X,g_\eps)$
converging to a limit space given by a metric graph $X_0$. We will now
present the construction in detail.  We consider compact spaces only.
For the non-compact case, see \Rem{non-compact}.

Let $(\Vx,\Ed,\bd)$ be a directed graph where $\Vx$ and $\Ed$ are
finite sets, the set of \emph{vertices} and \emph{edges}.
Furthermore, $\map \bd \Ed {\Vx \times \Vx}$ encodes the graph
structure and orientation by associating to an edge $\edge \in \Ed$ the
pair $(\bd_-\edge, \bd_+ \edge)$ of its \emph{initial} and \emph{terminal}
vertex.  The orientation is only introduced for convenience.
  The definition of $A_0$ below does not depend on the choice of
  orientation. We
denote by
\begin{equation*}
  \Ed_\vx^\pm := \set {\edge \in \Ed}{\bd_\pm \edge = \vx} \qquadtext{and}
  \Ed_\vx := \Ed_\vx^- \dcup \Ed_\vx^+
\end{equation*}
the set of edges \emph{terminating in $\vx$ $(+)$, starting in $\vx$
  $(-)$} resp.\ \emph{adjacent with $\vx$}.  We denote by $\deg \vx :=
|\Ed_\vx|$ the \emph{degree} of a vertex $v$, i.e., the number of
edges terminating and starting in $\vx$.

Let $X_0$ be the topological graph associated to $(\Vx,\Ed,\bd)$,
i.e., the edges are $1$-dimensional intervals meeting in the vertices
according to the graph structure.  The \emph{metric} structure of
$X_0$ is defined by a function $\map \ell \Ed {(0,\infty)}$
associating to each edge $\edge$ a length $\ell_\edge$.  We parametrise
each edge with a coordinate $s=s_\edge$, i.e., we identify the directed
edge $\edge$ with the associated \emph{metric edge}
$I_\edge:=[0,\ell_\edge]$ in such a way that $\bd_-\edge$ corresponds to
$s=0$ and $\bd_+ \edge$ corresponds to $s=\ell_\edge$.  Introducing the
obvious distance function now turns the topological graph $X_0$ into a
metric space, the \emph{metric graph}.  Similarly, we have a natural
measure on $X_0$ given by the Lebesque measure on each edge $\dd s=\dd
s_\edge$.

The basic Hilbert space is
\begin{equation*}
  \HS_0 := \Lsqr {X_0} := \bigoplus_{\edge \in \Ed} \Lsqr {I_\edge},
\end{equation*}
with norm\footnote{Here and in the sequel, we use the notation
  $\norm[M] f$ for the $\Lsqrsymb$-norm of a measurable function $\map
  u M \C$ on a measure space $M$.} $\normsqr[X_0] f = \sum_{\edge \in
  \Ed}\normsqr[I_\edge]{f_\edge}$, where $\Lsqr {I_\edge}$ is the
usual $\Lsqrsymb$-space with norm given by $\normsqr[I_\edge]
{f_\edge} = \int_0^{\ell_\edge} |f_\edge|^2 \dd s$.

The Hilbert space, which will serve as domain of the sesquilinear form
defined below, is given by
\begin{equation*}
  \HSone_0 :=  \Sob {X_0}
  := \Cont {X_0} \cap \bigoplus_{\edge \in \Ed} \Sob {I_\edge}
\end{equation*}
with norm defined by
\begin{equation*}
  \normsqr[\HSone_0] f
  :=\sum_{\edge\in \Ed} 
        \bigl( \normsqr[I_\edge] {f'} + \normsqr[I_\edge] f \bigr),
\end{equation*}
i.e., a function in $\HSone_0$ is of class $\Sobsymb^1$ along the
edges and also continuous at each vertex.  Trivially, $\norm[\HS_0] f
\le \norm[\HSone_0] f$, i.e., we can choose $C_V=1$
in~\eqref{eq:form_embedding}.

We define the operator governing an evolution process via the
sesquilinear form
\begin{equation}
  \label{eq:qf.graph}
  a_0(f,g)
  :=  a_\Vxnull (f,g) + a_\Ednull(f,g)
\end{equation}
for functions $f,g \in \HSone_0$, where
\begin{align*}
  a_\Vxnull(f,g)
  &:= \sum_{\vx \in \Vx} \sum_{\wx\in \Vx}
         \gamma_{\vx\wx} f(\wx) \conj{g(\vx)} \deg \vx 
    \quad \text{and}\\
  a_\Ednull(f,g)
  &:= \iprod[X_0] {f'}{g'}
  = \sum_{\edge \in \Ed} \int_0^{\ell_\edge} f'_\edge \conj g'_\edge
            \dd s
\end{align*}
for a given coefficient matrix $(\gamma_{\vx\wx})_{\vx,\wx\in \Vx}$.

The following estimate follows easily from a standard Sobolev
  estimate on an interval.  In particular, we have
  \begin{equation}
    \label{eq:eval.vx}
     \normsqr[\Vx]{\ul f} := \sum_{\vx \in \Vx} |f(\vx)|^2 \deg \vx
     \le 4b \normsqr[X_0] {f'} 
       + \frac 8 b \normsqr[X_0] f
  \end{equation}
  for $f \in \Sob{X_0}$, where $\ul f= (f(\vx))_{\vx \in \Vx}$ and $0<
  b \le \min_\edge \ell_\edge$, see e.g.\
  \cite{exner-post:09}.  The next proposition is an
  easy consequence of~\eqref{eq:eval.vx}.
\begin{proposition}
  \label{prp:ell.0}
  The sesquilinear form $a_0$ is well-defined on $\HSone_0
  = \Sob {X_0}$.  Moreover, given $\alpha \in (0,1)$, there exists
  $\omega \ge 0$ such that
  \begin{equation*}
    \Re a_0(f,f) + \omega \normsqr[\HS_0] f \ge \alpha \normsqr[\HSone_0] f
  \end{equation*}
  for all $f \in \HSone_0 = \Sob{X_0}$.  In particular, $a_0$ is
  $\HS_0$-elliptic.
\end{proposition}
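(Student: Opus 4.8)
The approach is to isolate the (harmless) gradient term, dominate the vertex term by the elementary Sobolev estimate~\eqref{eq:eval.vx}, and choose the free parameter $b$ there small enough to absorb it. Well-definedness is immediate: the edge part $a_\Ednull(f,g)=\iprod[X_0]{f'}{g'}$ is a bounded sesquilinear form already on $\bigoplus_{\edge\in\Ed}\Sob{I_\edge}\supset\HSone_0$, while for the vertex part one only needs the point evaluations $f\mapsto f(\vx)$ to be well defined on $\HSone_0=\Sob{X_0}$, which holds by construction since such $f$ are continuous at every vertex. As $\Vx$ is finite, $a_\Vxnull$ is then a finite linear combination of products of these functionals, hence well defined (and, in fact, bounded on $\HSone_0$, cf.~\eqref{eq:form_bdd}).

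For the ellipticity estimate, recall $\normsqr[\HS_0]f=\normsqr[X_0]f$ and $\normsqr[\HSone_0]f=\normsqr[X_0]{f'}+\normsqr[X_0]f$, and split
\[
  \Re a_0(f,f)=\normsqr[X_0]{f'}+\Re a_\Vxnull(f,f)\ge\normsqr[X_0]{f'}-\bigabs{a_\Vxnull(f,f)}.
\]
Since $a_\Vxnull(f,f)$ is a sesquilinear expression in the finitely many numbers $\bigl(f(\vx)\bigr)_{\vx\in\Vx}$, there is a constant $C_\Vx\ge0$, depending only on the matrix $(\gamma_{\vx\wx})$ and the degrees, with $\bigabs{a_\Vxnull(f,f)}\le C_\Vx\normsqr[\Vx]{\ul f}$ for all $f\in\HSone_0$ (here $\deg\vx\ge1$ is used). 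Feeding in~\eqref{eq:eval.vx}, for every $b$ with $0<b\le\min_\edge\ell_\edge$ we obtain
\[
  \bigabs{a_\Vxnull(f,f)}\le 4bC_\Vx\normsqr[X_0]{f'}+\frac{8C_\Vx}{b}\normsqr[X_0]f.
\]

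Now fix $\alpha\in(0,1)$, put $b:=\min\bigl\{\min_\edge\ell_\edge,\ \tfrac{1-\alpha}{4C_\Vx}\bigr\}$ (the second entry read as $+\infty$ when $C_\Vx=0$), so that $4bC_\Vx\le1-\alpha$, and set $\omega:=\alpha+\tfrac{8C_\Vx}{b}\ge0$. Then
\[
  \Re a_0(f,f)+\omega\normsqr[\HS_0]f
  \ge(1-4bC_\Vx)\normsqr[X_0]{f'}+\Bigl(\omega-\tfrac{8C_\Vx}{b}\Bigr)\normsqr[X_0]f
  \ge\alpha\bigl(\normsqr[X_0]{f'}+\normsqr[X_0]f\bigr)=\alpha\normsqr[\HSone_0]f,
\]
which is the asserted inequality, and $\HS_0$-ellipticity is then exactly~\eqref{eq:form_coercive}. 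The only point that needs any care is this simultaneous choice of $b$ and $\omega$, so that the gradient term dominates the (in general indefinite and non-symmetric) vertex contribution while still retaining a factor at least $\alpha$; apart from that, the entire analytic content sits in~\eqref{eq:eval.vx}, so no genuine obstacle arises.
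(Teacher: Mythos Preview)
Your proof is correct and follows exactly the approach indicated in the paper, which simply states that the proposition is ``an easy consequence of~\eqref{eq:eval.vx}''. You have spelled out precisely this consequence: bound the vertex contribution via~\eqref{eq:eval.vx} with the free parameter $b$, then choose $b$ small enough to retain a factor $\alpha$ in front of the gradient term and absorb the rest into $\omega$.
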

It is easily seen that the corresponding operator $A_0$ acts as $(A_0
f)_\edge = -f''_\edge$ on each edge for $f \in \dom A_0$, where $f \in \dom
A_0$ iff $f \in \Cont{X_0} \cap \bigoplus_{\edge \in \Ed}
\Sob[2]{I_\edge}$ and
\begin{equation*}
  \frac 1 {\deg \vx} \sum_{\edge \in \Ed_\vx} f'_\edge(\vx)
    + \sum_{\wx \in \Vx} \gamma_{\vx \wx} f(\wx) = 0,
\end{equation*}
where $f'_\edge(\vx)=-f'_\edge(0)$ if $\vx = \bd_- \edge$ and
$f'_\edge(\vx)=f'_\edge(\ell_\edge)$ if $\vx = \bd_+ \edge$.  Observe that for
a non-diagonal matrix $\gamma$ the vertex conditions defined above
turn out to be non-local.

\subsection{The manifold model}
\label{sec:mfd}
In the sequel, we will construct a manifold $X$ according to the graph
$(\Vx, \Ed, \bd)$ together with a family of metrics $g_\eps$ such
that $(X,g_\eps)$ shrinks to the metric graph $X_0$ in a suitable
sense (see \Fig{mfd}).
\begin{figure}[h]
  \centering
\begin{picture}(0,0)%
\includegraphics[scale=0.7]{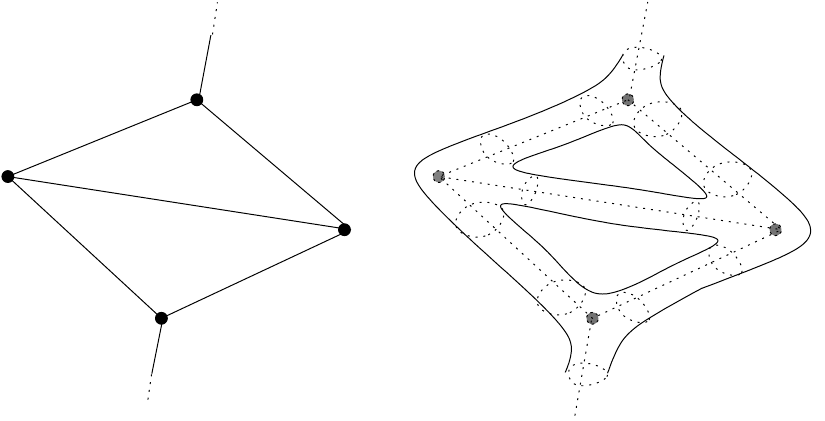}%
\end{picture}%
\setlength{\unitlength}{2901sp}%
%
\begingroup\makeatletter\ifx\SetFigFont\undefined%
\gdef\SetFigFont#1#2#3#4#5{%
  \reset@font\fontsize{#1}{#2pt}%
  \fontfamily{#3}\fontseries{#4}\fontshape{#5}%
  \selectfont}%
\fi\endgroup%
\begin{picture}(6285,3204)(301,-2913)
\put(3601,-2176){\makebox(0,0)[lb]{\smash{{\SetFigFont{12}{14.4}{\rmdefault}{\mddefault}{\updefault}{\color[rgb]{0,0,0}$X_\eps$}%
}}}}
\put(4611,-1581){\makebox(0,0)[lb]{\smash{{\SetFigFont{12}{14.4}{\rmdefault}{\mddefault}{\updefault}{\color[rgb]{0,0,0}$X_\edeps$}%
}}}}
\put(6571,-1546){\makebox(0,0)[lb]{\smash{{\SetFigFont{12}{14.4}{\rmdefault}{\mddefault}{\updefault}{\color[rgb]{0,0,0}$X_\vxeps$}%
}}}}
\put(316,-2176){\makebox(0,0)[lb]{\smash{{\SetFigFont{12}{14.4}{\rmdefault}{\mddefault}{\updefault}{\color[rgb]{0,0,0}$X_0$}%
}}}}
\put(2926,-1681){\makebox(0,0)[lb]{\smash{{\SetFigFont{12}{14.4}{\rmdefault}{\mddefault}{\updefault}{\color[rgb]{0,0,0}$\vx$}%
}}}}
\put(1476,-1451){\makebox(0,0)[lb]{\smash{{\SetFigFont{12}{14.4}{\rmdefault}{\mddefault}{\updefault}{\color[rgb]{0,0,0}$\edge$}%
}}}}
\end{picture}%
  \caption{The metric graph $X_0$ and the family of manifolds
    $(X,g_\eps)$ shrinking to the metric graph.  Here, $(X,g_\eps)$
    can be considered as a subset of $\R^3$, i.e., as a full cylinder
    with boundary consisting of the surface of the pipeline network.}
  \label{fig:mfd}
\end{figure}

Let $X$ be an $(m+1)$-dimensional connected manifold with boundary $\bd
X$.  We assume that $X$ decomposes as
\begin{equation}
  \label{eq:mfd.dec}
  X = \bigdisjcup_{\vx \in \Vx} X_\vx \cup \bigdisjcup_{\edge \in \Ed} X_\edge,
\end{equation}
where the \emph{vertex} and \emph{edge manifolds}, $X_\vx$ and
$X_\edge$, are compact connected subsets with non-empty interior.
Moreover, we assume that $\{X_\vx\}_{\vx \in \Vx}$ and $\{X_\edge\}_{\edge
  \in \Ed}$ are families of pairwise disjoint sets, respectively
(indicated by $\disjcup$), and
that
\begin{equation*}
  X_\edge \cong I_\edge \times Y_\edge,
\end{equation*}
where $Y_\edge$ is a compact, connected, $m$-dimensional manifold, the
\emph{transversal} or \emph{cross-section manifold} at the edge $\edge$.
Note that $Y_\edge$ has a boundary (as far as $\bd X \cap X_\edge$ is
non-empty).  In the sequel, we will identify $X_\edge$ with the product
$I_\edge \times Y_\edge$.  Finally, we assume that
\begin{equation*}
  Y_{\vxed} := X_\vx \cap X_\edge
  \cong
  \begin{cases}
    Y_\edge, & \text{if $\vx \in \bd \edge$,}\\
    \emptyset & \text{otherwise.}
  \end{cases}
\end{equation*}
Let $g$ be a smooth Riemannian metric on $X$ having product structure
on $X_\edge$, i.e.,
\begin{equation*}
  g_\edge = \dd s_\edge^2 + h_\edge
\end{equation*}
on $X_\edge$, where $h_\edge$ is a Riemannian metric on $Y_\edge$.
Here, and in the sequel, we use the subscripts $(\cdot)_\vx$ and
$(\cdot)_\edge$ to indicate the restriction to $X_\vx$ and $X_\edge$ for
objects on the manifold.  By assumption, $X_\vx$ is a manifold with
boundary in which the disjoint union of transversal manifolds
\begin{equation*}
  Y_\vx = \bigdisjcup_{\edge \in \Ed_\vx} Y_\vxed
\end{equation*}
is embedded.  In addition, the embedding is isometric.
We can think of $X$ as being constructed from the graph $(\Vx, \Ed,
\bd)$ and the family of transversal manifolds $\{Y_\edge\}_{\edge \in
  \Ed}$ and vertex manifolds $\{X_\vx\}_{\vx \in \Vx}$ according to
the graph. 

Let us now define the family of $\eps$-depending metrics on $X$ via
\begin{equation*}
  g_\vxeps := \eps^2 g_\vx \qquadtext{and}
  g_\edeps := \dd s_\edge^2 + \eps^2 h_\edge,
\end{equation*}
i.e., $(X,g_\eps)$ is obtained from the manifold $(X,g)$ by
$\eps$-homothetically shrinking of the vertex manifold $X_\vx$ and the
transversal manifold $Y_\edge$ of the edge manifold $X_\edge$ (thin tube).
Note that $(X,g_\eps)$ defines a smooth Riemannian manifold.  The
smoothness of the metric along the passage from $X_\vx$ to $X_\edge$ is
assured since the original metric $g=g_1$ is assumed to be smooth on
$X$.

If the metric graph $X_0$ is embedded in $\R^{m+1}$, then one can
choose a closed neighbourhood $X_\eps$ of $X_0$ in $\R^{m+1}$ with
smooth boundary and thickness of order $\eps$. The smoothness is
assumed only for simplicity; a Lipschitz boundary would be enough.
Note that a decomposition as in~\eqref{eq:mfd.dec} does not give an
\emph{isometric} decomposition, since the edge neighbourhood
$X_\edeps$ is slightly shorter than $\ell_\edge$ due to the presence
of the vertex neighbourhoods.  Nevertheless, this example can be
treated in the same way after a longitudinal rescaling of the edge
variable.  The error made is only of order $\eps$.  For details, we
refer to Lem.~2.7 of~\cite{exner-post:09} or
\cite[Prop.~5.3.10]{post:pre11}.

The decomposition~\eqref{eq:mfd.dec} induces a decomposition of the
boundary $\Gamma=\bd X$, an $m$-dimensional Riemannian manifold,
\begin{equation*}
  \Gamma = \bigdisjcup_{\vx \in \Vx} \Gamma_\vx \cup
  \bigdisjcup_{\edge \in \Ed} \Gamma_\edge,
\end{equation*}
where $\Gamma_\vx \subset \bd X_\vx$ and $\Gamma_\edge = I_\edge \times
\bd Y_\edge \subset \bd X_\edge$ are pairwise disjoint (or intersect only
in sets of $m$-dimensional measure $0$).
Moreover, we have
\begin{equation*}
  \bd X_\vx = \Gamma_\vx \cup \bigcup_{\edge \in \Ed_\vx} Y_\vxed
     \qquadtext{and}
  \bd X_\edge = \Gamma_\edge \cup \bigcup_{\vx  \in \bd \edge} Y_\vxed.
\end{equation*}

The Riemannian measure associated with a Riemannian manifold $(M,
g_\eps)$ is denoted by $\dd M_\eps$.  In particular, we have
\begin{subequations}
  \label{eq:riem.dens}
  \begin{align}
    \dd X_\vxeps &= \eps^{m+1}\dd X_\vx,&
    \dd \Gamma_\vxeps & = \eps^m \dd \Gamma_\vx,\\
    \dd X_\edeps &= \eps^m \dd X_\edge = \eps^m \dd s_\edge \dd Y_\edge,&
    \dd \Gamma_\edeps &= \eps^{m-1} \dd \Gamma_\edge 
                 = \eps^{m-1} \dd s_\edge \dd \bd Y_\edge.
  \end{align}
\end{subequations}
We will use the abbreviation $X_\eps$, $X_\vxeps$ etc.\ for the measure
spaces $(X,\dd X_\eps)$, $(X_\vx, \dd X_\vxeps)$, etc.

Here and in the sequel, we use the notation
\begin{equation*}
  X_\Ed := \bigcup_{\edge \in \Ed} X_\edge, \qquad
  X_\Vx := \bigcup_{\vx \in \Vx} X_\vx, \qquad
  \Gamma_\Ed := \bigcup_{\edge \in \Ed} \Gamma_\edge
\end{equation*}
etc. for the (disjoint) union of the corresponding manifolds.
Similarly, $X_\Edeps$, $X_\Vxeps$, $\Gamma_\Edeps$ etc.\ denote the
corresponding Riemannian manifolds with the $\eps$-depending metric.

The basic Hilbert space we are working in is $\HS_\eps := \Lsqr
{X_\eps}$.  We often write $\norm[X_\eps] u$ instead of $\norm[\Lsqr
{X_\eps}] u$ for the corresponding norm.
The $\eps$-dependence of the norms for the scaled spaces can easily
calculated using~\eqref{eq:riem.dens}; e.g.~ for $X_\vxeps$ and
$X_\edeps$ we have
\begin{equation*}
  \normsqr[X_\vxeps] u = \eps^{m+1} \int_{X_\vx} \abs u^2 \dd X_\vx
  \qquadtext{and}
  \normsqr[X_\edeps] u = \eps^m
      \int_{I_\edge \times Y_\edge} \abs u^2 \dd Y_\edge \dd x_\edge.
\end{equation*}

Let $\HSone_\eps := \Sob {X_\eps}$ be the Sobolev space of first order
defined as the completion of smooth functions on $X_\eps$ 
with respect to the norm defined by
\begin{equation*}
  \normsqr[\Sob{X_\eps}] u = \normsqr[\Lsqr {X_\eps}] u +
\normsqr[\Lsqr{X_\eps}]{\de u},
\end{equation*}
where $\normsqr[\Lsqr{X_\eps}]{\de u}=\int_X \abssqr[g_\eps] {\de u}
\dd X_\eps$, and $\abssqr[g_\eps]{\de u}$ is given
  in~\eqref{eq:met.loc}.  Trivially, $\norm[\HS_\eps] u \le \norm[\HSone_\eps] u$,
i.e., we can choose $C_V=1$ in~\eqref{eq:form_embedding}.

We define a sesquilinear form by
\begin{equation}
  \label{eq:qf.mfd}
  a_\eps(u,v)
    = \int_X \iprod[g_\eps]{\de u} {\de v} \dd X_\eps
    + \int_\Gamma \beta_\eps u \conj v \dd \Gamma_\eps
    + \int_{\Gamma \times \Gamma} \gamma_\eps
           (u \otimes \conj v) 
         \dd \Gamma_\eps \otimes \dd \Gamma_\eps
\end{equation}
for functions $u \in \HSone_\eps = \Sob{X_\eps}$.  Here $\iprod
[g_\eps] \cdot \cdot$ is the (pointwise) inner product on $T^*X$
defined via the Riemannian metric $g_\eps$.  Moreover, $(u \otimes
\conj v)(x_1,x_2):=u(x_1) \conj{v(x_2)}$.  We assume that $\beta_\eps
\in \Linfty \Gamma$ and $\gamma_\eps \in \Lsqr{\Gamma \times \Gamma}$.
In this case, $a_\eps$ is indeed well-defined for all $u \in
\Sob{X_\eps}$ (see \Prp{ell.eps}).

The associated operator is given by $A_\eps u = -\Delta u = \de^* \de u$
for functions $u \in \dom A_\eps$.  Moreover, $u \in \dom A_\eps$ iff
$u \in \Sob[2]{X_\eps}$ and
\begin{equation*}
  \frac 1 \eps \normder u + \beta_\eps u 
  + \int_\Gamma \gamma_\eps u \dd \Gamma_\eps
  = 0,
\end{equation*}
where the integral is taken with respect to the first variable of
$\map {\gamma_\eps} {\Gamma \times \Gamma} \C$, and where $\normder$
is the normal derivative on $X$.

\begin{remark}
  \label{rem:scaling}
  Let us illustrate the effect of scaling the underlying space for
  Robin boundary conditions in a simple example: Assume that the
  transversal manifold $Y_\edeps$ is isometric to the interval
  $[0,\eps]$ and that there is no non-local contribution, i.e.,
  $\gamma_\eps=0$.  We consider the Laplacian with Neumann boundary
  conditions at $0$ and Robin boundary conditions at $\eps$, i.e.,
  \begin{equation*}
    v'(\eps) + \beta_\eps v(\eps) = 0,
  \end{equation*}
  where $\beta_\eps \in \R \setminus \{0\}$ is the coupling constant.
  An eigenfunction for the Laplacian $\Delta v = -v''$ with Neumann
  boundary conditions at $0$ is of the form
  \begin{equation*}
    v(s)= v(0) \cos (\omega s) \qquadtext{and}
    v(s)= v(0) \cosh (\omega s),
  \end{equation*}
  with eigenvalue $\omega^2$ and $-\omega^2$ if $\beta_\eps>0$ and
  $\beta_\eps<0$, respectively.  The lowest eigenvalue $\mu(\eps)$ is
  then of the same order as $\eps^{-1}\beta_\eps$ for $\eps \to 0$.

  If one chooses \emph{scale-invariant} Robin boundary conditions,
  i.e., $\beta_\eps=\eps^{-1} \beta$ for some $\beta \ne 0$, as e.g.\
  in \cite{grieser:pre07} and~\cite{CF08}, then the lowest eigenvalue
  $\mu(\eps)$ is of order $\eps^{-2}$ and one has to substract the
  divergent term $\mu(\eps)$ in order to expect convergence to a limit.

  Here, we use a different approach.  We assume that the coupling
  constant $\beta_\eps$ is of order $\eps^{3/2}$ on the edge
  neighbourhoods, see~\eqref{eq:ass.beta.gamma.eps'} (actually,
  $\eps^{1+\eta}$ would be enough for some $\eta>0$).  In this case,
  the lowest (transversal) eigenvalue $\mu(\eps)$ is of order
  $\eps^{1/2}$ (resp.\ $\eps^\eta$), and converges to $0$.  We are
  then in the situation, where the Robin Laplacian is close to the
  Neumann Laplacian.  This is the reason why we are in the same
  setting as in the (simpler) Neumann boundary condition case treated
  already in~\cite{post:06}.
\end{remark}

\subsection{Some estimates on the manifold}
\label{sec:est.mfd}

Let us collect some estimates needed later on.  Basically, we need a
Sobolev trace estimate.  Let $M$ be a compact Riemannian manifold of
dimension $n$ with metric $g$ and boundary $\bd M$, and $B$ a compact
$(n-1)$-dimensional submanifold of $\bd M$ carrying the induced
metric.  It follows that there is a constant $\ctr {B,M}>0$ such that
\begin{subequations}
  \begin{equation}
    \label{eq:sob.tr.gen}
    \normsqr[B] u
    \le \ctr {B,M} \bigl( \normsqr[M] {\de u} + \normsqr[M] u \bigr)
  \end{equation}
  for all $u \in \Sob M$.  The constant $\ctr {B,M}$ geometrically
  depends on the shape of $B$ embedded in $M$.

  If we scale the metric by a factor, $g_\eps = \eps^2 g$, then the
  estimate changes to
  \begin{equation}
    \label{eq:sob.tr.gen.eps}
    \normsqr[B_\eps] u
    \le \ctr {B,M} 
    \Bigl(
    \eps \normsqr[M_\eps] {\de u} 
    + \frac 1 \eps \normsqr[M_\eps] u
    \Bigr),
  \end{equation}
\end{subequations}
using $\dd M_\eps = \eps^n \dd M$, $\dd B_\eps = \eps^{n-1} \dd B$ and
the fact that $|\de u|^2_{g_\eps}= \eps^{-2}|\de u|^2_g$.  Here,
$B_\eps$ and $M_\eps$ denote the corresponding Riemannian manifolds
with the $\eps$-depending metric.

We will apply this trace estimate basically in the situations
$(\Gamma_\vx, X_\vx)$, $(Y_\vx, X_\vx)$ and $(\bd Y_\edge, Y_\edge)$.  Let
us first prove the following lemma, which shows that the trace
estimate for $(\bd Y_\edge, Y_\edge)$ gives a trace estimate for the
product $(\Gamma_\edge, X_\edge) = (I_\edge \times \bd Y_\edge, I_\edge \times
Y_\edge)$:
\begin{lemma}
  \label{lem:sob.tr.ed}
  We have
  \begin{equation*}
    \normsqr[\Gamma_\edeps] u
    \le \ctr {\bd Y_\edge, Y_\edge} 
        \Bigl(\eps \normsqr[X_\edeps]{\de_{Y_\edge} u} 
              + \frac 1 \eps \normsqr [X_\edeps] u
        \Bigr)
  \end{equation*}
  for all $u \in \Sob {X_\edeps}$, where $\de_{Y_\edge} u$ denotes the
  exterior derivative with respect to the second variable of $I_\edge
  \times Y_\edge$.
\end{lemma}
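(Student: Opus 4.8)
The plan is to reduce the claim to a fibrewise application of the scaled Sobolev trace estimate~\eqref{eq:sob.tr.gen.eps} on the cross-section $Y_\edge$, followed by an integration along the edge parameter $s_\edge$. As a preliminary I would record that, since the metric $g_\edeps=\dd s_\edge^2+\eps^2 h_\edge$ is a product on $X_\edge\cong I_\edge\times Y_\edge$, the associated Riemannian measures factor as $\dd X_\edeps=\eps^m\,\dd s_\edge\,\dd Y_\edge$ and $\dd\Gamma_\edeps=\eps^{m-1}\,\dd s_\edge\,\dd\bd Y_\edge$ (cf.~\eqref{eq:riem.dens}), that the restriction of $g_\edeps$ to each fibre $\{s_\edge\}\times Y_\edge$ equals $\eps^2 h_\edge$, and that the purely transversal $1$-form $\de_{Y_\edge}u$ satisfies $\abssqr[g_\edeps]{\de_{Y_\edge}u}=\eps^{-2}\abssqr[h_\edge]{\de_{Y_\edge}u}$ pointwise. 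By density of smooth functions in $\Sob{X_\edeps}$ together with continuity of the trace onto $\Gamma_\edge$, it suffices to prove the inequality for smooth $u$.

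For such a $u$ and for every fixed $s_\edge\in I_\edge$, the slice $u(s_\edge,\cdot)$ lies in $\Sob{Y_\edge}$, and I would apply~\eqref{eq:sob.tr.gen.eps} to the compact $m$-dimensional manifold $M=Y_\edge$, its boundary submanifold $B=\bd Y_\edge$, the scaling factor $\eps$, and the function $u(s_\edge,\cdot)$; here $Y_\edeps$ and $\bd Y_\edeps$ denote $Y_\edge$ and $\bd Y_\edge$ carrying the scaled metric $\eps^2 h_\edge$. This gives
\[
  \normsqr[\bd Y_\edeps]{u(s_\edge,\cdot)}
    \le \ctr{\bd Y_\edge,Y_\edge}
        \Bigl(\eps\,\normsqr[Y_\edeps]{\de_{Y_\edge}u(s_\edge,\cdot)}
              + \tfrac1\eps\,\normsqr[Y_\edeps]{u(s_\edge,\cdot)}\Bigr),
\]
and the crucial point is that the constant $\ctr{\bd Y_\edge,Y_\edge}$ depends only on the way $\bd Y_\edge$ is embedded in $Y_\edge$, hence is independent of $s_\edge$.

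Finally I would integrate this inequality over $s_\edge\in I_\edge$ with respect to $\dd s_\edge$ and invoke Fubini's theorem. By the measure factorisations recorded above, $\int_{I_\edge}\normsqr[\bd Y_\edeps]{u(s_\edge,\cdot)}\,\dd s_\edge=\normsqr[\Gamma_\edeps]u$ and $\int_{I_\edge}\normsqr[Y_\edeps]{u(s_\edge,\cdot)}\,\dd s_\edge=\normsqr[X_\edeps]u$, while $\int_{I_\edge}\normsqr[Y_\edeps]{\de_{Y_\edge}u(s_\edge,\cdot)}\,\dd s_\edge=\normsqr[X_\edeps]{\de_{Y_\edge}u}$ by the transversal identity for $\de_{Y_\edge}u$; this yields exactly the asserted estimate.

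I do not expect a genuine obstacle: the argument is essentially measure-theoretic bookkeeping once~\eqref{eq:sob.tr.gen.eps} is in hand. The one point that genuinely needs care is tracking the powers of $\eps$, since the scaling here is only partial --- the transversal directions are shrunk by $\eps$ while the longitudinal one is not --- which is precisely why the bound is asymmetric, with a factor $\eps$ on the derivative term, a factor $\tfrac1\eps$ on the $\Lsqrsymb$-term, and no further $\eps$-dependence. A secondary, purely technical, point is the justification of the slicing and density step, i.e.\ that $u\in\Sob{X_\edeps}$ admits $\Sob{Y_\edge}$-traces on almost every fibre and a well-defined $\Lsqrsymb$-trace on $\Gamma_\edge$; this is standard for product domains and is dealt with by approximation with smooth functions.
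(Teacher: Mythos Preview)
Your proposal is correct and follows essentially the same approach as the paper: apply the scaled Sobolev trace estimate~\eqref{eq:sob.tr.gen.eps} fibrewise to $u(s_\edge,\cdot)$ on $(\bd Y_\edge, Y_\edge)$, then integrate over $s_\edge \in I_\edge$ and pass to general $u$ by density. The paper's proof is slightly terser about the measure bookkeeping, but the argument is the same.
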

\begin{proof}
  Let $u \in \Sob{X_\eps}$ be smooth, then
  \begin{align*}
    \normsqr[\Gamma_\edeps] u
    = \int_0^{\ell_\edge} \normsqr[\bd Y_\edeps] {u(s)} \dd s
    &\le
     \int_0^{\ell_\edge} \ctr  {\bd Y_\edge, Y_\edge}
      \Bigl(
         \eps \normsqr[Y_\edeps]{\de_{Y_\edge} u(s)}
         + \frac 1 \eps \normsqr [Y_\edeps] {u(s)}
      \Bigr)
      \dd s\\
     &= \ctr  {\bd Y_\edge, Y_\edge}
      \Bigl(
        \eps \normsqr[X_\edeps]{\de_{Y_\edge} u}
        + \frac 1 \eps \normsqr [X_\edeps] u
      \Bigr)
  \end{align*}
  using the Sobolev trace estimate~\eqref{eq:sob.tr.gen.eps} for $\bd
  Y_\edge \subset Y_\edge$ pointwise.  Since smooth functions are dense in
  $\Sob{X_\edge}$ and since the operators $\Sob{X_\edge} \to
  \Lsqr{\Gamma_\edge}$, $u \mapsto u\restr {\Gamma_\edge}$ and
  $\Sob{X_\edge} \to \Lsqr{X_\edge,T^*X_\edge}$, $u \mapsto \de_{Y_\edge} u$
  are bounded, the estimate also holds for $u \in \Sob{X_\edge}$.
\end{proof}

It follows from these trace estimates that the global trace operator
$u \mapsto u \restr \Gamma$ is bounded, either as operator $\Sob X \to
\Lsqr \Gamma$ or $\Sob{X_\eps} \to \Lsqr {\Gamma_\eps}$ (see
also \Prp{ell.eps} below).

In the following, we need several averaging operators. 
Let 
\begin{equation}
\label{eq:av.vx}
  \avint_\vx u := \dashint_{X_\vx} u \dd X_\vx.
\end{equation}
denote the average value of $u$ on $X_\vx$ (and also the corresponding
constant function on $X_\vx$).  Here, $\dashint_M := (\volume
M)^{-1}\int$ is the normalised volume integral.
Denote by $\lambda_2(X_\vx)$ the second (first non-vanishing)
eigenvalue of the Neumann problem on $X_\vx$.   Let us now compare the
average of $u$ on $\Gamma_\vx$ with the average of $u$ on $X_\vx$:
\begin{lemma}
  \label{lem:sob.diff2}
  For all $u \in \Sob {X_\vx}$, we have
  \begin{equation*}
    \eps^m \normsqr[\Gamma_\vx]{u - \avint_\vx u}
     \le
    \eps \ctr{\Gamma_\vx, X_\vx}
           \Bigl( \frac 1 {\lambda_2^\Neu(X_\vx)} + 1 \Bigr)
         \normsqr[X_\vxeps] {\de u}.
  \end{equation*}
\end{lemma}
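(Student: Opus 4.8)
The plan is to deduce the estimate from the scaled Sobolev trace inequality~\eqref{eq:sob.tr.gen.eps} applied to the mean-zero part of $u$, the only additional ingredient being the Poincaré inequality on $X_\vx$ furnished by the second Neumann eigenvalue. First I would set $v \coloneqq u - \avint_\vx u$. Since $\avint_\vx u$ is a constant function, $\de v = \de u$; and by the definition~\eqref{eq:av.vx} of the normalised average, $\int_{X_\vx} v \dd X_\vx = 0$, i.e.\ $v$ is orthogonal in $\Lsqr{X_\vx}$ to the constants. Note also that $\avint_\vx$, being a quotient of integrals, is unchanged when $\dd X_\vx$ is replaced by $\dd X_\vxeps = \eps^{m+1}\dd X_\vx$, so $v$ is mean-zero with respect to the $\eps$-scaled volume measure as well.

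Next I would apply~\eqref{eq:sob.tr.gen.eps} with $B = \Gamma_\vx \subset \bd X_\vx$, $M = X_\vx$, to the function $v$; this is permissible for every $v \in \Sob{X_\vx}$ since smooth functions are dense in $\Sob{X_\vx}$ and both the trace map $\Sob{X_\vx} \to \Lsqr{\Gamma_\vx}$ and $w \mapsto \de w$ are bounded. Using $\de v = \de u$ and $\dd \Gamma_\vxeps = \eps^m \dd \Gamma_\vx$ from~\eqref{eq:riem.dens}, this reads
\[
  \eps^m\normsqr[\Gamma_\vx]{u - \avint_\vx u}
  = \normsqr[\Gamma_\vxeps]{v}
  \le \ctr{\Gamma_\vx, X_\vx}
      \Bigl(\eps\normsqr[X_\vxeps]{\de u} + \frac 1 \eps\normsqr[X_\vxeps]{v}\Bigr),
\]
so it remains only to show $\eps^{-1}\normsqr[X_\vxeps]{v} \le \eps\,(\lambda_2^\Neu(X_\vx))^{-1}\normsqr[X_\vxeps]{\de u}$.

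For this last step I would invoke the min-max characterisation of $\lambda_2^\Neu(X_\vx)$: every $w \in \Sob{X_\vx}$ with $\int_{X_\vx} w\dd X_\vx = 0$ satisfies $\normsqr[X_\vx]{w} \le (\lambda_2^\Neu(X_\vx))^{-1}\normsqr[X_\vx]{\de w}$ on $(X_\vx, g)$. Under the homothety $g_\eps = \eps^2 g$ one has $\dd X_\vxeps = \eps^{m+1}\dd X_\vx$ and $\abssqr[g_\eps]{\de w} = \eps^{-2}\abssqr[g]{\de w}$ (as already used to pass from~\eqref{eq:sob.tr.gen} to~\eqref{eq:sob.tr.gen.eps}), so the volume factors cancel in the Rayleigh quotient and the inequality rescales to $\normsqr[X_\vxeps]{w} \le \eps^2(\lambda_2^\Neu(X_\vx))^{-1}\normsqr[X_\vxeps]{\de w}$. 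Applying this with $w = v$, dividing by $\eps$, and using $\de v = \de u$ gives the required bound; inserting it into the displayed inequality yields
\[
  \eps^m\normsqr[\Gamma_\vx]{u - \avint_\vx u}
  \le \ctr{\Gamma_\vx, X_\vx}\Bigl(\eps + \frac{\eps}{\lambda_2^\Neu(X_\vx)}\Bigr)\normsqr[X_\vxeps]{\de u},
\]
which is exactly the assertion.

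I do not expect a genuine obstacle here: the argument is a single application of the scaled trace estimate together with a standard Poincaré inequality. The only points requiring mild care are the bookkeeping of $\eps$-powers via~\eqref{eq:riem.dens} and the metric scaling, the observation that $\de$ annihilates constants and that the normalised average is scale-invariant, and the density argument behind applying~\eqref{eq:sob.tr.gen.eps} to an arbitrary Sobolev function.
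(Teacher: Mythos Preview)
Your proof is correct and follows essentially the same approach as the paper: apply the Sobolev trace estimate to the mean-zero function $v=u-\avint_\vx u$, then control $\normsqr{v}$ via the Poincar\'e inequality coming from $\lambda_2^\Neu(X_\vx)$. The only cosmetic difference is that the paper applies the \emph{unscaled} trace estimate~\eqref{eq:sob.tr.gen} and inserts the scaling $\eps^{m-1}\normsqr[X_\vx]{\de u}=\normsqr[X_\vxeps]{\de u}$ at the very end, whereas you use the scaled version~\eqref{eq:sob.tr.gen.eps} and rescale the Poincar\'e inequality instead; the bookkeeping is equivalent.
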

\begin{proof}
  Interpreting $\wt u:= u - \avint_\vx u$ as a function on
  $\Gamma_\vx$ we have
  \begin{equation*}
    \eps^m  \normsqr[\Gamma_\vx] {\wt u}
     \le
    \eps^m  \ctr{\Gamma_\vx, X_\vx} 
          \bigl( \normsqr[X_\vx] {\wt u}
                + \normsqr[X_\vx] {\de u} \bigr)
    \le
    \eps^m \ctr{\Gamma_\vx, X_\vx}
           \Bigl( \frac 1 {\lambda_2^\Neu(X_\vx)} + 1 \Bigr)
         \normsqr[X_\vx] {\de u}
  \end{equation*}
  using Cauchy-Schwarz, the Sobolev trace
  estimate~\eqref{eq:sob.tr.gen} for $\Gamma_\vx \subset X_\vx$ and
  the min-max principle
  \begin{equation*}
    \lambda_2^\Neu(X_\vx) \normsqr[X_\vx] {\wt u}
    \le \normsqr[X_\vx] {\de \wt u}
    =   \normsqr[X_\vx] {\de u},
  \end{equation*}
  since $\wt u$ is orthogonal to the first (constant) eigenfunction of
  the Neumann Laplacian on $X_\vx$.  The scaling property $\eps^{m-1}
  \normsqr[X_\vx] {\de u}= \normsqr[X_\vxeps] {\de u}$ now gives the
  result.
\end{proof}

Next, we compare the average of $u$ on $Y_\vxed$ with the average of
$u$ on $X_\vx$.  To do so, we introduce a partial averaging operator
also needed later on for the identification operators.  For
simplicity, we assume that
\begin{equation}
  \label{eq:p.vol}
  \volume_m Y_\edge = 1
\end{equation}
for all $\edge \in \Ed$.  We set
\begin{equation}
  \label{eq:av.ed}
  (\avint_\edge u)(s) := \dashint_{Y_\edge} u(s,y) \dd Y_\edge(y),
\end{equation}
Note that the integral exists for almost every $s$ and $\avint_\edge u$
defines a function in $\Lsqr{I_\edge}$.  If $s=0$ or $s=\ell_\edge$
denotes the vertex $\vx=\bd_-\edge$ or $\vx=\bd_+ \edge$, respectively, we
also write $(\avint_\edge u)(\vx)$.

The proof of the following lemma is similar to the proof of
\Lem{sob.diff2} (see also Lem.~2.8 of~\cite{exner-post:09}):
\begin{lemma}
  \label{lem:sob.diff}
  We have
  \begin{equation*}
    \eps^m \sum_{\edge \in \Ed_\vx}
        \abs{\avint_\vx u - \avint_\edge u(\vx)}^2
    \le
    \eps \ctr{Y_\vx, X_\vx}
           \Bigl( \frac 1 {\lambda_2^\Neu(X_\vx)} + 1 \Bigr)
         \normsqr[X_\vxeps] {\de u}
  \end{equation*}
  for all $u \in \Sob{X_\vx}$.
\end{lemma}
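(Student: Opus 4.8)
The plan is to imitate the proof of \Lem{sob.diff2}, replacing the boundary face $\Gamma_\vx$ by the isometrically embedded disjoint union of transversal manifolds $Y_\vx = \bigdisjcup_{\edge \in \Ed_\vx} Y_\vxed \subset X_\vx$. First I would set $\wt u := u - \avint_\vx u$, so that for each $\edge \in \Ed_\vx$ one has $\avint_\vx u - \avint_\edge u(\vx) = - \avint_\edge \wt u(\vx) = -\dashint_{Y_\vxed} \wt u$. By the Cauchy--Schwarz inequality applied to the normalised measure $\dashint_{Y_\vxed}$, together with the volume normalisation $\volume_m Y_\edge = 1$ from~\eqref{eq:p.vol} (which makes $\dashint_{Y_\vxed}$ a genuine mean over a set of total mass one), this gives $\abs{\avint_\vx u - \avint_\edge u(\vx)}^2 \le \normsqr[Y_\vxed]{\wt u}$. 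Summing over $\edge \in \Ed_\vx$ and using that the $Y_\vxed$ are pairwise disjoint with union $Y_\vx$, I obtain $\sum_{\edge \in \Ed_\vx} \abs{\avint_\vx u - \avint_\edge u(\vx)}^2 \le \normsqr[Y_\vx]{\wt u}$.

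Next I would estimate $\normsqr[Y_\vx]{\wt u}$ exactly as in \Lem{sob.diff2}: the Sobolev trace estimate~\eqref{eq:sob.tr.gen} for the pair $Y_\vx \subset X_\vx$ gives $\normsqr[Y_\vx]{\wt u} \le \ctr{Y_\vx, X_\vx}\bigl(\normsqr[X_\vx]{\de \wt u} + \normsqr[X_\vx]{\wt u}\bigr) = \ctr{Y_\vx, X_\vx}\bigl(\normsqr[X_\vx]{\de u} + \normsqr[X_\vx]{\wt u}\bigr)$ since $\de \wt u = \de u$. Because $\wt u$ is orthogonal in $\Lsqr{X_\vx}$ to the constants, i.e.\ to the first (constant) eigenfunction of the Neumann Laplacian on $X_\vx$, the min-max principle yields $\lambda_2^\Neu(X_\vx)\normsqr[X_\vx]{\wt u} \le \normsqr[X_\vx]{\de \wt u} = \normsqr[X_\vx]{\de u}$, so that $\normsqr[Y_\vx]{\wt u} \le \ctr{Y_\vx, X_\vx}\bigl(\tfrac{1}{\lambda_2^\Neu(X_\vx)} + 1\bigr)\normsqr[X_\vx]{\de u}$.

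Finally I would bring in the scaling relations~\eqref{eq:riem.dens}: multiplying through by $\eps^m$ and using $\dd X_\vxeps = \eps^{m+1}\dd X_\vx$ together with $\abssqr[g_\eps]{\de u} = \eps^{-2}\abssqr[g]{\de u}$ (hence $\normsqr[X_\vxeps]{\de u} = \eps^{m-1}\normsqr[X_\vx]{\de u}$) gives $\eps^m\normsqr[X_\vx]{\de u} = \eps\,\normsqr[X_\vxeps]{\de u}$, just as at the end of the proof of \Lem{sob.diff2}, and the claimed estimate follows by combining the three displays. As usual it is enough to carry this out for smooth $u$ and then extend to arbitrary $u \in \Sob{X_\vx}$ by density, since the restriction map $\Sob{X_\vx} \to \Lsqr{Y_\vx}$, the partial averaging operators and the exterior derivative are all bounded. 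There is no genuine obstacle here: the only points requiring care are the bookkeeping of the powers of $\eps$ — the measure on $X_\vx$ scales with $\eps^{m+1}$ while the one on $Y_\vx$ scales with $\eps^m$, which is precisely what converts the factor $\eps^m$ on the left-hand side into the single factor $\eps$ on the right — and invoking~\eqref{eq:p.vol} so that Cauchy--Schwarz can be applied to $\avint_\edge$ without picking up an extra constant.
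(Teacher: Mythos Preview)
Your proof is correct and follows exactly the approach the paper intends: the paper merely states that the proof is ``similar to the proof of \Lem{sob.diff2}'' with $Y_\vx$ in place of $\Gamma_\vx$, and that is precisely what you do --- Cauchy--Schwarz on each $Y_\vxed$ (using $\volume_m Y_\edge=1$), the trace estimate~\eqref{eq:sob.tr.gen} for $Y_\vx\subset X_\vx$, the min-max principle, and the scaling $\eps^{m-1}\normsqr[X_\vx]{\de u}=\normsqr[X_\vxeps]{\de u}$.
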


We finally need an estimate over the vertex neighbourhoods.  It will
assure that in the limit $\eps \to 0$, no family of normalised
eigenfunctions $(u_\eps)_\eps$ with uniformly bounded eigenvalues can
concentrate on $X_\vxeps$, i.e., $\norm[X_\vxeps] u/\norm[X_\eps] u
\to 0$.  A proof of the following estimate was given e.g.\
in~\cite[Lem.~2.9]{exner-post:09}:
\begin{lemma}
  \label{lem:vx.est}
  We have
  \begin{equation*}
    \normsqr[X_\vxeps] u
    \le \eps^2 C_\vx
          \normsqr[X_\vxeps] {\de u}
       + 8 \eps \cvol \vx \Bigl[ b \normsqr[X_\Edepsvx]{u'}
                        + \frac 2 b \normsqr[X_\Edepsvx] u \Bigr]
  \end{equation*}
  for $0 < b \le \min_\edge \ell_\edge$, where 
  \begin{equation}
    \label{eq:const.vx}
     C_\vx := 4 \Bigl[ \frac 1 {\lambda_2(X_\vx)} 
            + \cvol \vx \ctr {Y_\vx, X_\vx} 
                 \bigl( \frac 1 {\lambda_2^\Neu(X_\vx)} + 1 \bigr)
                \Bigr]
     \qquadtext{and}
   \cvol \vx:= \frac{\volume_{m+1} X_\vx} {\deg \vx}.
  \end{equation}
  Moreover, $u'$ denotes the derivative with respect to the
  longitudinal variable $s \in I_\edge$ on each component $X_\edge=I_\edge
  \times Y_\edge$ of $X_{\Ed_\vx}$.
\end{lemma}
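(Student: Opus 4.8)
The plan is to localise the estimate near the vertex $\vx$ by writing $u$ on the vertex manifold $X_\vx$ as its mean value plus a mean--zero remainder, controlling the remainder by a Poincar\'e (min-max) inequality on $X_\vx$ and the mean value through the one-dimensional behaviour on the adjacent edges. Since all the quantities that occur depend continuously on $u \in \Sob{X_\eps}$ (the averaging operators $\avint_\vx$, $\avint_\edge$ and the relevant restriction maps being bounded), it suffices to carry out the computation for smooth $u$ and then pass to the general case by density.

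First I would split $u = \avint_\vx u + (u - \avint_\vx u)$ on $X_\vx$, where $\avint_\vx u$ as in~\eqref{eq:av.vx} is the constant function equal to the average of $u$ over $X_\vx$. Because this average is unaffected by the homothetic scaling of the measure, $\avint_\vx u$ is exactly the orthogonal projection of $u$ onto the constants in $\Lsqr{X_\vxeps}$, so that $\normsqr[X_\vxeps]{u} = \normsqr[X_\vxeps]{\avint_\vx u} + \normsqr[X_\vxeps]{u - \avint_\vx u}$. For the remainder I would invoke the min-max principle for the Neumann Laplacian on $X_\vx$, namely $\int_{X_\vx}\abssqr{u - \avint_\vx u} \le \tfrac{1}{\lambda_2(X_\vx)}\int_{X_\vx}\abssqr[g]{\de u}$, and rescale: using $\dd X_\vxeps = \eps^{m+1}\dd X_\vx$ together with $\abssqr[g_\eps]{\de u} = \eps^{-2}\abssqr[g]{\de u}$ (both coming from $g_\vxeps = \eps^2 g_\vx$) this becomes $\normsqr[X_\vxeps]{u - \avint_\vx u} \le \tfrac{\eps^2}{\lambda_2(X_\vx)}\normsqr[X_\vxeps]{\de u}$.

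For the mean value, $\avint_\vx u$ being constant and $\volume_{m+1}X_\vx = \cvol\vx\deg\vx$, one has $\normsqr[X_\vxeps]{\avint_\vx u} = \eps^{m+1}\volume_{m+1}(X_\vx)\,\abs{\avint_\vx u}^2 = \eps\cvol\vx\,\eps^m\sum_{\edge \in \Ed_\vx}\abs{\avint_\vx u}^2$. I would then use $\abs{\avint_\vx u}^2 \le 2\abs{\avint_\vx u - \avint_\edge u(\vx)}^2 + 2\abs{\avint_\edge u(\vx)}^2$ for each $\edge \in \Ed_\vx$ and sum over $\edge$. The first sum is bounded by \Lem{sob.diff}, which gives $\eps^m\sum_{\edge}\abs{\avint_\vx u - \avint_\edge u(\vx)}^2 \le \eps\ctr{Y_\vx,X_\vx}\bigl(\tfrac{1}{\lambda_2^\Neu(X_\vx)}+1\bigr)\normsqr[X_\vxeps]{\de u}$. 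For the second sum I would apply the elementary one-dimensional Sobolev estimate $\abs{v(0)}^2 \le \tfrac{2}{b}\normsqr[I_\edge]{v} + 2b\normsqr[I_\edge]{v'}$ (and its analogue at $s=\ell_\edge$), valid for $0 < b \le \ell_\edge$, to $v = \avint_\edge u \in \Sob{I_\edge}$; combined with $\normsqr[I_\edge]{\avint_\edge u} \le \int_{I_\edge \times Y_\edge}\abssqr{u}\,\dd Y_\edge\,\dd s_\edge$ and $\normsqr[I_\edge]{(\avint_\edge u)'} \le \int_{I_\edge \times Y_\edge}\abssqr{u'}\,\dd Y_\edge\,\dd s_\edge$ --- both consequences of Cauchy--Schwarz and the normalisation $\volume_m Y_\edge = 1$ from~\eqref{eq:p.vol} --- and with $\dd X_\edeps = \eps^m\dd s_\edge\,\dd Y_\edge$ and the fact that the longitudinal derivative carries no $\eps$-factor in $g_\edeps = \dd s_\edge^2 + \eps^2 h_\edge$, this produces $\eps^m\normsqr[I_\edge]{\avint_\edge u} \le \normsqr[X_\edeps]{u}$ and $\eps^m\normsqr[I_\edge]{(\avint_\edge u)'} \le \normsqr[X_\edeps]{u'}$. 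Summing over $\edge \in \Ed_\vx$ turns these into $\normsqr[X_\Edepsvx]{u}$ and $\normsqr[X_\Edepsvx]{u'}$.

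Adding the remainder and mean-value contributions and bounding the resulting numerical constants generously by $C_\vx$ and $8\cvol\vx$ respectively --- the definitions in~\eqref{eq:const.vx} leave ample room --- would yield the claimed inequality. I do not expect a genuine obstacle here; the only point requiring care is the bookkeeping of the various powers of $\eps$, which differ because $g_\eps$ scales isotropically (by $\eps^2$) on the vertex manifolds but only in the transversal directions on the edge manifolds, together with the reduction of the $(m+1)$-dimensional integrals over the edge neighbourhoods to genuinely one-dimensional ones via the partial average $\avint_\edge$. This is precisely the estimate established in \cite[Lem.~2.9]{exner-post:09}.
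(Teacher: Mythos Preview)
Your proposal is correct and follows precisely the strategy behind the constants in~\eqref{eq:const.vx}; the paper itself does not give a proof here but only refers to \cite[Lem.~2.9]{exner-post:09}, whose argument is exactly the mean/remainder splitting you describe, combined with \Lem{sob.diff} and the one-dimensional Sobolev trace estimate underlying~\eqref{eq:eval.vx}. Your bookkeeping of the $\eps$-powers and the use of $\volume_m Y_\edge = 1$ to pass from $\avint_\edge u$ to $u$ on $X_\edeps$ are correct, and the constants you obtain indeed fit inside $C_\vx$ and $8\cvol\vx$ with room to spare.
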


\subsection{Equi-ellipticity}
\label{sec:equi-ell}

Let us now show that the family of sesquilinear forms $(a_\eps)_\eps$
is equi-elliptic.  To do so, we need assumptions on $\beta_\eps$ and
$\gamma_\eps$.  We assume that
\begin{subequations}
  \begin{gather}
    \label{eq:ass.gamma}
    \gamma_\eps 
    \in
    \bigl(\Lsqr{\Gamma_\Vxeps} \otimes \Lsqr{\Gamma_\Vxeps}\bigr) \oplus
    \bigl(\Lsqr{\Gamma_\Edeps} \otimes \Lsqr{\Gamma_\Edeps}\bigr)
    \subset \Lsqr{\Gamma_\eps} \otimes \Lsqr{\Gamma_\eps},\\
    \label{eq:ass.beta.gamma.eps}
    \norm[\infty] {\beta_\Vxeps} + \norm[\Gamma_\Vxeps \times
    \Gamma_\Vxeps] {\gamma_\eps} \le C_{\beta,\gamma,\Vx}, \quad
    \norm[\infty] {\beta_\Edeps} + \norm[\Gamma_\Edeps
    \times \Gamma_\Edeps] {\gamma_\eps} \le \eps C_{\beta,\gamma,\Ed}
  \end{gather}
\end{subequations}
for all $\eps>0$ small enough, where $\beta_{\Vxeps}$ is the
restriction of $\beta_\eps$ to $\Gamma_\Vx$ etc.  Note that we assumed for
simplicity that $\gamma_\eps$ only couples edge neighbourhoods with
edge neighbourhoods and vertex neighbourhoods with vertex
neighbourhoods.

\begin{proposition}
  \label{prp:ell.eps}
  Assume that~\eqref{eq:ass.gamma}--\eqref{eq:ass.beta.gamma.eps}
  are fulfilled. Then, $a_\eps(u,u)$ is well-defined for $u \in
  \HSone_\eps = \Sob{X_\eps}$.  Moreover, given $\alpha \in (0,1)$,
  there exists $\omega \ge 0$ and $\eps_0=\eps_0(\alpha)>0$ such that
  \begin{equation*}
    \Re a_\eps(u,u) + \omega \normsqr[\HS_\eps] u
    \ge \alpha \normsqr[\HSone_\eps] u
  \end{equation*}
  for all $u \in \HSone_\eps$ and all $\eps \in (0,\eps_0]$.  In
  particular, $(a_\eps)_{\eps \in (0,\eps_0]}$ is an $(\HS_\eps)_{\eps
    \in (0,\eps_0]}$-equi-elliptic family and $\HS_\eps=\Lsqr
  {X_\eps}$.
\end{proposition}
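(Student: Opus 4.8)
The plan is to reduce the whole statement to the Sobolev trace estimates of \Lems{sob.tr.ed}{vx.est}, read through the scaling relations~\eqref{eq:riem.dens}. Throughout I decompose $X=\bigdisjcup_{\vx\in\Vx}X_\vx\cup\bigdisjcup_{\edge\in\Ed}X_\edge$ and $\Gamma=\bigdisjcup_{\vx\in\Vx}\Gamma_\vx\cup\bigdisjcup_{\edge\in\Ed}\Gamma_\edge$, the mutual overlaps being of lower dimension and hence negligible, so that all integrals split into vertex and edge contributions. First I would settle well-definedness: for $u\in\Sob{X_\eps}$ the trace $u\restr\Gamma$ lies in $\Lsqr{\Gamma_\eps}$ --- on the edge neighbourhoods by \Lem{sob.tr.ed} (bounding $\normsqr[X_\edeps]{\de_{Y_\edge}u}\le\normsqr[X_\edeps]{\de u}$), on the vertex neighbourhoods by the scaled trace estimate~\eqref{eq:sob.tr.gen.eps} applied to $(\Gamma_\vx,X_\vx)$ --- so that
\[
  \bigabs{\textstyle\int_\Gamma\beta_\eps u\conj v\,\dd\Gamma_\eps}
     \le\norm[\infty]{\beta_\eps}\,\norm[\Gamma_\eps]u\,\norm[\Gamma_\eps]v<\infty,
\]
and, using $\gamma_\eps\in\Lsqr{\Gamma_\eps\times\Gamma_\eps}$, $\norm[\Gamma_\eps\times\Gamma_\eps]{u\otimes\conj v}=\norm[\Gamma_\eps]u\,\norm[\Gamma_\eps]v$ and Cauchy--Schwarz, the non-local term is finite as well. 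The same estimates, after bounding the boundary $\Lsqrsymb$-norms by $\Sob{X_\eps}$-norms (which on the vertex part already uses \Lem{vx.est}, see below), give $\abs{a_\eps(u,v)}\le M\norm[\HSone_\eps]u\,\norm[\HSone_\eps]v$ with $M$ independent of $\eps$, i.e.\ the uniform boundedness required by \Def{equi_sec}.

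For the coercivity estimate I would write $\Re a_\eps(u,u)=\normsqr[X_\eps]{\de u}+\Re\bigl(b_\eps(u)+c_\eps(u)\bigr)$, with $b_\eps(u)=\int_\Gamma\beta_\eps\abs u^2\,\dd\Gamma_\eps$ and $c_\eps(u)=\int_{\Gamma\times\Gamma}\gamma_\eps(u\otimes\conj u)$, and estimate $\bigabs{b_\eps(u)+c_\eps(u)}$. By the splitting of $\gamma_\eps$ in~\eqref{eq:ass.gamma} and the bounds~\eqref{eq:ass.beta.gamma.eps},
\[
  \bigabs{b_\eps(u)+c_\eps(u)}
    \le C_{\beta,\gamma,\Vx}\normsqr[\Gamma_\Vxeps]u
      +\eps\,C_{\beta,\gamma,\Ed}\normsqr[\Gamma_\Edeps]u .
\]
The edge part is the benign one: \Lem{sob.tr.ed} gives
\[
  \eps\,C_{\beta,\gamma,\Ed}\normsqr[\Gamma_\Edeps]u
    \le C'\bigl(\eps^2\normsqr[X_\eps]{\de u}+\normsqr[X_\eps]u\bigr)
\]
with $C'$ depending only on the cross-sections and $C_{\beta,\gamma,\Ed}$; here the $\Err(\eps)$-scaling of $\beta_\Edeps$ in~\eqref{eq:ass.beta.gamma.eps} (cf.\ \Rem{scaling}) is precisely what turns the dangerous $\eps^{-1}$ produced by the trace estimate into a harmless constant.

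The vertex part is where the real difficulty lies, and I expect it to be the only non-routine step. The scaled trace estimate~\eqref{eq:sob.tr.gen.eps} for $(\Gamma_\vx,X_\vx)$ produces, beside $\eps\normsqr[X_\vxeps]{\de u}$, a term $\eps^{-1}\normsqr[X_\vxeps]u$ which --- compared against the $\normsqr[X_\eps]u$ it must eventually be absorbed into --- does \emph{not} tend to $0$ as $\eps\to0$, so a uniform vertex $\omega$ is not available from the trace estimate alone. This is exactly the r\^ole of \Lem{vx.est}: it bounds
\[
  \tfrac1\eps\normsqr[X_\vxeps]u
    \le\eps\,C_\vx\normsqr[X_\vxeps]{\de u}
      +8\cvol\vx\Bigl(b\normsqr[X_\Edepsvx]{u'}+\tfrac2b\normsqr[X_\Edepsvx]u\Bigr)
\]
for every $0<b\le\min_\edge\ell_\edge$. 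Summing over $\vx\in\Vx$ (each edge is adjacent to at most two vertices) and using $\normsqr[X_\vxeps]{\de u},\normsqr[X_\Edepsvx]{u'}\le\normsqr[X_\eps]{\de u}$ and $\normsqr[X_\Edepsvx]u\le\normsqr[X_\eps]u$, I would obtain
\[
  C_{\beta,\gamma,\Vx}\normsqr[\Gamma_\Vxeps]u
    \le C''\bigl((\eps+b)\normsqr[X_\eps]{\de u}+\tfrac1b\normsqr[X_\eps]u\bigr)
\]
with $C''$ independent of $\eps$ and $b$.

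Assembling the two contributions gives $\Re a_\eps(u,u)\ge\bigl(1-C_1(\eps^2+\eps+b)\bigr)\normsqr[X_\eps]{\de u}-C_2\bigl(1+\tfrac1b\bigr)\normsqr[X_\eps]u$ with $C_1,C_2$ independent of $\eps$ and $b$. Given $\alpha\in(0,1)$, I would first fix $b$ so small that $C_1 b\le\tfrac{1-\alpha}{2}$, then $\eps_0>0$ so small that $C_1(\eps_0^2+\eps_0)\le\tfrac{1-\alpha}{2}$; with $\omega:=C_2(1+\tfrac1b)+\alpha$ this yields $\Re a_\eps(u,u)+\omega\normsqr[\HS_\eps]u\ge\alpha\bigl(\normsqr[X_\eps]{\de u}+\normsqr[X_\eps]u\bigr)=\alpha\normsqr[\HSone_\eps]u$ for all $\eps\in(0,\eps_0]$. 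Together with $c_V=1$ from~\eqref{eq:form_embedding} and the uniform bound $M$, this is exactly equi-ellipticity in the sense of \Def{equi_sec}, while $\HS_\eps=\Lsqr{X_\eps}$ holds by construction. The only point that I expect to demand genuine care is the vertex estimate via \Lem{vx.est}; the edge estimate and the bookkeeping with~\eqref{eq:riem.dens} are routine.
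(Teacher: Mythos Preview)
Your proposal is correct and follows essentially the same route as the paper's own proof: both estimate $\bigl|a_\eps(u,u)-\normsqr[X_\eps]{\de u}\bigr|$ by splitting into vertex and edge boundary contributions via~\eqref{eq:ass.gamma}--\eqref{eq:ass.beta.gamma.eps}, handle the edge part with \Lem{sob.tr.ed}, and absorb the dangerous $\eps^{-1}\normsqr[X_\vxeps]u$ from the vertex trace estimate by invoking \Lem{vx.est}, before choosing first $b$ and then $\eps_0$ small. Your identification of the vertex estimate as the only non-routine step is accurate; the paper treats it in exactly the same way.
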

\begin{proof}
Let us show that~\eqref{eq:form_coercive} holds with uniform
  constants $\omega$ and $\alpha$.  Estimate~\eqref{eq:form_bdd} can
  be seen similarly; and~\eqref{eq:form_embedding} is fulfilled with
  $c_V=1$.

  We start estimating the difference $a_\eps(u,u)-\normsqr[X_\eps]{\de
    u}$.  We have
  \begin{equation*}
    \bigl| a_\eps(u,u) - \normsqr[X_\eps]{\de u} \bigr|
    \le 
      C_{\beta,\gamma,\Vx} \normsqr[\Gamma_\Vxeps] u
      + \eps  C_{\beta,\gamma,\Ed} \normsqr[\Gamma_\Edeps] u
  \end{equation*}
  by Cauchy-Schwarz, Fubini,
  \eqref{eq:ass.gamma}--\eqref{eq:ass.beta.gamma.eps}.  It follows
  from the Sobolev trace estimate~\eqref{eq:sob.tr.gen.eps} and
  \Lem{vx.est} that
  \begin{align*}
    \normsqr[\Gamma_\Vxeps] u
    &\le \max_\vx \ctr {\Gamma_\vx, X_\vx}
              \Bigl(\eps \normsqr[X_\Vxeps] {\de u}
                   + \frac 1 \eps \normsqr[X_\Vxeps] u
              \Bigr)\\
    &\le \max_\vx (\ctr {\Gamma_\vx, X_\vx} + C_\vx) 
                 \eps \normsqr[X_\Vxeps] {\de u}
       + 16 \max_\vx \ctr {\Gamma_\vx, X_\vx}
                  \cvol \vx \Bigl[ b \normsqr[X_\Edeps]{u'}
                        + \frac 2 b \normsqr[X_\Edeps] u \Bigr]
  \end{align*}
  for $0 < b \le \min_\edge \ell_\edge$.  For $\normsqr[\Gamma_\Edeps] u$,
  we have the estimate
  \begin{equation*}
    \normsqr[\Gamma_\Edeps] u
    \le \max_\edge \ctr {\bd Y_\edge, Y_\edge}
              \Bigl(\eps \normsqr[X_\Edeps] {\de u}
                   + \frac 1 \eps \normsqr[X_\Edeps] u
              \Bigr)
  \end{equation*}
  by \Lem{sob.tr.ed}.  It follows that
  \begin{equation*}
    \bigl| a_\eps(u,u) - \normsqr[X_\eps]{\de u} \bigr|
    \le C(\eps,b) \normsqr[X_\eps] {\de u}
      + \omega(a) \normsqr[X_\eps] u,
  \end{equation*}
  where
  \begin{align*}
    C(\eps,b) &:= \max_{\vx,\edge} \bigl\{\eps C_{\beta,\gamma,\Vx}
                       (\ctr {\Gamma_\vx, X_\vx} + C_\vx), \;
                      16 b C_{\beta,\gamma,\Vx} 
                               \ctr {\Gamma_\vx, X_\vx} \cvol \vx, \;
                      \eps^2 C_{\beta,\gamma,\Ed}
                                 \ctr {\Gamma_\edge, X_\edge}\bigr\},\\
    \omega(b)  &:= \max_{\vx,\edge} \bigl\{16 b^{-1} 
                       C_{\beta,\gamma,\Vx} 
                             \ctr {\Gamma_\vx, X_\vx} \cvol \vx, \;
                      C_{\beta,\gamma,\Ed} \ctr {\Gamma_\edge, X_\edge}
               \bigr\}.
  \end{align*}
  For $\alpha \in (0,1)$ we choose
  \begin{equation*}
    \eps_0 := \min_{\vx,\edge} \Bigl\{1, \;
                 \frac{1-\alpha}
                      {C_{\beta,\gamma,\vx}
                           (\ctr {\Gamma_\vx, X_\vx} + C_\vx)},\;
                 \frac{1-\alpha}
                      {C_{\beta,\gamma,\edge} \ctr {\Gamma_\edge, X_\edge}}
              \Bigr\}
  \end{equation*}
  and
  \begin{equation*}
    b := \min_{\vx, \edge} \Bigl\{\ell_\edge, \;
                     \frac{1-\alpha}
                     {16 C_{\beta,\gamma,\Vx} 
                               \ctr {\Gamma_\vx, X_\vx} \cvol \vx}
              \Bigr\}.
  \end{equation*}
  Then $C(\eps,b) \le C(\eps_0,b) \le 1-\alpha$ and we have
  \begin{align*}
    \Re a_\eps(u)
    &\ge \normsqr[X_\eps] {\de u}
     - \bigl| a_\eps(u) - \normsqr[X_\eps]{\de u} \bigr|\\
    &\ge \bigl(1-C(\eps_0,b)\bigr) \normsqr[X_\eps] {\de u}
     - \omega(b) \normsqr[X_\eps] u
    \ge \alpha \normsqr[X_\eps] {\de u}
     - \omega \normsqr[X_\eps] u
  \end{align*}
  for all $\eps \in (0,\eps_0]$ with $\omega := \omega(b)$.  In
  particular, the family $(a_\eps)_\eps$ is equi-elliptic.
\end{proof}

\subsection{The identification operators}
\label{sec:id.ops}

We now fix the identification operators $\Jup$ and $\Jdown$ similar as
in~\cite{post:06} (see also \Rem{scaling}).  In particular, we set
\begin{equation}
  \label{eq:j.up}
  \map \Jup {\Lsqr X}{\Lsqr {X_\eps}}, \qquad
  (\Jup f)_\vx := 0, \quad (\Jup f)_\edge := f_\edge \otimes \1_\edeps,
\end{equation}
where we use the decomposition of $u=\Jup f$ with respect
to~\eqref{eq:mfd.dec}. Here $\1_\edeps(y):=\eps^{-m/2}$ for all $y \in
Y_\edge$, thus
\begin{equation*}
  (\Jup f)_\edge(s,y)=\eps^{-m/2} f_\edge(s).
\end{equation*}
Note that $\norm[H_\eps] {\Jup f} \le \norm[H_0] f$.
For $\Jdown$, we just choose the adjoint, i.e., $\Jdown := (\Jup)^*$.
In particular,~\eqref{A2} is fulfilled and we have
\begin{equation*}
  (\Jdown u)_\edge = \eps^{m/2} \avint_\edge u.
\end{equation*}
Moreover, we need the corresponding identification operators on the
level of quadratic form domains. As in~\cite{post:06}, we define
\begin{equation*}
  (\Jup_1 f)_\edge := (\Jup f)_\edge, \qquad
  (\Jup_1 f)_\vx := \eps^{-m/2}f(\vx)
\end{equation*}
(see~\eqref{eq:av.ed} for the notation).  Note that $f(\vx)$ is well
defined for $f \in \HSone_0$, and that $\Jup_1 f \in \HSone_\eps$. For
the operator in the opposite direction, we choose
\begin{align*}
  (\Jdown_1 u)_\edge (s)
  :=& (\Jdown u)_\edge(s)
      + \eps^{m/2} \sum_{\vx \in \bd \edge}
           \chi_{\vxed}(s)\bigl(\avint_\vx u - \avint_\edge u(\vx)\bigr)\\
  =& \eps^{m/2}\Bigl( \avint_\edge u(s) + \sum_{\vx \in \bd \edge}
           \chi_\vxed(s)\bigl(\avint_\vx u - \avint_\edge u(\vx)\bigr)\Bigr)
\end{align*}
where $\chi_\vxed$ is the continuous function on the metric edge
$I_\edge$ with $\chi_\vxed(\vx)=1$, $\chi_\vxed$ being affine linear on
$I_\vxed:=\set{s \in I_\edge}{d(s,v) \le \ell_0}$ and $\chi_\vxed(s)=0$
for $s \in I_\vxed$. Recall the definition of $\avint_\vx u$
in~\eqref{eq:av.vx}.  In particular, it is easy to see that $(\Jdown_1
u)_\edge(\vx)=\eps^{m/2}\avint_\vx u$, independently of the edge $\edge
\in \Ed_\vx$, i.e., $\Jdown_1 u \in \mc V$.

Let $\gamma$ be the matrix of \Sec{graph}.  We additionally need that
\begin{equation}
  \label{eq:ass.beta.gamma.eps'}
  \norm[\Lin{\lsqr \Vx}] {\wt \gamma_\eps - \gamma}
  \le \eps^{1/2} C'_{\beta,\gamma,\Vx}
         \quadtext{and}
  \norm[\infty] {\beta_\Edeps} + \norm[\Gamma_\Edeps \times
                    \Gamma_\Edeps] {\gamma_\eps}
  \le \eps^{3/2} C'_{\beta,\gamma,\Ed},
\end{equation}
where $\tilde{\gamma}_\eps$ is the $|\Vx| \times | \Vx|$-matrix
defined by
\begin{equation*}
  \wt \gamma_\vxwxeps
  := \frac 1 {\deg \vx}
  \Bigl(
  \delta_\vxwx \int_{\Gamma_\vx} \beta_\eps \dd \Gamma_\vx 
  + \eps^m \int_{\Gamma_\vx \times \Gamma_\wx} 
  \gamma_\eps \dd \Gamma_\vx \otimes \dd \Gamma_\wx
  \Bigr).
\end{equation*}
Moreover, $(\wt \gamma_\eps \phi)(\vx) := \sum_{\wx \in \Vx} \wt
\gamma_\vxwxeps \phi(\wx)$ denotes the corresponding operator in the
Hilbert space $\lsqr \Vx$ with weighted norm $\normsqr[\Vx] \phi :=
\sum_\vx \abssqr{\phi(\vx)} \deg \vx$.  Note that the second
condition in~\eqref{eq:ass.beta.gamma.eps'} is stronger than the
second condition in~\eqref{eq:ass.beta.gamma.eps}.

\begin{proposition}
  \label{prp:a5}
  Assume that~\eqref{eq:ass.gamma}--\eqref{eq:ass.beta.gamma.eps}
  and~\eqref{eq:ass.beta.gamma.eps'} hold, then we have
  \begin{equation*}
    \bigabs{a_\eps \bigl(\Jup_1 f, u)
               - a_0 (f, \Jdown_1 u\bigr)}
    \le \delta_\eps \norm[\HSone_0] f \norm[\HSone_\eps] u
  \end{equation*}
  for all $f \in \HSone_0=\Sob {X_0}$, $u \in \HSone_\eps =
  \Sob{X_\eps}$ and $\eps \in (0,\eps_0]$, where
  $\delta_\eps=\Err(\eps^{1/2})$ depends only on the geometry of the
  unscaled manifold $X$ and the metric graph.
\end{proposition}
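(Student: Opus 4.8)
The plan is to split $a_\eps(\Jup_1 f, u) - a_0(f, \Jdown_1 u)$ into a ``principal'' part, which is handled by quoting~\cite{post:06}, and a ``boundary'' part, which is the new contribution. Write $a_\eps^\Neu(u,v) \coloneqq \int_X \iprod[g_\eps]{\de u}{\de v}\dd X_\eps$ for the principal term of the manifold form~\eqref{eq:qf.mfd} and $\qf{b}_\eps(u,v) \coloneqq \int_\Gamma \beta_\eps u\conj v\dd\Gamma_\eps + \int_{\Gamma\times\Gamma}\gamma_\eps(u\otimes\conj v)\dd\Gamma_\eps\otimes\dd\Gamma_\eps$ for its boundary term, and recall $a_0 = a_\Vxnull + a_\Ednull$ from~\eqref{eq:qf.graph}. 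Then
\[
  a_\eps(\Jup_1 f, u) - a_0(f, \Jdown_1 u)
  = \bigl( a_\eps^\Neu(\Jup_1 f, u) - a_\Ednull(f, \Jdown_1 u) \bigr)
  + \bigl( \qf{b}_\eps(\Jup_1 f, u) - a_\Vxnull(f, \Jdown_1 u) \bigr).
\]
For the first bracket I would simply invoke~\cite{post:06}: the operators $\Jup,\Jdown,\Jup_1,\Jdown_1$ are exactly the identification operators used there for the Neumann Laplacian on the shrinking tubes, whose form on the graph side is precisely $a_\Ednull$ (Kirchhoff), and the bound $\bigabs{a_\eps^\Neu(\Jup_1 f, u) - a_\Ednull(f, \Jdown_1 u)} \le C\eps^{1/2}\norm[\HSone_0]f\norm[\HSone_\eps]u$, with $C$ depending only on the unscaled geometry, is contained in that analysis. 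So it remains to estimate the boundary bracket, which is where the genuine work lies.

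Using~\eqref{eq:ass.gamma} I would decompose $\qf{b}_\eps = \qf{b}_\eps^\Vx + \qf{b}_\eps^\Ed$ along $\Gamma = \Gamma_\Vx \dcup \Gamma_\Ed$ (there is no cross coupling between vertex and edge boundaries). The edge part is the easy one: on $\Gamma_\edge = I_\edge \times \bd Y_\edge$ one has $(\Jup_1 f)(s,y) = \eps^{-m/2}f_\edge(s)$, so the density scalings~\eqref{eq:riem.dens} give $\normsqr[\Gamma_\Edeps]{\Jup_1 f} = \eps^{-1}\sum_\edge \volume_{m-1}(\bd Y_\edge)\normsqr[I_\edge]{f_\edge} \le C\eps^{-1}\normsqr[\HSone_0]f$, while \Lem{sob.tr.ed} gives $\normsqr[\Gamma_\Edeps]u \le C\eps^{-1}\normsqr[\HSone_\eps]u$ for $\eps\le 1$. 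Combined with the bounds $\norm[\infty]{\beta_\Edeps}, \norm[\Gamma_\Edeps\times\Gamma_\Edeps]{\gamma_\eps} \le \eps^{3/2}C'_{\beta,\gamma,\Ed}$ from~\eqref{eq:ass.beta.gamma.eps'} and Cauchy--Schwarz (using $\norm{u\otimes\conj v} = \norm u\norm v$ on product spaces), this yields $\bigabs{\qf{b}_\eps^\Ed(\Jup_1 f, u)} \le C\eps^{1/2}\norm[\HSone_0]f\norm[\HSone_\eps]u$; the surviving power of $\eps$ is $\tfrac32 - \tfrac12 - \tfrac12 = \tfrac12>0$, which is exactly why~\eqref{eq:ass.beta.gamma.eps'} must strengthen the edge bound of~\eqref{eq:ass.beta.gamma.eps}.

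The core of the argument is the vertex part. On each vertex neighbourhood $\Jup_1 f$ equals the constant $\eps^{-m/2}f(\vx)$, hence is constant on $\Gamma_\vx$, so replacing $u\restr{\Gamma_\vx}$ by its solid average $\avint_\vx u$ decomposes $\qf{b}_\eps^\Vx(\Jup_1 f, u)$ into a leading term plus an averaging error. Using~\eqref{eq:riem.dens}, the factors $\eps^{\mp m/2}$ in $\Jup_1$ and $\Jdown_1$ and the factor $\eps^m$ in the definition of $\wt\gamma_\eps$, the leading term is exactly $\iprod[\Vx]{\wt\gamma_\eps\ul f}{\ul{\Jdown_1 u}}$, where $\ul f = (f(\vx))_\vx$ and $\ul{\Jdown_1 u} = ((\Jdown_1 u)(\vx))_\vx = (\eps^{m/2}\avint_\vx u)_\vx$; verifying this identity is the main bookkeeping step and is precisely the raison d'\^etre of the definition of $\wt\gamma_\eps$. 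The averaging error is $\sum_\vx \eps^{m/2}f(\vx)\int_{\Gamma_\vx}\beta_\eps(\conj u - \conj{\avint_\vx u})\dd\Gamma_\vx$ from the local part plus an analogous double-integral expression from the non-local part; by Cauchy--Schwarz, the uniform bounds~\eqref{eq:ass.beta.gamma.eps} on $\beta_{\Vxeps}$ and on $\gamma_\eps$ over $\Gamma_\Vxeps\times\Gamma_\Vxeps$, \Lem{sob.diff2} in the form $\eps^m\normsqr[\Gamma_\vx]{u-\avint_\vx u} \le \eps\,C\normsqr[X_\vxeps]{\de u}$, and~\eqref{eq:eval.vx} for $\abs{f(\vx)}$, each error term is $\le C\eps^{1/2}\norm[\HSone_0]f\norm[\HSone_\eps]u$. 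Finally, since $a_\Vxnull(f, \Jdown_1 u) = \iprod[\Vx]{\gamma\ul f}{\ul{\Jdown_1 u}}$,
\[
  \qf{b}_\eps^\Vx(\Jup_1 f, u) - a_\Vxnull(f, \Jdown_1 u)
  = \bigiprod[\Vx]{(\wt\gamma_\eps - \gamma)\ul f}{\ul{\Jdown_1 u}} + (\text{averaging error}),
\]
and the first term is $\le \norm[\Lin{\lsqr\Vx}]{\wt\gamma_\eps - \gamma}\,\norm[\Vx]{\ul f}\,\norm[\Vx]{\ul{\Jdown_1 u}}$, which by~\eqref{eq:ass.beta.gamma.eps'}, by~\eqref{eq:eval.vx} (giving $\norm[\Vx]{\ul f} \le C\norm[\HSone_0]f$) and by \Lem{vx.est} (giving $\norm[\Vx]{\ul{\Jdown_1 u}} = \eps^{m/2}\norm[\Vx]{(\avint_\vx u)_\vx} \le C\norm[\HSone_\eps]u$) is $\le \eps^{1/2}\,C\,\norm[\HSone_0]f\,\norm[\HSone_\eps]u$. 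Collecting everything yields the claim with $\delta_\eps$ of the form $C\eps^{1/2}$, the constant $C$ depending only on $C_{\beta,\gamma,\Vx}$, $C_{\beta,\gamma,\Ed}$, $C'_{\beta,\gamma,\Vx}$, $C'_{\beta,\gamma,\Ed}$ and the unscaled manifold and metric graph.

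The step I expect to be the main obstacle is the vertex part: one must check that, after averaging, the leading-order term is \emph{exactly} $\iprod[\Vx]{\wt\gamma_\eps\ul f}{\ul{\Jdown_1 u}}$, so that the entire discrepancy from $a_\Vxnull(f, \Jdown_1 u)$ is captured by the manifestly small quantity $\wt\gamma_\eps - \gamma$ together with genuinely $\Err(\eps^{1/2})$ averaging errors, and then that the edge boundary term, the averaging error and the $\wt\gamma_\eps - \gamma$ term are each of order $\eps^{1/2}$ rather than merely $\Err(1)$. This last point is exactly why the hypotheses prescribe a coupling of size $\Err(1)$ near the vertices but $\Err(\eps^{3/2})$ along the edges, together with $\Err(\eps^{1/2})$-closeness of the induced vertex matrix $\wt\gamma_\eps$ to $\gamma$; compare \Rem{scaling} and the Neumann case in~\cite{post:06}.
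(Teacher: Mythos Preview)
Your proposal is correct and follows essentially the same route as the paper. The only organisational difference is that you split $a_\eps - a_0$ into a Neumann (principal) piece and a boundary piece, delegating the former to~\cite{post:06}, whereas the paper splits by vertex/edge regions and redoes the Neumann estimate explicitly (the $\chi_\vxed'$ term in the edge contribution, handled via \Lem{sob.diff}); the boundary analysis --- edge boundary via \Lem{sob.tr.ed} together with the $\eps^{3/2}$ bound, and vertex boundary via the replacement $u \leadsto \avint_\vx u$, \Lem{sob.diff2} for the averaging error, and the $\wt\gamma_\eps - \gamma$ hypothesis plus \Lem{vx.est} for the leading term --- is identical in both.
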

\begin{proof}
  In order to verify the estimate, we will split the
  estimate in its vertex and edge part.  For the edge contribution, we
  have
  \begin{multline*}
    \bigabs{a_\Edeps \bigl(\Jup_1 f, u)
               - a_\Ednull (f, \Jdown_1 u\bigr)}\\
    =  \eps^{m/2}\Bigl| 
       \eps^{-1} \Bigl(\int_{\Gamma_\Ed} \beta_\eps f \conj u \dd \Gamma
                + \eps^{m-1} 
                  \int_{\Gamma_\Ed \times \Gamma_\Ed}
                    \gamma_\eps (f \otimes \conj u) 
                  \dd \Gamma \otimes \dd \Gamma
                 \Bigr)\\
                + \sum_{\edge \in \Ed} \sum_{\vx \in \bd \edge} \int_{I_\edge}
                \chi_\vxed' f'_\edge
                       (\avint_\vx \conj u - 
                                \avint_\edge \conj u(\vx)) \dd s 
    \Bigr|.
  \end{multline*}
  The first two integrals can be estimated by
  \begin{multline*}
    \eps C'_{\beta,\gamma,\Ed} (\cvol \Ed)^{1/2}
           \norm[X_0] f \norm[\Gamma_\Edeps] u\\
    \le \eps^{1/2} C'_{\beta,\gamma,\Ed}  (\cvol \Ed)^{1/2}
    \norm[X_0] f \bigl(\max_\edge \ctr {\bd Y_\edge ,Y_\edge} 
                      (\eps^2 \normsqr[X_\Edeps] {\de u}
                              + \normsqr[X_\Edeps] u) \bigr)^{1/2}\\
    \le \eps^{1/2} C'_{\beta,\gamma,\Ed} 
                (\cvol \Ed \max_\edge \ctr {\bd Y_\edge ,Y_\edge})^{1/2}
           \norm[X_0] f \norm[\Sob{X_\eps}] u
  \end{multline*}
  using the assumption in~\eqref{eq:ass.beta.gamma.eps'},
  Cauchy-Schwarz, \Lem{sob.tr.ed} and $\eps \le 1$.  Here, we have set
  $\cvol \Ed := \max_\edge (\volume_{m-1} \bd Y_\edge)$.  The last
  term of the edge contribution is small since
  \begin{multline*}
   \eps^{m/2}\Bigl| 
                \sum_{\edge \in \Ed} \sum_{\vx \in \bd \edge} \int_{I_\edge}
                \chi_\vxed' f'_\edge
                       (\avint_\vx \conj u - 
                                \avint_\edge \conj u(\vx)) \dd s 
             \Bigr|\\
    \le  2\eps^{m/2} \ell_0^{-1/2} \norm[X_0] {f'}
      \Bigl(\sum_{\vx \in \Vx} \sum_{\edge \in \Ed_\vx} 
                 \bigl|\avint_\vx u - \avint_\edge
                 u(\vx) \bigr|^2 \Bigr)^{1/2}\\
    \le 2 \eps^{1/2}\max_\vx \biggl(\frac {\ctr {Y_\vx, X_\vx}} {\ell_0} 
                             \biggr)^{1/2} 
         \Bigl(\frac 1{\lambda_2^\Neu(X_\vx)} + 1\Bigr)^{1/2}
            \norm[X_0] {f'} \norm[X_\Vxeps]{\de u}
  \end{multline*}
  using Cauchy-Schwarz again, the fact that $\normsqr[I_\edge]
  {\chi'_\vxed} = 1/\ell_0$, where $\ell_0 = \min_\edge \{\ell_\edge,1\}$,
  and \Lem{sob.diff}.

  For the vertex contribution, we have
  \begin{multline}
    \label{eq:vx.contr}
   \bigabs{a_\Vxeps \bigl(\Jup_1 f, u)
               - a_\Vxnull (f, \Jdown_1 u\bigr)}
    =  \eps^{m/2}\Bigl|
       \sum_{\vx \in \Vx}  f(\vx) \int_{\Gamma_\vx} \beta_\eps  \conj u
                                \dd \Gamma_\vx \\
         + \sum_{\vx, \wx \in \Vx} f(\wx)
              \Bigl(  \eps^m \int_{\Gamma_\vx \times \Gamma_\wx} 
                         \gamma_\eps (\1 \otimes \conj u) 
                  \dd \Gamma_\vx \otimes \dd \Gamma_\wx
              - \gamma_{\vxwx}  (\deg \vx) \avint_\vx \conj u \Bigr)
    \Bigr|\\
   \le \eps^{m/2}
      \Bigabs{ \sum_{\vx,\wx \in \Vx}  
                 f(\wx) (\wt \gamma_\vxwxeps - \gamma_\vxwx)
        \avint_\vx \conj u) \deg \vx} \hspace*{20ex}\\
      + \eps^{m/2}
         \sum_{\vx \in \Vx}
           \Bigr(\abs{f(\vx)}\norm[\Gamma_\vx]{\beta_\eps}
                 + \sum_{\wx \in \Vx} \abs{f(\wx)}
               \norm[\Gamma_\vxeps \times \Gamma_\wxeps] {\gamma_\eps}
               \norm[\Gamma_\vx] \1 \Bigr)
         \norm[\Gamma_\vx]{u - \avint_\vx u}
  \end{multline}
  since the derivative vanishes as $\Jup_1 f$ is constant on $X_\vx$,
  and where we  replaced $u$ by $\avint_\vx u + (u - \avint_\vx u)$ in the
  first two integrals.
  The first sum of the last estimate can be estimated by
  \begin{equation*}
    \eps^{m/2} \bigabs{\iprod[\Vx] {\ul f}
           {(\wt \gamma_\eps - \gamma) \ul u}}
    \le \norm[\Lin{\lsqr \Vx}]{\wt \gamma_\eps - \gamma} 
        \norm[\Vx] {\ul f} (\eps^{m/2} \norm[\Vx]{\ul u}),
  \end{equation*}
  where $\ul f= (f(\vx))_{\vx \in \Vx}$ and $\ul u = (\avint_\vx
  u)_{\vx \in \Vx}$.  Now,
  \begin{multline*}
    \eps^m\normsqr[\Vx] {\ul u}
    \le \eps^m\sum_{\vx \in \Vx} \frac {\deg \vx}{\volume_{m+1}{X_\vx}}
         \normsqr[X_\vx] u\\
    \le \cvol \Vx' \cdot
      \Bigl(\eps \max_\vx C_\vx
          \normsqr[X_\Vxeps] {\de u}
          + \frac {16 \max_\vx \cvol \vx}{\ell_0} 
             \bigl(\normsqr[X_\Edeps]{u'} + \normsqr[X_\Edeps] u
             \bigr)
       \Bigr)
  \end{multline*}
  by \Lem{vx.est}, where $\cvol \Vx' := \max_\vx (\deg
  \vx)/(\volume_{m+1} X_\vx)$.  In particular, the first sum equals
  $\eps^{m/2} \bigabs{\iprod[\Vx] {\ul f} {(\wt \gamma_\eps - \gamma)
      \ul u}}$ and can be estimated from above by
  \begin{equation*}
    \eps^{1/2} C'_{\beta,\gamma,\Vx}
       \biggl(
         \frac{8 \cvol \Vx' }{\ell_0}
            \max_\vx \Bigl\{ C_\vx, \;
                    \frac{16 \cvol \vx}{\ell_0}
                 \Bigr\}
       \biggr)^{1/2}
     \norm[\Sob{X_0}] f  \norm[\Sob{X_\eps}] u
  \end{equation*}
  by~\eqref{eq:eval.vx} and since $\eps \le 1$.  The second summand of
  the right hand side of~\eqref{eq:vx.contr} can be estimated by
  \begin{multline*}
    \eps^{m/2}
        \bigl(
           \norm[\infty] {\beta_\eps}
           + \norm[\Gamma_\Vxeps \times \Gamma_\Vxeps] {\gamma_\eps}
        \bigr)
        \norm[\Vx]{\ul f}
       \Bigl( \sum_{\vx \in \Vx} 
         \frac{\volume \Gamma_\vx}{\deg \vx} 
            \normsqr[\Gamma_\vx]{u - \avint_\vx u}
       \Bigr)^{1/2}\\
    \le \eps^{1/2}
        C_{\beta,\gamma,\Vx}
       \max_{\vx,\edge} \biggl(
          \frac{8 \cvol \vx'' \ctr {\Gamma_\vx, X_\vx}} {\ell_\edge}
          \Bigl(
            \frac 1 {\lambda_2^\Neu(X_\vx)} + 1
          \Bigr)
       \biggr)^{1/2}
     \norm[\Sob{X_0}] f  \norm[X_\Vxeps] {\de u}
  \end{multline*}
  using \eqref{eq:ass.beta.gamma.eps}, \eqref{eq:eval.vx} and
  \Lem{sob.diff2}, where $\cvol \vx'' := (\volume_m \Gamma_\vx)/(\deg
  \vx)$.
\end{proof}

Let us now formulate the main theorem of this section. 
\begin{theorem}
  \label{thm:conv.mfd}
  Assume that~\eqref{eq:ass.gamma}--\eqref{eq:ass.beta.gamma.eps}
  and~\eqref{eq:ass.beta.gamma.eps'} are fulfilled.  Then the
  sesquilinear forms $(a_\eps)_{\eps \in [0,\eps_0]}$ form an
  equi-elliptic family for some $\eps_0 > 0$.  Moreover, $a_\eps$ is
  $\delta_\eps$-$\kappa$-quasi-unitarily equivalent to $a_0$ for
  $\delta_\eps = \Err(\eps^{1/2})$ and $\kappa=1$.

  In particular, the convergence results of \Sec{general} apply, e.g.,
  the spectra $\spec {A_\eps}$ of the associated operators $A_\eps$
  converges to the spectrum $\spec{A_0}$ of $A_0$ in the sense of
  \Def{spec_conv}.
\end{theorem}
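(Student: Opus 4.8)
The plan is to assemble the statement from the results already proved in this section together with the abstract theory of Section~\ref{sec:abstract}; essentially no new computation is needed, only bookkeeping, since all the analytic work is in Propositions~\ref{prp:ell.eps} and~\ref{prp:a5} and in the estimates imported from~\cite{post:06}.

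First I would establish the equi-ellipticity. Proposition~\ref{prp:ell.0} gives, for any fixed $\alpha\in(0,1)$, a vertex $\omega_0\ge 0$ such that $a_0$ is $\HS_0$-elliptic with embedding constant $c_V=1$, and $a_0$ is clearly bounded (again by~\eqref{eq:eval.vx}); Proposition~\ref{prp:ell.eps} gives the corresponding statement for $a_\eps$ with the \emph{same} $\alpha$, a (possibly larger) vertex $\omega\ge 0$, $c_V=1$, and a uniform bound $M$, valid for all $\eps\in(0,\eps_0]$. Here $\eps_0$ is chosen small enough that Proposition~\ref{prp:ell.eps} applies and that the quantitative hypotheses~\eqref{eq:ass.beta.gamma.eps} and~\eqref{eq:ass.beta.gamma.eps'} are in force. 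Replacing $\omega$ by $\max\{\omega,\omega_0\}$, we obtain a single quadruple $(M,\omega,\alpha,c_V)$ valid for every $\eps\in[0,\eps_0]$, so $(a_\eps)_{\eps\in[0,\eps_0]}$ is equi-elliptic in the sense of Definition~\ref{def:equi_sec}.

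Next I would verify the five conditions of Definition~\ref{def:quasi-uni} for the identification operators $\Jup,\Jdown,\Jup_1,\Jdown_1$ fixed in Section~\ref{sec:id.ops}. Condition~\eqref{A4} holds with $\kappa=1$, since $\norm[\HS_\eps]{\Jup f}\le\norm[\HS_0]{f}$ (noted after~\eqref{eq:j.up}) and $\Jdown=(\Jup)^*$; the same identity makes~\eqref{A2} hold with $\delta_\eps=0$. Conditions~\eqref{A1} and~\eqref{A3} involve only the Hilbert spaces $\HS_\eps$, $\HSone_\eps$ — which carry the plain $H^1$-norm, hence the same data as for Neumann boundary conditions — together with the operators $\Jup_1,\Jdown_1$, which coincide with those used in~\cite{post:06}; the estimates obtained there therefore yield~\eqref{A1} and~\eqref{A3} with $\delta_\eps=\Err(\eps^{1/2})$ (the longitudinal rescaling of the edge variable contributes an error of order $\eps$, the transversal averaging one of order $\eps^{1/2}$). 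Finally,~\eqref{A5} is precisely Proposition~\ref{prp:a5}, again with $\delta_\eps=\Err(\eps^{1/2})$. Taking $\delta_\eps$ to be the sum of these individual error terms and shrinking $\eps_0$ once more if necessary, we conclude that $a_\eps$ is $\delta_\eps$-$\kappa$-quasi-unitarily equivalent to $a_0$ for $\kappa=1$ and $\delta_\eps=\Err(\eps^{1/2})\to 0$, i.e.\ $(a_\eps)_{\eps>0}$ converges to $a_0$ in the sense of Definition~\ref{def:quasi-uni}.

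It remains to deduce the consequences. Since $\HSone_0=\Sob{X_0}$ is compactly embedded into $\HS_0=\Lsqr{X_0}$ (the graph $X_0$ being compact), $A_0$ has compact resolvent, so $\spec{A_0}$ is discrete and $\res{A_0}$ is connected; hence Corollary~\ref{cor:conn_spec} gives the stated convergence $\spec{A_\eps}\to\spec{A_0}$ in the sense of Definition~\ref{def:spec_conv}, and Theorems~\ref{thm:hol.calc2} and~\ref{thm:eigenvalue_convergence}, as well as the extrapolation results of Section~\ref{sec:extra}, apply as well. The one genuinely laborious ingredient of the whole argument is Proposition~\ref{prp:a5}: there one splits the form difference $a_\eps(\Jup_1 f,u)-a_0(f,\Jdown_1 u)$ into a vertex and an edge contribution and controls each by the Sobolev trace and averaging estimates of Section~\ref{sec:est.mfd}, and it is precisely the scalings $\beta_\Edeps=\Err(\eps^{3/2})$ and $\wt\gamma_\eps\to\gamma$ prescribed in~\eqref{eq:ass.beta.gamma.eps'} that keep this error of order $\eps^{1/2}$ rather than divergent, cf.\ Remark~\ref{rem:scaling}. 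Everything else in the proof of the theorem is routine.
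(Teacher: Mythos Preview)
Your proposal is correct and follows essentially the same approach as the paper's proof, which is itself only a few lines: the paper notes that~\eqref{A5} is Proposition~\ref{prp:a5}, that the remaining conditions~\eqref{A1}--\eqref{A4} were established in~\cite{post:06} (and~\cite{exner-post:09}), and that the spectra are discrete since the underlying spaces are compact. Your write-up is a more explicit and detailed version of the same argument, with the equi-ellipticity step (via Propositions~\ref{prp:ell.0} and~\ref{prp:ell.eps}) spelled out rather than taken as already established.
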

\begin{proof}
  Condition~\eqref{A5} has been shown in \Prp{a5}.  The other
  conditions have already been shown in~\cite{post:06} or
  \cite[Prp.~3.2]{exner-post:09}.  Note that the spectrum of $A_\eps$
  and $A_0$ is purely discrete, since the underlying spaces are
  compact.
\end{proof}

\begin{remark}
  \label{rem:non-compact}
  If the graph $X_0$ and the corresponding manifold $X_\eps$ are not
  compact, the corresponding forms $a_0$ and $a_\eps$ are still
  (equi-)sectorial and fulfil \Def{quasi-uni} provided we have a
  uniform control of the geometry of the graph and the manifold
  building blocks (see the constants in the proofs).  For example, we
  need a positive lower bound on the edge length, i.e., $\inf_\edge
  \ell_\edge>0$ and a uniform finite upper bound on the Sobolev trace
  constants like $\sup_\vx \ctr{\Gamma_\vx,X_\vx}<\infty$.  The
  uniform control of the geometry is in particular fulfilled if there
  is a finite set of of manifolds $\mc M$ such that the building
  blocks $X_\vx$ and $Y_\edge$ of the manifold $X$, constructed according
  to the graph $(\Vx,\Ed,\bd)$, are isometric to a member in $\mc M$.
  Coverings of compact spaces provide such examples.


\end{remark}

\begin{remark}
  \label{rem:pos.contr.mfd}
  Under suitable conditions on the coefficients we can apply
  \Thm{Lpconv} in the context of the approximation results of this
  section.  More precisely, assume e.g.\ that $\beta_\eps \ge 0$ and
  $\gamma_\eps = 0$. Then it can easily be verified that the forms
  $a_\eps$ satisfy the Beurling-Deny conditions for all $\eps\ge 0$.
  Thus the associated semigroups are positive and
  $L^\infty$-contractive. Thus for $\phi(z)=\e^{-tz}$ the assumptions
  of \Thm{Lpconv} are satisfied with $c_\eps = \eps^{m/2}$. Hence
  \begin{equation*}
      \norm[\Lin{L^p(X_\eps)}]{\Jup \e^{-tA_0} \Jdown -  \e^{tA_\eps}}
      \to 0
  \end{equation*}
  as $\eps \to 0$.
\end{remark}

\textit{Bibliography.}


\newcommand{\etalchar}[1]{$^{#1}$}
\providecommand{\bysame}{\leavevmode\hbox to3em{\hrulefill}\thinspace}
\def\cprime{$'$}

\end{document}